\documentclass[12pt]{amsart}

\usepackage{amsopn}
\usepackage{amssymb}
\usepackage{amscd}
\usepackage[all]{xy}

\newtheorem{theorem}{Theorem}[section]
\newtheorem{lemma}[theorem]{Lemma}
\newtheorem{proposition}[theorem]{Proposition}
\newtheorem{corollary}[theorem]{Corollary}
\newtheorem*{Theorem}{Theorem}
\newtheorem*{Proposition}{Proposition}
\newtheorem*{Corollary}{Corollary}
\newtheorem*{Notation}{Notation}

\theoremstyle{definition}
\newtheorem{definition}[theorem]{Definition}
\newtheorem{example}[theorem]{Example}

\theoremstyle{remark}
\newtheorem{remark}[theorem]{Remark}

\newtheorem{question}[theorem]{Question}
\newtheorem*{Assertion}{Assertion}

\numberwithin{equation}{section}

\begin{document}

\title [Properness, Cauchy-indivisibility and the Weil completion]{Properness, Cauchy-indivisibility and the Weil completion of a group of isometries}

\author[A. Manoussos]{A. Manoussos}
\address{Fakult\"{a}t f\"{u}r Mathematik, SFB 701, Universit\"{a}t Bielefeld, Postfach 100131, D-33501 Bielefeld, Germany}
\email{amanouss@math.uni-bielefeld.de}
\thanks{During this research the first author was fully supported by SFB 701 ``Spektrale Strukturen und
Topologische Methoden in der Mathematik" at the University of Bielefeld, Germany. He is grateful for its generosity and hospitality. }

\author[P. Strantzalos]{P. Strantzalos}
\address{Department of Mathematics, University of Athens, Panepistimioupolis, GR-157 84, Athens, Greece}
\email{pstrantz@math.uoa.gr}
\thanks{}

\subjclass[2010]{Primary 37B05, 54H20; Secondary 54H15}

\keywords{Proper action, Weil completion, Cauchy-indivisibility, Borel section, fundamental set.}

\begin{abstract}
In this paper we introduce a new class of metric actions on separable (not necessarily connected) metric spaces called ``Cauchy-indivisible" actions. This new class
coincides with that of proper actions on locally compact metric spaces and, as examples show, it may be different in general. The concept of ``Cauchy-indivisibility"
follows a more general research direction proposal in which we investigate the impact of basic notions in substantial results, like the impact of local compactness
and connectivity in the theory of proper transformation groups. In order to provide some basic theory for this new class of actions we embed a ``Cauchy-indivisible"
action of a group of isometries of a separable metric space in a proper action of a semigroup in the completion of the underlying space. We show that, in case this
subgroup is a group, the original group has a ``Weil completion" and vice versa. Finally, in order to establish further connections between ``Cauchy-indivisible"
actions on separable metric spaces and proper actions on locally compact metric spaces we investigate the relation between ``Borel sections" for ``Cauchy-indivisible"
actions and ``fundamental sets" for proper actions. Some open questions are added.
\end{abstract}

\maketitle

\section{Introduction}
We begin with a brief summary of a research proposal, that will be published in more details elsewhere.

The History of Mathematics indicates that during the $19^{th}$ century the mathematical creativity lived a quantitative and -mainly- a qualitative explosion, which is
initiated and developed by a critical renovation of notions and procedures that have being initiated the foregoing centuries. Our period can be characterized as one
where we understand deeper and broader the achievements of the $19^{th}$ century, as it was the case a few decades before the beginning of the $19^{th}$ century when
the fundamental notions began to be clarified. These remarks led to a research direction which -roughly speaking- amounts to the following general requirement:
Walking towards the origins of the $19^{th}$ century, try to critically understand the impact of functionally crucial assumptions in substantial results, preferably
within composed frames, therefore frames which work broadly in Mathematics and their applications. Such a frame is, certainly, that of Topological Transformation
Groups with applications, for instance, in Topology, Geometry and Physics.

The above research direction led to \cite{MS2}, where the impact of ``property Z" (: every compact subset of the underlying space is contained in a compact and
connected one) in the main result of \cite{abels1} is investigated. The main result of \cite{MS2} is that, in almost all cases, under the restrictive ``property Z"
was hidden that, without assuming this property, there always exists a maximal zero-dimensional compactification of the underlying space which does the same job as in
\cite{abels1}, that may be different from the (absolutely maximal) end-point compactification considered in \cite{abels1}, and coincides with it, among others, if
``property Z" is satisfied.

In the paper at hand, within the same research direction and having in mind the fruitful theory of proper transformation groups on locally compact and connected
spaces, we propose an analogous class of actions, not necessarily proper, without assuming local compactness and connectedness of the underlying spaces. So, we
introduce a new, rather natural, class of metric actions on separable (not necessarily connected) metric spaces called ``Cauchy-indivisible". Note that the isometric
actions constitute nowadays an important part of the theory of proper actions and that the group of isometries of a locally compact and connected metric space acts
properly on it.

As the following definition shows, ``Cauchy-indivisible" actions are characterized by an ``\textit{isotropic}" behavior of divergent nets of the acting group with
respect to the basic metric notion of a ``Cauchy sequence". Recall that ``$z_{i}\to\infty$ in $Z$" means that the net $\{ z_{i}\}$ does not have any convergent subnet
in the space $Z$.

\begin{definition} \label{def21}
Let $(G,X)$ be a continuous action of a topological group $G$ on a metric space $X$. The action is said to be \textit{Cauchy-indivisible} if the following holds: If
$\{g_{i}\}$ is a net in $G$ such that $g_{i}\to\infty$ in $G$ and $\{ g_{i}x\}$ is a Cauchy net in $X$ for some $x\in X$ then $\{ g_{i}x\}$ is a Cauchy net for every
$x\in X$.
\end{definition}

It turns out that a Cauchy-indivisible action on a locally compact or complete metric space is proper and vice versa (cf. $\S3$), and that in general the two notions
may differ (cf. $\S4$). In both cases the underlying space is not assumed to be connected. The omission of this assumption in the locally compact case, as well the
omission of the local compactness in the main part of the paper at hand is an advantage coming from the fact that Cauchy-indivisibility essentially reflects the
\textit{global} behavior of selfmaps of $X$ comparing to \textit{local} properties or the \textit{connectedness} of the underlying space. So we can generalize the
framework of proper actions and go beyond, in accordance to our research direction proposal, provided that this new framework leads (a) to interesting results in the
non locally compact case and (b) enables a better understanding of proper actions on locally compact spaces.

Concerning requirement (b) we note that in Theorem \ref{th33} we give an answer to the open question of characterizing proper actions on non-connected locally compact
metric spaces and in Theorem \ref{sectfund} we establish an interconnection between Borel sections (which occur in Cauchy-indivisible actions on separable spaces, cf.
Proposition \ref{prop71}) and fundamental sets that characterize proper isometric actions. Recall that a \textit{section} of an action $(G,X)$ is a subset of $X$
which contains only one point from each orbit. A \textit {Borel section} is a section that is Borel subset of $X$ (e.g. useful in measure theory).

\begin{Theorem}[Theorem \ref{sectfund}]
Let $G$ be a locally compact group which acts properly on a locally compact space $X$, and suppose that the orbit space $G\backslash X$ is paracompact. Let $S$ be a
section for the action $(G,X)$. Then
\begin{itemize}
\item[(i)] For every open neighborhood $U$ of $S$ we can construct a closed fundamental set $F_c$ and an open fundamental set $F_o$ such that $F_c\subset F_o\subset U$.
\item[(ii)] If, in addition, $(X,d)$ is a separable metric space, in which case the action $(G,X)$ is Cauchy-indivisible, then there exists a Borel section $S_B$, which
is also a fundamental set, such that $S_B\subset F_c\subset F_o\subset U$.
\end{itemize}
\end{Theorem}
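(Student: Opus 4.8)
The plan is to descend to the orbit space $Q:=G\backslash X$ through the quotient map $\pi\colon X\to Q$---which for a proper action of a locally compact group is open with $Q$ locally compact Hausdorff---and to build the fundamental sets by patching local pieces furnished by the slice theorem. Since $S$ is a section, $\pi|_S$ is a bijection onto $Q$ and $\pi(U)=Q$, so the whole task is to thicken $S$ inside $U$ in a globally consistent way. For each orbit I would invoke the slice theorem for proper actions: around the section point $s\in S$ on that orbit there is a slice $\Sigma_s\ni s$ with compact stabiliser $H_s$ and an equivariant homeomorphism $G\times_{H_s}\Sigma_s\cong G\Sigma_s$ onto an open tube; shrinking, I may assume $\Sigma_s\subset U$. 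A bare slice is not open in $X$, so to obtain an \emph{open} local piece I thicken in the group direction: I choose a relatively compact open neighbourhood $N_s$ of $e$ in $G$ with $N_s\Sigma_s\subset U$ (possible since the action is continuous and $U$ is open) and set $O_s:=N_s\,\Sigma_s^{\circ}$. Then $O_s$ is open, $O_s\subset U$, it meets every orbit through the tube, and $\{g\in G: gO_s\cap O_s\neq\emptyset\}$ is relatively compact, being controlled by $N_sN_s^{-1}$ and the compact $H_s$. The images $\pi(O_s)$ form an open cover of $Q$.

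Next I would patch these pieces using paracompactness of $Q$. Passing to a locally finite open refinement $\{V_\alpha\}$ of $\{\pi(O_s)\}$ together with a shrinking $\{V'_\alpha\}$, $\overline{V'_\alpha}\subset V_\alpha$, that still covers $Q$, and assigning to each $\alpha$ a piece $O_\alpha$ as above with $\pi(O_\alpha)\supset V_\alpha$, I set $F_o:=\bigcup_\alpha\bigl(O_\alpha\cap\pi^{-1}(V_\alpha)\bigr)$. This is open, contained in $U$, and satisfies $GF_o=X$ because the $V_\alpha$ cover $Q$. The essential point is the fundamental-set property: for $x\in X$ local finiteness leaves only finitely many indices relevant near $x$, so for a small neighbourhood $W$ of $x$ the set $\{g: gW\cap F_o\neq\emptyset\}$ is a finite union of relatively compact sets, hence relatively compact. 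For the closed set I shrink each piece to $C_\alpha:=\overline{N'_\alpha}\,\Sigma'_\alpha$ with $\overline{N'_\alpha}\subset N_\alpha$ and $\Sigma'_\alpha\subset\Sigma_\alpha^{\circ}$, so that $C_\alpha\subset O_\alpha$; then $F_c:=\bigcup_\alpha\bigl(C_\alpha\cap\pi^{-1}(\overline{V'_\alpha})\bigr)$ is closed by local finiteness, is still covering and fundamental (the $V'_\alpha$ cover $Q$), and by construction $F_c\subset F_o\subset U$.

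For part (ii), $(X,d)$ is now separable metric, so by $\S3$ the proper action is Cauchy-indivisible and Borel sections exist (Proposition \ref{prop71}). The closed fundamental set $F_c$ meets each orbit in a nonempty relatively compact set, and the orbit equivalence relation is Borel with closed orbits; a Borel uniformisation theorem of Kuratowski--Ryll-Nardzewski type then selects exactly one point of each orbit inside $F_c$, producing a Borel section $S_B\subset F_c$. That $S_B$ is itself a fundamental set is automatic: $GS_B=X$ since $S_B$ is a section, and from $S_B\subset F_c$ one gets $\{g: gW\cap S_B\neq\emptyset\}\subset\{g: gW\cap F_c\neq\emptyset\}$, so relative compactness passes from $F_c$ to $S_B$. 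Hence $S_B\subset F_c\subset F_o\subset U$.

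The heart of part (i) is the patching: one must keep the global set open (respectively closed), contained in $U$, covering every orbit, and---most delicately---fundamental, i.e.\ with relatively compact return sets. Local finiteness of the refinement is precisely what prevents the return sets from accumulating and is the linchpin of the argument, while producing the open piece inside $U$ (in place of the non-open bare slice) is the other point needing care. In part (ii) the only genuine work is the Borel selection from $F_c$, where separability---hence a standard-Borel, Polish-type setting---is what makes the selection theorem applicable.
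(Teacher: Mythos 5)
Your overall architecture --- pass to the paracompact orbit space, take a locally finite open refinement together with a shrinking, patch local pieces into $F_o$ and $F_c$, then run a Borel selection theorem inside the closed fundamental set $F_c$ --- is exactly the paper's, which follows Koszul's classical construction for part (i) and the Kallman--Mauldin cross-section lemma for part (ii). The genuine gap is in how you manufacture the local pieces. You invoke ``the slice theorem for proper actions'' to obtain, around each section point, a slice $\Sigma_s$ with compact stabiliser $H_s$ and an equivariant tube $G\times_{H_s}\Sigma_s\cong G\Sigma_s$. But the theorem only assumes that $G$ is a locally compact group; the exact slice theorem in this form is Palais's theorem and requires $G$ to be a \emph{Lie} group. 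For a general locally compact group acting properly on a locally compact space exact slices need not exist (only approximate-slice results are available), so the very first step of your construction is not justified in the stated generality.

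The slice machinery is also unnecessary: all that the patching argument actually uses is that for a proper action the transporter $G_{KL}=\{g\in G\,:\,gK\cap L\neq\emptyset\}$ has compact closure whenever $K$ and $L$ are relatively compact. The paper's local pieces are simply relatively compact open subsets $U_{i_j}$ of the open sets making up $U$, chosen (following Koszul) together with compact $K_{i_j}\subset U_{i_j}$ so that $p(U_{i_j})=W_j$ and $p(K_{i_j})=\overline{V_j}$; local finiteness of $\{W_j\}$ then yields both the closedness of $F_c=\bigcup_j K_{i_j}$ and the compactness of the return sets, with no reference to stabilisers or tubes. If you replace your slice-built $O_s$ by such relatively compact open neighbourhoods of the section points --- note that as written $O_\alpha=N_\alpha\Sigma_\alpha^{\circ}$ need not be relatively compact, which your ``finite union of relatively compact transporters'' step silently requires, and the closedness of $C_\alpha=\overline{N'_\alpha}\,\Sigma'_\alpha$ and the surjectivity of $\pi$ from $\Sigma'_\alpha$ onto $V'_\alpha$ are also left unverified --- the rest of your part (i) goes through. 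Your part (ii) is essentially the paper's; the hypotheses to check for the selection lemma are that each class $Gx\cap F_c$ is complete (it is closed in $X$ by properness of the action) and that the saturation in $F_c$ of a relatively open set is Borel, and the fact that $S_B\subset F_c$ inherits the fundamental-set property is exactly as you say.
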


Note that $S_B$ in (ii) of the above theorem, is a ``minimal" fundamental set, because of its construction, and as such may lead to applications.

The new notion ``like properness" seams to be suitable for structure theorems, as our first results indicate. Concerning requirement (a) above, in $\S 5$, which is
the main part of the paper at hand we consider a separable metric space $(X,d)$ such that the natural evaluation action of the group of isometries $Iso(X)$ on $X$ is
Cauchy-indivisible. Let $\widehat{X}$ denote the completion of $(X,d)$ and let $E$ be the \textit{Ellis semigroup} of the lifted group $\widehat{Iso(X)}$ in
$C(\widehat{X},\widehat{X})$, i.e. the pointwise closure of $\widehat{Iso(X)}$ in $C(\widehat{X},\widehat{X})$. Let

\begin{align*}
& \hspace{-5mm} H =\{ h\in C(\widehat{X},\widehat{X})\,|\,\mbox{there exists a sequence}\,\,\{ g_{n}\}\subset Iso(X)\\
& \hspace{11mm}\mbox{with}\,\, g_{n}\to\infty\,\,\mbox{in}\,\, Iso(X)\,\,\mbox{and}\,\, \widehat{g_{n}}\to h \,\,\mbox{in}\,\, C(\widehat{X},\widehat{X}) \},\\
& \hspace{-5mm} X_l=\{ hx\,|\, h\in H,\, x\in X\},\\
&  \hspace{-5mm} X_p=\{ hx\,|\, h\in H\cap Iso(\widehat{X}),\, x\in X \}.
\end{align*}

With this notation and the previous mentioned assumptions, among other results we show that

\begin{Theorem}[Theorem \ref{th512}]
The set $X\cup X_{p}$ is the maximal subset of $X\cup X_{l}$ that contains $X$ such that the map
\[
\omega :E\times (X\cup X_{p})\to (X\cup X_{p})\times \widehat{X}
\]
with $\omega (f,y)=(y,fy)$, $f\in E$ and $y\in X\cup X_{p}$ is proper.
\end{Theorem}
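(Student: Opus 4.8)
The plan is to reduce everything to a sequential criterion and then to isolate one ``propagation'' lemma powered by Cauchy-indivisibility. First I would record two structural facts. Every $f\in E$ is distance preserving: if $\widehat{g_i}\to f$ pointwise then $d(fx,fy)=\lim d(\widehat{g_i}x,\widehat{g_i}y)=d(x,y)$, so $f$ is an isometric embedding of $\widehat{X}$ and the family $E$ is equicontinuous. Since $X$, hence $\widehat{X}$, is separable, on the equicontinuous family $E$ the topology of pointwise convergence is metrizable (it is determined by the values on a fixed countable dense set $\{x_k\}\subset X$), and so are $E\times(X\cup X_p)$ and the target $(X\cup X_p)\times\widehat{X}$. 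Consequently $\omega$ is proper if and only if it is sequentially proper, and since $d(f_iy_i,f_iy)=d(y_i,y)\to 0$ the whole matter comes down to the following \emph{Key Lemma}: if $y\in X\cup X_p$, $f_i\in E$ and $f_iy\to z\in\widehat{X}$, then $\{f_i\}$ has a subsequence converging in $E$.

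For the Key Lemma I would first treat $y\in X$. As $\widehat{Iso(X)}$ is dense in $E$, choose $g_i\in Iso(X)$ with $\widehat{g_i}$ so close to $f_i$ that $d(\widehat{g_i}x_k,f_ix_k)\to 0$ for every $k$; then $g_iy=\widehat{g_i}y\to z$, so $\{g_iy\}$ is Cauchy. Now dichotomize: if $\{g_i\}$ has a subsequence converging to some $g\in Iso(X)$ then $\widehat{g_{i_j}}\to\widehat{g}$ pointwise and hence $f_{i_j}\to\widehat{g}$; if instead $g_i\to\infty$ in $Iso(X)$, then Cauchy-indivisibility (Definition \ref{def21}), applied to the Cauchy sequence $\{g_iy\}$, shows that $\{g_ix\}$ is Cauchy, hence convergent in the complete space $\widehat{X}$, for every $x\in X$; thus $\widehat{g_i}$ converges pointwise on the dense set $X$, and by equicontinuity on all of $\widehat{X}$, to an isometric embedding $f\in E$, whence $f_i\to f$. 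For $y\in X_p$ write $y=h_0x_0$ with $h_0\in H\cap Iso(\widehat{X})$ and $x_0\in X$; since $E$ is a semigroup, $\tilde f_i:=f_i\circ h_0\in E$ and $\tilde f_ix_0=f_iy\to z$ with $x_0\in X$, so the case just proved yields a subsequence $\tilde f_{i_j}\to\tilde f\in E$. As $h_0$ is a surjective isometry, each $u\in\widehat{X}$ is $h_0w$ for a unique $w$, and $f_{i_j}(u)=\tilde f_{i_j}(w)\to\tilde f(w)$, so $f_{i_j}$ converges pointwise to a map $f\in E$ (here $E$ is pointwise closed). This proves the Key Lemma and with it the properness of $\omega$ on $E\times(X\cup X_p)$.

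Next I would prove maximality by showing that any $y_0\in X_l\setminus(X\cup X_p)$ destroys properness. Write $y_0=hx_0$ with $h\in H$, say $\widehat{g_n}\to h$ pointwise and $g_n\to\infty$, and consider $f_n:=\widehat{g_n}^{\,-1}\in\widehat{Iso(X)}\subset E$ with the constant point $y_0$. Then $d(f_ny_0,x_0)=d(\widehat{g_n}^{\,-1}hx_0,x_0)=d(hx_0,\widehat{g_n}x_0)\to 0$, so $\omega(f_n,y_0)=(y_0,f_ny_0)\to(y_0,x_0)$ and the set $\{(y_0,x_0)\}\cup\{(y_0,f_ny_0)\}$ is compact. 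If $\omega$ were proper on $E\times Y$ for some $Y\ni y_0$, a subsequence $f_{n_j}$ would converge in $E$ to some $f$; passing to the limit in $d(\widehat{g_{n_j}}^{\,-1}hx,x)=d(hx,\widehat{g_{n_j}}x)\to 0$ gives $f\circ h=\mathrm{id}$ on the dense set $X$, hence on $\widehat{X}$. Thus $f$ is onto, so $f\in Iso(\widehat{X})$, and since $g_n\to\infty$ forces $g_n^{-1}\to\infty$ (inversion is a homeomorphism of $Iso(X)$) we get $f\in H\cap Iso(\widehat{X})$. But then $h=f^{-1}\in Iso(\widehat{X})$, so $h\in H\cap Iso(\widehat{X})$ and $y_0=hx_0\in X_p$, contradicting $y_0\notin X_p$. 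Hence no $Y$ meeting $X_l\setminus(X\cup X_p)$ makes $\omega$ proper, and $X\cup X_p$ is the maximal such set.

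The delicate point, and the conceptual heart, is the alternative between surjective and non-surjective limit maps. In the positive direction all the weight rests on the Key Lemma, i.e.\ on converting relative compactness of the single sequence $\{f_iy\}$ into relative compactness of $\{f_i\}$; this is exactly what Cauchy-indivisibility supplies, but only after general elements of $E$ are approximated by genuine isometries $\widehat{g_i}$ and only because $X$ is dense and $E$ is equicontinuous. The matching obstruction runs the same mechanism in reverse: a surjective $h_0$ can be cancelled inside $E$, reducing the $X_p$ case to the $X$ case, whereas a non-surjective $h$ has no inverse in $E$, and the sequence $\widehat{g_n}^{\,-1}$ detects precisely this failure. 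I expect the main technical care to lie in the metrizability/equicontinuity bookkeeping and in checking that the limit maps produced actually lie in $E$ (using that $E$ is pointwise closed and a semigroup); the surjective/non-surjective algebra is then what pins $X_p$ down as the exact maximal set.
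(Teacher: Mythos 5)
Your argument is correct, and for the most part it shadows the paper's proof: the reduction to the sequential ``limit sets are empty'' criterion via the metric $\delta$ of Definition \ref{def52} and Proposition \ref{prop53}, the treatment of $y\in X$ by approximating $f_i\in E$ with genuine lifted isometries $\widehat{g_i}$ and then running the dichotomy (convergent subsequence in $Iso(X)$ versus $g_i\to\infty$ plus Cauchy-indivisibility and Corollary \ref{cor56}), and the maximality step via $f_n=\widehat{g_n}^{-1}$ are all exactly the paper's moves, with your surjectivity bookkeeping ($f\circ h=\mathrm{id}$ with $f$ bijective forces $h=f^{-1}$) slightly more explicit than the paper's terse ``this makes $h$ a surjection.'' Where you genuinely diverge is the case $y\in X_p$. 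The paper does not cancel $h_0$ algebraically: it extracts, by a diagonal argument on the double limit $\widehat{d}(f_n[p_kx],[z_k])$, indices $n_i,k_i$ making $\{f_{n_i}p_{k_i}x\}$ Cauchy, obtains a pointwise limit $h_2$ of $\widehat{f_{n_i}p_{k_i}}$, and only then shows $\widehat{f_{n_i}}\to h_2h_1^{-1}$ by a triangle-inequality estimate. You instead pass to $\tilde f_i:=f_i\circ h_0\in E$ (legitimate, since $E$ is a semigroup by Proposition \ref{prop58}(iii)), invoke the already-proved $y\in X$ case at the point $x_0$, and recover convergence of $f_{i_j}=\tilde f_{i_j}\circ h_0^{-1}$ by the set-theoretic change of variable $u=h_0w$, using that $h_0$ is a surjective isometry and that $E$ is pointwise closed so the limit stays in $E$. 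This is a cleaner and shorter route that isolates the real content (only $h_0^{-1}$ as a map, never as an element of $E$, is needed), at the price of leaning on the semigroup and closedness properties of $E$; the paper's diagonal extraction is more self-contained but considerably more delicate. Your uniform approximation of $f_i$ by $\widehat{g_i}$ also merges the paper's two subcases ($\{f_n\}\subset\widehat{Iso(X)}$ versus $\{f_n\}\subset H$) into one. The only places where care is needed, and which you correctly flag as bookkeeping, are the three-epsilon estimate showing $\widehat{g_i}y\to z$ from closeness on the dense set $\{x_k\}$ (the paper handles this by the tailored choice $\delta(\widehat{g_n},f_n)<\tfrac{1}{i_n2^{i_n}}$ along a sequence $x_{i_n}\to y$), and the fact that a pointwise limit of isometric embeddings of $\widehat{X}$ is again one, so it lies in $C(\widehat{X},\widehat{X})$ and hence in the closed set $E$.
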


The interest on this theorem lies on the fact that an action $(G,X)$ is proper if the map $G\times X\to X \times X$ defined by $(g,x)\mapsto (x,gx)$ is proper, cf.
\cite[Definition 1, p.250]{bour1}.

We remind that a topological group has a \textit{Weil completion} with respect to the uniformity of pointwise convergence if it can be embedded densely in a complete
group with respect to its left uniform structure.

\begin{Proposition}[Proposition \ref{prop516}]
The following are equivalent:
\begin{enumerate}
\item[(i)] The map
\[
\omega :E\times (X\cup X_{l})\to (X\cup X_{l})\times \widehat{X}
\]
is proper.

\item[(ii)] $E$ is a group (precisely a closed subgroup of $Iso(\widehat{X})$).

\item[(iii)] $Iso(X)$ has a Weil completion.
\end{enumerate}
\end{Proposition}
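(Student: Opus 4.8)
The plan is to identify the Ellis semigroup $E$ with the completion of $Iso(X)$ with respect to its left uniformity and to read off all three conditions from this identification. First I would observe that for a group of isometries the left uniformity coincides with the uniformity of pointwise convergence (this is exactly the uniformity entering the Weil-completion property): a basic left entourage of the form $\{(g,h) : d(g^{-1}h\,x_j,x_j)<\varepsilon,\ j=1,\dots,n\}$ equals $\{(g,h) : d(g\,x_j,h\,x_j)<\varepsilon,\ j\}$ because the $g$ are isometries. Hence a left-Cauchy net $\{g_i\}$ is precisely a pointwise-Cauchy net, its coordinatewise limit $x\mapsto\lim_i g_ix$ is a total isometric map $X\to\widehat{X}$, and its canonical extension to $\widehat{X}$ is an element of $E$; conversely every element of $E$ arises this way. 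Thus $E$, as a uniform space, is the left-uniform completion of $Iso(X)$. Since $X$ (hence $\widehat{X}$) is separable, the pointwise topology on the equicontinuous family $E$ is metrizable, so $E=\widehat{Iso(X)}\cup H$ and nets may be replaced by sequences throughout. Here Cauchy-indivisibility is what guarantees that a sequence with $g_n\to\infty$ and $\{g_nx_0\}$ Cauchy for a single $x_0$ produces a total limit map, i.e.\ a genuine element of $H$, so that $H$ captures exactly the ``points at infinity'' of the completion.

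For (ii)$\Leftrightarrow$(iii) I would argue via uniqueness of the completion. Composition on $E$ extends the multiplication of $Iso(X)$, so $E$ is always a semigroup containing the identity. If $Iso(X)$ has a Weil completion $W$, then $W$ and $E$ are both the unique left-uniform completion of $Iso(X)$; transporting the group operations of $W$ along the uniform isomorphism $W\cong E$ and using that both multiplications extend composition on the dense subgroup $Iso(X)$, I obtain that $E$ is a group, which is (ii). Conversely, if $E$ is a group then each of its elements is a surjective isometry, so $E$ is a closed subgroup of $Iso(\widehat{X})$; and since $\widehat{X}$ is separable and complete, $Iso(\widehat{X})$ with the pointwise topology is a (Polish) topological group, so $E$ is a complete topological group in which $Iso(X)$ sits densely with its left uniformity, which is exactly a Weil completion, giving (iii).

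The equivalence (i)$\Leftrightarrow$(ii) is where the real work lies, and I would route it through Theorem \ref{th512}. By that theorem $X\cup X_p$ is the maximal subset of $X\cup X_l$ containing $X$ on which $\omega$ is proper, so (i) holds if and only if $X\cup X_l=X\cup X_p$, i.e.\ $X_l\subseteq X\cup X_p$. I would then show this is equivalent to $H\subseteq Iso(\widehat{X})$: the inclusion $H\subseteq Iso(\widehat{X})$ gives $X_l=X_p$ at once, while the converse is the delicate direction, where one must show that a single non-surjective $h\in H$, say with $p\in\widehat{X}\setminus h(\widehat{X})$, forces $\omega$ to fail properness at some point of $X_l\setminus(X\cup X_p)$. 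This is the step I expect to be the main obstacle, and I would extract it from the construction underlying Theorem \ref{th512} (the mechanism by which non-surjectivity of a limit map produces a sequence with no convergent subnet that nonetheless stays in a compact target, violating properness). Granting this, $H\subseteq Iso(\widehat{X})$ together with $\widehat{Iso(X)}\subseteq Iso(\widehat{X})$ and $E=\widehat{Iso(X)}\cup H$ yields $E\subseteq Iso(\widehat{X})$. Finally I would close the loop $E\subseteq Iso(\widehat{X})\Leftrightarrow E$ is a group: one implication is trivial (a group of isometric embeddings consists of surjections), and for the other I would use that, $\widehat{X}$ being separable and complete, inversion is continuous on $Iso(\widehat{X})$; hence for $f=\lim_n\widehat{g_n}\in E\subseteq Iso(\widehat{X})$ one has $f^{-1}=\lim_n\widehat{g_n}^{-1}=\lim_n\widehat{g_n^{-1}}\in E$, so $E$ is closed under inversion and is a group. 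Combining the three reductions gives (i)$\Leftrightarrow$(ii)$\Leftrightarrow$(iii).
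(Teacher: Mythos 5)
Your overall architecture is close to the paper's: the paper also derives (ii) $\Rightarrow$ (i) from Proposition \ref{prop510} ($E$ is a group iff $X_l=X_p$) together with Theorem \ref{th512}, and also proves (ii) $\Leftrightarrow$ (iii) by matching ``$E$ is a group'' against the criterion that inversion preserve left-Cauchy sequences. But the implication carrying the real content, (i) $\Rightarrow$ (ii), is exactly the one you leave unexecuted: you reduce it to ``$X_l\subseteq X\cup X_p$ implies $H\subseteq Iso(\widehat{X})$'', call that the main obstacle, and defer it to ``the construction underlying Theorem \ref{th512}''. That is a genuine gap, and your chosen intermediate statement is not the easiest way to close it: from $hx\in X_p$ you only obtain \emph{some} invertible $h_1\in H$ with $h_1x'=hx$ for some $x'\in X$, which does not obviously make $h$ itself surjective. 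The paper's argument is short, direct, and bypasses $X_p$ entirely: for $h\in H$ with $\widehat{g_n}\to h$ and $x\in X$, Lemma \ref{lem511} gives $hx\in X\cup X_l$, and since each $\widehat{g_n}$ preserves $\widehat{d}$ one has $\widehat{g_n^{-1}}(hx)\to x$; properness of $\omega$ at the point $hx$ then forces a subsequence of $\{\widehat{g_n^{-1}}\}$ to converge pointwise to some $f\in E$, which is a right inverse of $h$, so $h$ is surjective, lies in $Iso(\widehat{X})$, and is invertible in $E$. Alternatively, your own reduction closes if you drop the condition $H\subseteq Iso(\widehat{X})$ and invoke Proposition \ref{prop510} instead: maximality in Theorem \ref{th512} gives $X\cup X_l=X\cup X_p$, the inclusion $X_l\cap X\subseteq X_p$ is automatic (if $[g_kx]\in X$ then $\{g_n^{-1}[g_kx]\}$ converges in $X$, and Cauchy-indivisibility makes $\{g_n^{-1}x\}$ Cauchy), hence $X_l=X_p$ and $E$ is a group. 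Either repair works; as written, the central step is missing.

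A smaller point: in (ii) $\Rightarrow$ (iii) the inference ``$Iso(\widehat{X})$ is a Polish topological group, so $E$ is a complete topological group'' is not valid for the notion of completeness that the Weil completion requires. A Polish group need not be complete in its \emph{left} uniformity; $Iso(\mathbb{Z})$ in Section 6 of this paper is precisely such an example. What you actually need, and what your own first paragraph already supplies, is that for a group of isometries of $\widehat{X}$ the left uniformity coincides with the pointwise one, combined with Proposition \ref{prop58}(ii) asserting that $E$ is pointwise-complete; the Polishness of $Iso(\widehat{X})$ is a red herring.
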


\begin{Corollary}[Corollary \ref{cor518}]
If $E$ is a group the action $(Iso(X),X)$ is embedded densely in the proper action $(E,X\cup X_{l})$ such that the following equivariant diagram commutes
\[
\xymatrix{
(Iso(X),X)       \ar[d] \ar[r]   &X \ar[d] \\
(E,X\cup X_{l})  \ar[r]        &\widehat{X}}
\]
where $X\to X\cup X_{l}$ is the inclusion map and the map $Iso(X)\to E$ is defined by $g\mapsto \widehat{g}$ for every $g\in Iso(X)$. By the word ``densely" we mean
that $X$ is dense in $X\cup X_{l}$ and $\widehat{Iso(X)}$ is dense in $E$.
\end{Corollary}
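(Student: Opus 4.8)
The plan is to read off the Corollary from Proposition \ref{prop516} once two structural facts are in place: that $E$ leaves $X\cup X_{l}$ invariant, so that $(E,X\cup X_{l})$ is a genuine action, and that this action is proper. First I would note that if $E$ is a group then Proposition \ref{prop516}(ii) gives $E\subseteq Iso(\widehat{X})$, hence $H\subseteq E\subseteq Iso(\widehat{X})$, so that $H\cap Iso(\widehat{X})=H$ and $X_{p}=X_{l}$; thus the maximal proper set $X\cup X_{p}$ of Theorem \ref{th512} is all of $X\cup X_{l}$, in accordance with Proposition \ref{prop516}(i).

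Next I would identify $X\cup X_{l}$ with the $E$-saturation $E\cdot X$ of $X$. The inclusion $X\cup X_{l}\subseteq E\cdot X$ is clear, since $X=e\cdot X$ and $X_{l}=H\cdot X$ with $H\subseteq E$. For the reverse inclusion I would invoke the decomposition $E=\widehat{Iso(X)}\cup H$ (available from the analysis of the Ellis semigroup for separable $X$): for $f\in E$ and $x\in X$, either $f=\widehat{g}$ with $g\in Iso(X)$, whence $fx=gx\in X$, or $f\in H$, whence $fx\in X_{l}$; in both cases $fx\in X\cup X_{l}$. Hence $X\cup X_{l}$ is $E$-invariant and $(E,X\cup X_{l})$ is a continuous action.

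For properness, the invariance just established forces the map $\omega$ of Proposition \ref{prop516} to take values in $(X\cup X_{l})\times(X\cup X_{l})\subseteq (X\cup X_{l})\times\widehat{X}$, so it coincides with the evaluation map of the action $(E,X\cup X_{l})$ composed with an inclusion; properness of $\omega$ from Proposition \ref{prop516}(i), combined with the fact that properness persists when the codomain is replaced by a subspace containing the image (cf.\ \cite{bour1}), then shows $(E,X\cup X_{l})$ is proper. The two density assertions are easy: $\widehat{Iso(X)}$ is dense in $E$ by the very definition of $E$ as the pointwise closure of $\widehat{Iso(X)}$, while for $y=hx\in X_{l}$ a defining sequence $g_{n}\to\infty$ with $\widehat{g_{n}}\to h$ gives $g_{n}x=\widehat{g_{n}}x\to hx=y$ with $g_{n}x\in X$, so $X$ is dense in $X\cup X_{l}$. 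Equivariance of the embedding $(g\mapsto\widehat{g},\,x\mapsto x)$ reduces to $\widehat{g}|_{X}=g$, which holds because $\widehat{g}$ is the isometric extension of $g$, and injectivity of $g\mapsto\widehat{g}$ follows since $\widehat{g}$ is determined on the dense set $X$; the square commutes because both composites carry $x\in X$ to $x\in\widehat{X}$.

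The step I expect to require the most care is the properness transfer: one must verify that properness of $\omega$ into the larger target $(X\cup X_{l})\times\widehat{X}$ descends to properness of the corestricted evaluation map into $(X\cup X_{l})\times(X\cup X_{l})$, i.e.\ that $(E,X\cup X_{l})$ is proper in the sense of \cite{bour1}. The identity $X\cup X_{l}=E\cdot X$ is exactly what legitimizes this corestriction, so once invariance is secured the remaining verifications are routine.
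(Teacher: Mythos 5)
Your proposal is correct and follows essentially the same route as the paper, which states this corollary as a summary of the preceding results: properness comes from Proposition \ref{prop516}(i), invariance of $X\cup X_{l}$ from Lemma \ref{lem511} (which you re-derive via $E\cdot X=X\cup X_{l}$ and the semigroup property, using the decomposition $E=\widehat{Iso(X)}\cup H$ of Proposition \ref{prop58}(i)), and the density and equivariance assertions from the definitions of $E$ and $X_{l}$. Your explicit attention to corestricting the codomain of $\omega$ from $(X\cup X_{l})\times\widehat{X}$ to $(X\cup X_{l})\times(X\cup X_{l})$ is a point the paper leaves implicit, and it is handled correctly.
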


The above result may lead to further structure theorems, cf. Question \ref{que519}.

\begin{Proposition}[Proposition \ref{prop71}]
If the Ellis semigroup $E$ is a group then the action $(E,X\cup X_{l})$ has a Borel section.
\end{Proposition}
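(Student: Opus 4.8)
The plan is to deduce from Proposition \ref{prop516} that the action $(E,X\cup X_{l})$ is proper and that $E$ is a Polish group, and then to produce the section as a Borel selection for the orbit map. First I would record the structural facts that make a selection argument possible. Since $\widehat{X}$ is a complete separable metric space, its isometry group $Iso(\widehat{X})$, with the topology of pointwise convergence, is a Polish group; as $E$ is a closed subgroup of $Iso(\widehat{X})$ by Proposition \ref{prop516}, $E$ is itself Polish. Properness of $\omega$ forces every stabiliser $E_{y}$ to be compact and every orbit $Ey$ to be closed in $X\cup X_{l}$, with the orbit map $E/E_{y}\to Ey$ a homeomorphism onto $Ey$ in its subspace topology; in particular each orbit is completely metrizable (Polish). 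I would also note that, writing $E=\widehat{Iso(X)}\cup H$ with $H\subseteq E$, one has $X\cup X_{l}=E\cdot X$, so (as $E$ is a group) every orbit of $(E,X\cup X_{l})$ meets $X$.

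Next I would pass to the orbit space. Because $E$ acts by isometries, the formula $\overline{d}(\pi(z),\pi(w))=\inf_{g\in E}d(z,gw)=\mathrm{dist}(z,Ew)$ defines a pseudometric on $Y:=E\backslash(X\cup X_{l})$, and it is a genuine metric precisely because the orbits are closed in $X\cup X_{l}$ (so $\mathrm{dist}(z,Ew)=0$ with $z\in X\cup X_{l}$ forces $z\in Ew$). With this metric $Y$ is separable and the orbit map $\pi:X\cup X_{l}\to Y$ is a continuous, open, $1$-Lipschitz surjection whose fibres are exactly the (closed, Polish) orbits. The section will be obtained by choosing, Borel-measurably, one point $\sigma(\eta)\in\pi^{-1}(\eta)$ for each $\eta\in Y$; then $S=\{p:\sigma(\pi(p))=p\}$, the equalizer of two Borel maps into the metric space $X\cup X_{l}$, is a Borel set meeting every orbit exactly once, i.e.\ a Borel section.

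For the selection itself I would run a Kuratowski--Ryll-Nardzewski type construction relative to a countable dense family: fix a countable dense set $\{x_{k}\}\subseteq X$ and, for each $\eta$, use the continuous functions $\eta\mapsto\mathrm{dist}(x_{k},\eta)=\overline{d}(\pi(x_{k}),\eta)$ to carve out a nested sequence of relatively open pieces of $\pi^{-1}(\eta)$ of shrinking diameter, converging to a single representative. The reason for working with a dense subset of $X$ and with the quotient metric, rather than with ambient ``nearest points'', is that the construction must be arranged so that the resulting Cauchy data converge inside the orbit and not merely in $\widehat{X}$.

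The hard part will be exactly this convergence step. The space $X\cup X_{l}$ contains $X$, which is dense in $\widehat{X}$, so $X\cup X_{l}$ is dense but in general not closed in $\widehat{X}$; hence it is not complete, and a naive ``closest point in the orbit'' need not exist, since bounded subsets of the non-locally-compact space $\widehat{X}$ need not be precompact. The selection must therefore be prevented from producing limits that escape into $\widehat{X}\setminus(X\cup X_{l})$. I expect to resolve this by performing the limiting construction not in $(\widehat{X},d)$ but by exploiting the completeness of the Polish group $E$ together with the compactness of the stabilisers: since each fibre is homeomorphic to the Polish coset space $E/E_{y}$ and $\pi$ is open, the nested-piece construction can be lifted to a Cauchy sequence of cosets in $E/E_{y}$, which converges there, its image being the desired representative inside the orbit. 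Verifying that this representative depends Borel-measurably on $\eta$, and that the whole scheme is insensitive to the non-completeness of $X\cup X_{l}$, is the technical core of the argument.
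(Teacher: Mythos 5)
Your first paragraph is exactly the paper's setup: Proposition \ref{prop516} gives properness of $\omega$ (and that $E$ is a closed subgroup of $Iso(\widehat{X})$), and properness forces each orbit $Ey$ to be closed. But from that point on your proof is not actually finished: you propose to build the selection by a Kuratowski--Ryll-Nardzewski-type scheme on the quotient metric space and you explicitly defer ``the technical core'' --- the convergence of the nested pieces inside the orbit and the Borel dependence of the representative --- to a lifting through $E/E_y$ that you do not carry out. As written, the argument stops one theorem short of a proof. The paper closes this gap by citing an off-the-shelf cross-section theorem (Kallman--Mauldin, \cite[Lemma 2]{kallman}, quoted at the start of \S 7): for a separable metric space with an equivalence relation whose saturations of open sets are Borel, there is a Borel set meeting each \emph{complete} equivalence class in exactly one point. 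Here the saturation of an open set is open (the group acts by homeomorphisms), so the only hypothesis to check is completeness of the classes.

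This is also where your stated worry dissolves. You are concerned that limits of the selection procedure might escape into $\widehat{X}\setminus(X\cup X_l)$ because $X\cup X_l$ need not be complete. But properness of $\omega:E\times(X\cup X_l)\to(X\cup X_l)\times\widehat{X}$ makes each orbit $Ey$ closed in the \emph{complete} space $\widehat{X}$, not merely in $X\cup X_l$; hence each orbit is itself a complete metric space, and any Cauchy approximation carried out within a single orbit converges inside that orbit. That is precisely the ``complete equivalence class'' hypothesis of the Kallman--Mauldin lemma, so no lifting to $E/E_y$ and no appeal to compactness of stabilisers is needed. Your route would very likely succeed if the lifting argument were written out (it is essentially a proof of the cross-section theorem in this special case), but the missing observation --- closedness in $\widehat{X}$ gives completeness of orbits, which is all a standard selection theorem requires --- is what turns the two-page programme you sketch into the paper's three-line proof.
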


As Theorem \ref{sectfund}(ii) mentioned above indicates, the notion of a Borel section, which according to the above result is a feature of the Cauchy-indivisible
actions on separable metric spaces, is remarkably related to that of a fundamental set in the locally compact case and may be, similarly, used for structural
theorems. So, it is interesting to ask whether the existing Borel section for the action $(E,X\cup X_l)$ can be reduced to a Borel section for the initial action
$(Iso(X),X)$, cf. Question \ref{que73}.

In order to indicate or to exclude possible directions for further investigation concerning Cauchy-indivisible actions, we study various examples, cf. Examples
\ref{ex513}, \ref{ex515}, \ref{ex520} and \ref{ex76}. Among them, an example that may be of independent interest, is the following (cf. $\S6$): Consider the action
$(Iso(Iso(\mathbb{Z})),Iso(\mathbb{Z}))$, where $\mathbb{Z}$ is the discrete space of the integers, and with suitable metrics on the  acting group
$Iso(Iso(\mathbb{Z}))$ and the underlying space $Iso(\mathbb{Z})$. We show that this action is proper and Cauchy-indivisible while the Ellis semigroup is not a group
and $Iso(Iso(\mathbb{Z}))$ has no Weil completion.

\section{Basic notions and notation}
For what follows, and in addition to the notation established in the introduction, $(X,d)$ will denote a metric space with metric $d$ and $Iso(X)$ will denote its
group of (surjective) isometries of $X$ endowed with the topology of pointwise convergence. With this topology $Iso(X)$ is a topological group \cite[Ch. X, \S 3.5
Corollary]{bour2}. Let $(\widehat{X},\widehat{d})$ stands for the completion of $(X,d)$. For a Cauchy sequence $\{ x_{n}\}$ in $X$ let $[x_{n}]\in\widehat{X}$ denote
the limit point of $\{ x_{n}\}$ in $\widehat{X}$. We denote by $\widehat{g}$ and $\widehat{Iso(X)}$ the lift of $g\in Iso(X)$ and the lift of the group $Iso(X)$
respectively in $C(\widehat{X},\widehat{X})$, the space of the continuous selfmaps of $\widehat{X}$ (which is considered with the topology of pointwise convergence).

A continuous action of a topological group $G$ on a topological space $X$ is a continuous map $G\times X\to X$ with $(g,x)\mapsto gx$, $g\in G, x\in X$ such that
$(e,g)\mapsto x$, for every $x\in X$ where $e$ denotes the unit element of $G$, and $(h,(g,x))\mapsto (hg)x$ for every $h,g\in G$ and $x\in X$. When the action map is
known we will denote the action simply by $(G,X)$. Let $U\subset X$, then $GU:=\{gx \,|\, g\in G, x\in U\}$. Especially, if $U=\{x\}$ then the set $Gx:=G\{x\}$ is
called the orbit of $x\in X$ under $G$. The subgroup $G_x:=\{g\in G \,|\, gx=x\}$ of $G$ is called the isotropy group of $x\in X$. The natural evaluation action of
$Iso(X)$ on $X$ is the map $Iso(X)\times X\to X$ with $(g,x)\mapsto g(x)$, $g\in Iso(X), x\in X$ and is denoted by $(Iso(X),X)$. If we endow $Iso(X)$ with the
topology of pointwise convergence this action is always continuous. As usual, $S(x,\varepsilon)$ will denote the open ball centered at $x$ with radius $\varepsilon
>0$.

\begin{definition}\label{proper}
A continuous action $(G,X)$ is (equivalently to the Bourbaki-definition) proper if $J(x)=\emptyset$, for every $x\in X$, where
\[
\begin{split}
J(x)=\{ &y\in X\,|\, \mbox{there exist nets}\,\, \{x_{i}\}\,\, \mbox{in}\,\, X\,\, \mbox{and}\,\, \{g_{i}\}\,\, \mbox{in}\,\, G\\
        &\mbox{with}\,\, g_{i}\to\infty, \lim x_{i}=x\,\, \mbox{and}\,\, \lim g_{i}x_{i}=y\}
\end{split}
\]
denotes the extended (prolongational) limit set of $x\in X$.
\end{definition}

It is easily seen that in the special case of actions by isometries $J(x)=L(x)$ holds for every $x\in X$, where
\[
\begin{split}
L(x)=\{ &y\in X\,|\, \mbox{there exists a net}\, \{g_{i}\}\, \mbox{in}\,\, G\\
        &\mbox{with}\,\, g_{i}\to\infty\,\, \mbox{and}\,\, \lim g_{i}x=y\},
\end{split}
\]
denote the limit set of $x\in X$ under the action of $G$ on $X$. Hence an action by isometries $(G,X)$ is proper if and only if $L(x)=\emptyset$ for every $x\in X$.

\section{Cauchy-indivisibility and proper actions on locally compact metric spaces}
In this section we show that for group actions on locally compact metric spaces the notions of properness and Cauchy-indivisibility coincide. We start with the
following easily proved observation.

\begin{lemma} \label{lem31}
Let $(X,d)$ be a locally compact metric space and $\{ g_{i}\}\subset Iso(X)$ be a net such that $\{ g_{i}x\}$ is a Cauchy net for some $x\in X$. Then there exists a
point $y\in X$ such that $g_ix\to y$.
\end{lemma}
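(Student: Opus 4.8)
The plan is to exploit the fact that a \emph{surjective} isometry transports a compact ball around one point to a compact ball around its image with the \emph{same} radius; this uniformity is what forces an entire tail of the Cauchy net $\{g_i x\}$ to sit inside a single compact subset of $X$, after which convergence follows from elementary net arguments. Throughout I write $B(z,\rho)=\{\,w\in X : d(z,w)\le\rho\,\}$ for the closed ball.

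First I would use local compactness at the point $x$: since $X$ is a locally compact metric space, $x$ has a compact neighbourhood, and hence there is $\rho>0$ for which the closed ball $B(x,\rho)$ is compact, being a closed subset of such a neighbourhood. The crucial step, which I expect to carry the whole argument, is the following uniform transport: each $g_i$ is a surjective isometry, so it is a homeomorphism of $X$ and, using surjectivity together with $d(g_iw,g_ix)=d(w,x)$, one checks the identity $g_i\bigl(B(x,\rho)\bigr)=B(g_ix,\rho)$. Consequently $B(g_ix,\rho)$ is compact for \emph{every} index $i$, with the radius $\rho$ \emph{independent of} $i$. This is exactly the feature that can fail for non-isometric maps, where the radius of compactness may shrink along the net; here the isometry hypothesis pins it down.

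Now I would invoke the Cauchy hypothesis to localize the net. There is an index $i_0$ such that $d(g_ix,g_jx)<\rho$ whenever $i,j\ge i_0$; in particular $d(g_ix,g_{i_0}x)<\rho$, so $g_ix\in B(g_{i_0}x,\rho)=:K$ for all $i\ge i_0$. Thus the net $\{g_ix\}$ is eventually contained in the compact set $K\subset X$, and therefore admits a cluster point $y\in K$, i.e.\ a subnet converging to $y$ in $X$. I would finish with the standard observation that a Cauchy net possessing a convergent subnet converges to the same limit: given $\varepsilon>0$, combine an index past the Cauchy threshold for $\varepsilon/2$ with a subnet term within $\varepsilon/2$ of $y$ and apply the triangle inequality. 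Hence $g_ix\to y$ with $y\in K\subset X$, which is the assertion. The only genuinely nontrivial ingredient is the uniform-radius transport of compactness via the isometries; the remaining steps are routine.
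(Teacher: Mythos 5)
Your proof is correct: the uniform transport of the compact ball $B(x,\rho)$ to $B(g_ix,\rho)$ via the surjective isometries, trapping a tail of the Cauchy net in a single compact set, and then the standard ``Cauchy net with a cluster point converges'' argument is exactly the intended reasoning. The paper itself gives no proof, dismissing the lemma as an ``easily proved observation,'' so there is nothing to contrast with; your write-up supplies the missing details faithfully.
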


\begin{proposition} \label{prop32}
Let $(X,d)$ be a locally compact metric space. The action $(Iso(X),X)$ is proper if and only if it is Cauchy-indivisible.
\end{proposition}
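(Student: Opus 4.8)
The plan is to prove the two implications separately, the forward direction being essentially vacuous and the converse carrying all of the content. For the forward direction, recall that for isometric actions properness is equivalent to $L(x)=\emptyset$ for every $x\in X$ (cf. Definition \ref{proper} and the discussion following it); that is, no net $g_i\to\infty$ in $Iso(X)$ admits a point $x$ with $\{g_ix\}$ convergent. By Lemma \ref{lem31}, in a locally compact space a Cauchy net of the form $\{g_ix\}$ is automatically convergent; equivalently, a non-convergent such net is non-Cauchy. Hence, if the action is proper, then for every net $g_i\to\infty$ and every $x$ the net $\{g_ix\}$ is non-convergent and therefore non-Cauchy, so the hypothesis of Definition \ref{def21} is never satisfied and Cauchy-indivisibility holds vacuously.

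For the converse I would argue by contradiction. Suppose the action is Cauchy-indivisible but not proper, so that $L(x_0)\neq\emptyset$ for some $x_0$; pick a net $g_i\to\infty$ with $g_ix_0\to y_0$. Then $\{g_ix_0\}$ is Cauchy, so Cauchy-indivisibility forces $\{g_ix\}$ to be Cauchy for every $x$, and Lemma \ref{lem31} (applied at each point) upgrades this to convergence, so that $h(x):=\lim_i g_ix$ exists for all $x$. Passing to the limit in $d(g_ix,g_ix')=d(x,x')$ shows that $h$ is distance-preserving, i.e. an isometric embedding of $X$ into itself. The goal is then to show that $h$ is in fact surjective, hence $h\in Iso(X)$; for then $g_i\to h$ pointwise means $g_i\to h$ in $Iso(X)$, contradicting $g_i\to\infty$.

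The hard part is exactly this surjectivity, since a pointwise limit of isometries need not be onto in a general metric space, and this is where I expect the only genuine difficulty to lie. I would recover it by a symmetric use of the inverse net $\{g_i^{-1}\}$. First, the isometry identity gives $d(g_i^{-1}y_0,x_0)=d(y_0,g_ix_0)\to 0$, so $g_i^{-1}y_0\to x_0$ and in particular $\{g_i^{-1}y_0\}$ is Cauchy. Next, $\{g_i^{-1}\}$ itself tends to infinity: otherwise a subnet $g_{i_k}^{-1}$ would converge in $Iso(X)$, and since $Iso(X)$ is a topological group, so that inversion is continuous, the subnet $g_{i_k}$ would converge in $Iso(X)$, contradicting $g_i\to\infty$. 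Applying Cauchy-indivisibility a second time, now to the net $\{g_i^{-1}\}$ (which goes to infinity and has the Cauchy orbit $\{g_i^{-1}y_0\}$), together with Lemma \ref{lem31}, yields that $\{g_i^{-1}y\}$ converges for every $y\in X$, say $g_i^{-1}y\to z_y$.

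Finally I would close the loop. For each $y$, since $g_i^{-1}y\to z_y$ the isometry identity gives $d(g_iz_y,y)=d(z_y,g_i^{-1}y)\to 0$, so $g_iz_y\to y$; but also $g_iz_y\to h(z_y)$, whence $h(z_y)=y$. Thus every $y\in X$ lies in the image of $h$, so $h$ is a surjective isometry, $h\in Iso(X)$, and $g_i\to h$ in $Iso(X)$ contradicts $g_i\to\infty$. The delicate point throughout is the double invocation of Cauchy-indivisibility, once for $\{g_i\}$ and once for $\{g_i^{-1}\}$, bridged by the continuity of inversion; everything else follows directly from the isometry relation and Lemma \ref{lem31}.
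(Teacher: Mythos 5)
Your proof is correct and takes essentially the same route as the paper: the nontrivial direction is handled by building the pointwise limit $h$ via Lemma \ref{lem31}, then applying Cauchy-indivisibility a second time to the inverse net $\{g_i^{-1}\}$ (using $g_i^{-1}y_0\to x_0$) to produce the inverse of $h$ and conclude $h\in Iso(X)$, contradicting $g_i\to\infty$. Your explicit checks that $g_i^{-1}\to\infty$ and that properness makes the other direction vacuous are exactly the details the paper leaves to the reader.
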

\begin{proof}
Assume that  $(Iso(X),X)$ is Cauchy-indivisible. We will show that the limit sets $L(x)$ are empty for every $x\in X$. Assume the contrary, that is there exist a net
$\{ g_{i}\}$ in $Iso(X)$ and $x,y\in X$ such that $g_{i}\to\infty$ and $g_{i}x\to y$. We will show that $g_{i}\to h$ for some $h\in Iso(X)$, which is a contradiction
to the assumption $g_{i}\to\infty$.  Since $(Iso(X),X)$ is Cauchy-indivisible then $\{ g_{i}x\}$ is a Cauchy net, for every $x\in X$.  Therefore,  by the previous
lemma, there is a map $h:X\to X$ defined by $h(x):=y$ such that $g_i\to h$ pointwise on $X$ and $h$ preserves the metric $d$. Observe that $g_i^{-1}y\to x$ since
$d(g_i^{-1}y,x)=d(y,g_ix)$. Applying Cauchy-indivisibility for the action $(Iso(X),X)$ and the previous lemma again, we conclude that there exists a map $f:X\to X$
such that $g_i^{-1}\to f$ pointwise on $X$ and $f$ preserves the metric $d$. Obviously $f$ is the inverse map of $h$, hence $h\in Iso(X)$.

The converse implication follows easily in a similar way.
\end{proof}

If $X$ is locally compact and $G$ acts properly on $X$ (hence $G$ is a locally compact group), it is well known, see e.g. \cite{koszul}, that there exists a
$G$-invariant compatible metric on $X$. Compatible means that this metric induces the topology of $X$. Hence, the previous proposition states the following result
that characterizes the properness of actions on locally compact metric spaces independently of the connectedness of the underlying space.

\begin{theorem} \label{th33}
Let $(X,d)$ be a locally compact metric space. An action $(G,X)$ is proper if and only if it is Cauchy-indivisible.
\end{theorem}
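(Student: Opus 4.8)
The plan is to deduce Theorem \ref{th33} from Proposition \ref{prop32}, using the invariant metric supplied by Koszul's theorem \cite{koszul} as the bridge between a general proper action and an action by isometries. The essential point is that Cauchy-indivisibility, as formulated in Definition \ref{def21}, is sensitive to the choice of metric, whereas properness is purely topological; so the right metric to work with is a $G$-invariant compatible one, call it $\rho$, with respect to which $G$ acts by isometries and is realized as a (closed) subgroup of $Iso(X,\rho)$. Once the action is isometric I may invoke the identity $J(x)=L(x)$ recorded after Definition \ref{proper}, which is exactly what lets properness be tested on the limit sets $L(x)$ alone, and thereby line up the hypotheses with those of Proposition \ref{prop32}.

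First I would treat the implication proper $\Rightarrow$ Cauchy-indivisible. Assuming $(G,X)$ is proper, Koszul's theorem furnishes a $G$-invariant compatible metric $\rho$, and with respect to $\rho$ the action is by isometries, so $L_{\rho}(x)=\emptyset$ for every $x$. I would then observe that the hypothesis occurring in Definition \ref{def21} is in fact vacuous here: if $g_{i}\to\infty$ in $G$ and $\{g_{i}x_{0}\}$ were $\rho$-Cauchy for some $x_{0}$, then Lemma \ref{lem31} would give $g_{i}x_{0}\to y_{0}$ for some $y_{0}\in X$, whence $y_{0}\in L_{\rho}(x_{0})$, contradicting the emptiness of the limit set. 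Thus no divergent net possesses a Cauchy orbit-image at all, and Cauchy-indivisibility holds trivially.

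Next I would prove the converse, Cauchy-indivisible $\Rightarrow$ proper, running the argument of Proposition \ref{prop32} for the subgroup $G$ in place of the full group $Iso(X,\rho)$. If the action were not proper, some limit set would be nonempty, yielding a net with $g_{i}\to\infty$ in $G$ and $g_{i}x\to y$; then $\{g_{i}x\}$ is $\rho$-Cauchy, Cauchy-indivisibility propagates this to every point of $X$, and Lemma \ref{lem31} produces a pointwise limit map $h$ preserving $\rho$. Applying the same reasoning to $\{g_{i}^{-1}\}$ yields the inverse map, so $h\in Iso(X,\rho)$ and $g_{i}\to h$ pointwise. Since the image of $G$ is closed in $Iso(X,\rho)$, the limit $h$ is attained inside $G$, contradicting $g_{i}\to\infty$. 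Hence $L_{\rho}(x)=\emptyset$ for all $x$, and the identity $J=L$ for isometric actions gives properness.

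The hard part, and the place where the argument goes beyond a formal restatement of Proposition \ref{prop32}, is the interaction between the metric and the group. One must (i) legitimize the passage to the invariant metric $\rho$ in place of the given $d$, since Cauchy-indivisibility is metric-dependent while properness is not, so the equivalence is most naturally read for an invariant compatible metric; and (ii) guarantee that the limiting isometry $h$ extracted from a divergent net actually lies in $G$ and not merely in the ambient $Iso(X,\rho)$, that is, that $G$ embeds as a \emph{closed} subgroup. This closedness is precisely what local compactness together with the invariant metric secures; without it one can produce Cauchy-indivisible non-proper actions (for instance a dense, non-closed subgroup of an isometry group), so it is the step I expect to require the most care.
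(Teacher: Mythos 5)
Your route is exactly the paper's: the entire published proof of Theorem \ref{th33} consists of the observation that a proper action on a locally compact space admits a $G$-invariant compatible metric (Koszul), after which Proposition \ref{prop32} (in the closed-subgroup version indicated in the remark that follows the theorem) is invoked. Your forward direction is a correct and more explicit rendering of this: with respect to the invariant metric $\rho$ properness forces $L_{\rho}(x)=\emptyset$, and by Lemma \ref{lem31} no divergent net can then have a $\rho$-Cauchy orbit, so Cauchy-indivisibility holds vacuously. You are also right, and more careful than the paper, to insist that the conclusion is Cauchy-indivisibility for $\rho$ rather than for the originally given $d$; the two are not interchangeable (a proper $\mathbb{Z}$-action on $(0,1)\sqcup\mathbb{R}$ that contracts the first clopen component toward its missing endpoint while translating the second is proper but violates Definition \ref{def21} for the Euclidean metric).

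The genuine gap is in your converse. Koszul's theorem delivers the invariant metric only \emph{from} properness, so in the direction ``Cauchy-indivisible $\Rightarrow$ proper'' you are not entitled to $\rho$, to the identity $J=L$ (valid only for isometric actions), or to the assertion that $G$ sits as a closed subgroup of $Iso(X,\rho)$: all three are consequences of the very properness you are trying to establish, and the argument as written is circular. You correctly isolate closedness as the crucial step and even remark that a dense non-closed subgroup of an isometry group yields a Cauchy-indivisible non-proper action; but that remark is not a warning about a delicate step, it is a counterexample to the implication for general $G$ --- the irrational winding $\mathbb{R}\to Iso(T^{2})$ acts isometrically on a compact (hence locally compact) metric space, is Cauchy-indivisible in the sense of Definition \ref{def21} (since $d(t_{i}x,t_{j}x)$ is independent of $x$ for translations), and is not proper. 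Nothing in the hypotheses of Theorem \ref{th33} supplies the required closedness, so the converse cannot be completed along these lines. To be fair, the paper's own one-sentence proof silently elides the same point; your write-up has the merit of making the missing ingredient visible, but it does not close the gap.
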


\begin{remark}
The previous theorem also holds, and can be similarly proved, if we replace the full group of isometries of $X$ by a closed subgroup of it or if we replace the local
compactness of $X$ by completeness.
\end{remark}

\section{Cauchy-indivisibility vs properness}
In this section we provide examples showing that Cauchy-indivisi\-bi\-li\-ty and properness are distinct notions for isometric actions on separable and non locally
compact metric spaces. We also provide some criteria for the coexistence of Cauchy-indivisible and proper actions on the basis of the dynamical behavior of the
lifting of the action $(Iso(X),X)$ in the completion of the underlying space.

\begin{remark} \label{rem41}
The example in \S 6 shows that the two notions may coexist also in the case when $X$ is neither locally compact nor complete.
\end{remark}

The following example shows that the action $(Iso(X),X)$ can be proper and not Cauchy-indivisible.

\begin{example} \label{ex42}
Let $X$ be the set $\mathbb{Q}$ of the rational numbers endowed with the Euclidean metric. It is easy to see that the action $(Iso(X),X)$ is proper. Take a sequence
of rational numbers $\{ q_{n}\}$ such that $q_{n}\to a$, where $a$ is an irrational. Let $\{ g_{n}\}\subset Iso(X)$ with $g_{n}x:=(-1)^{n}x+q_{n}$ for every $x\in X$,
then $g_{n}\to\infty$ in $Iso(X)$. Since $g_{n}0=q_{n}$ for every $n\in \mathbb{N}$, the sequence $\{ g_{n}0\}$ is Cauchy. But for $x\neq 0$ the sequence $\{
g_{n}x\}$ has two limit points in $\mathbb{R}$ hence cannot be a Cauchy sequence.
\end{example}

The following example shows that the action $(Iso(X),X)$ can be Cauchy-indivisible and not proper.

\begin{example} \label{ex43}
Let $X$ be the set $\mathbb{Q}+\sqrt{2}\,\mathbb{N}$  endowed with the Euclidean metric. Its group of isometries is $\mathbb{Q}$ acting by translations (reflections
are excluded because of the addend $\sqrt{2}\,\mathbb{N}$). Therefore, $(Iso(X),X)$ is Cauchy-indivisible. However, the action $(Iso(X),X)$ is not proper. To see that
take a sequence of rational numbers $\{ q_{n}\}$ such that $q_{n}\to \sqrt{2}$. Let $\{ g_{n}\}\subset Iso(X)$ with $g_{n}x:=x+q_{n}$. Observe that
$g_n^{-1}\sqrt{2}\to 0\notin X$. Therefore $g_n\to \infty$ in $Iso(X)$. Since $g_n\sqrt{2}\to 2\sqrt{2}\in X$ the limit set $L(\sqrt{2})$ is not empty, so the action
$(Iso(X),X)$ is not proper.
\end{example}

Motivated by these examples we give necessary and sufficient conditions for a Cauchy-indivisible action $(Iso(X),X)$ to be proper and vice versa:

\begin{proposition} \label{prop44}
Let $Iso(X)$ be Cauchy-indivisible. The following are equivalent:
\begin{itemize}
\item[(i)]  The action $(Iso(X),X)$ is proper.
\item[(ii)] If $h$ is in the pointwise closure of $\widehat{Iso(X)}$ in $C(\widehat{X},\widehat{X})$ then either $h(X)\subset X$ or $h(X)\subset \widehat{X}\setminus X$.
\end{itemize}
\end{proposition}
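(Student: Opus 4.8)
The plan is to prove the two implications separately, using throughout the characterization from \S2 that an isometric action is proper exactly when $L(x)=\emptyset$ for every $x\in X$, together with the facts that the topology on $Iso(X)$ is pointwise convergence and that $\widehat{X}$ is complete.

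For (i)$\Rightarrow$(ii) I would start from an arbitrary $h$ in the pointwise closure of $\widehat{Iso(X)}$ and fix a net $\{g_i\}\subset Iso(X)$ with $\widehat{g_i}\to h$ pointwise on $\widehat{X}$. I distinguish two cases according to whether $\{g_i\}$ has a convergent subnet in $Iso(X)$. If it does, say $g_{i_j}\to g\in Iso(X)$, then for $x\in X$ we get $h(x)=\lim_i g_ix=\lim_j g_{i_j}x=gx$, so $h(X)=g(X)=X$ and the first alternative of (ii) holds. If it does not, then $g_i\to\infty$, and here I use properness directly: for each $x\in X$, were $h(x)=\lim_i g_ix$ a point of $X$, it would lie in $L(x)$, contradicting $L(x)=\emptyset$; hence $h(x)\in\widehat{X}\setminus X$ for every $x$, i.e.\ $h(X)\subset\widehat{X}\setminus X$, the second alternative. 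This direction is essentially formal and does not really use Cauchy-indivisibility.

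For the converse (ii)$\Rightarrow$(i) I would argue by contradiction: suppose the action is not proper, so there are $x_0\in X$, a point $y_0\in X$, and a net $g_i\to\infty$ in $Iso(X)$ with $g_ix_0\to y_0$. Since $\{g_ix_0\}$ converges it is Cauchy, so by Cauchy-indivisibility $\{g_ix\}$ is Cauchy for every $x\in X$; completeness of $\widehat{X}$ then makes $\{\widehat{g_i}x\}$ convergent for every $x\in X$, and an equicontinuity (isometry) estimate extends this to pointwise convergence of $\{\widehat{g_i}\}$ on all of $\widehat{X}$ to a metric-preserving $h\in C(\widehat{X},\widehat{X})$. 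As $h(x_0)=y_0\in X$, the dichotomy (ii) forces $h(X)\subset X$. Running the same argument on the inverses — noting that $g_i^{-1}\to\infty$ since inversion is a homeomorphism of $Iso(X)$, and that $g_i^{-1}y_0\to x_0$ because $d(g_i^{-1}y_0,x_0)=d(y_0,g_ix_0)\to 0$ — produces a limit $f$ of $\{\widehat{g_i^{-1}}\}$ with $f(X)\subset X$.

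The decisive step, and the one I expect to be the main obstacle, is to identify $f$ as the inverse of $h$ and thereby promote $h|_X$ to an element of $Iso(X)$. I would pass to the limit in the identity $\widehat{g_i^{-1}}\,\widehat{g_i}=\mathrm{id}$: because the $\widehat{g_i}$ are isometries, hence equicontinuous, the composition converges and yields $f\circ h=\mathrm{id}$, and symmetrically $h\circ f=\mathrm{id}$, on all of $\widehat{X}$, so $h\in Iso(\widehat{X})$ with $h^{-1}=f$. Combining $h(X)\subset X$ with $f(X)=h^{-1}(X)\subset X$ gives $h(X)=X$, i.e.\ $h|_X\in Iso(X)$. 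Finally $g_ix\to h(x)$ for every $x\in X$ means $g_i\to h|_X$ in the pointwise-convergence topology of $Iso(X)$, contradicting $g_i\to\infty$; thus $L(x_0)=\emptyset$, and since $x_0$ was arbitrary the action is proper. The only genuinely delicate points are justifying convergence of the composition through equicontinuity and ensuring that the whole net, not merely a subnet, converges in $Iso(X)$.
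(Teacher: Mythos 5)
Your proposal is correct and follows essentially the same route as the paper: for (i)$\Rightarrow$(ii) one uses properness to split according to whether the approximating net converges or diverges, and for (ii)$\Rightarrow$(i) one uses Cauchy-indivisibility to build the metric-preserving limits $h$ of $\{\widehat{g_i}\}$ and $f$ of $\{\widehat{g_i^{-1}}\}$, applies the dichotomy to both, and identifies $f=h^{-1}$ to contradict $g_i\to\infty$. The composition/equicontinuity details you flag as delicate are exactly the steps the paper compresses into ``using the same arguments as before.''
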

\begin{proof}
Assume that the action $(Iso(X),X)$ is proper and $h$ is in the pointwise closure of $\widehat{Iso(X)}$ in $C(\widehat{X},\widehat{X})$. Then there is a net $\{
\widehat{g_{i}}\}$ in $\widehat{Iso(X)}$ such that $\widehat{g_{i}}\to h$ pointwise in $\widehat{X}$. If $h(X)\cap X\neq\emptyset$ then there is some $x\in X$ such
that $\widehat{g_{i}}x\to hx\in X$. Since the action $(Iso(X),X)$ is proper the net $\{ g_{i}\}$ has a convergent subnet in $Iso(X)$. Then it is easy to see that
$h\in\widehat{Iso(X)}$, hence $h(X)\subset X$.

Assume now that condition (ii) holds. We will show that the limit sets $L(x)$ are empty for every $x\in X$, hence the action $(Iso(X),X)$ is proper. We will proceed
by contradiction. Assume that there exist $x,y\in X$ and a net $\{ g_{i}\}$ in $Iso(X)$ with $g_{i}x\to y$ and $g_{i}\to\infty$ in $Iso(X)$. Since $\{ g_{i}x\}$ is a
Cauchy net in $X$ and $Iso(X)$ is Cauchy-indivisible then $\{ g_{i}x\}$ is a Cauchy net for every $x\in X$, hence $\{ g_{i}x\}$ converges in $\widehat{X}$ for every
$x\in X$. So, we can define a map $h:X\to\widehat{X}$ by letting $hx:=\lim \widehat{g_{i}}x$. It is easy to see that $h$ preserves the metric $\widehat{d}$ on $X$.
Thus, if $w\in\widehat{X}$ and $\{ x_{n}\}\subset X$ is a sequence in $X$ such that $x_{n}\to w$ in $\widehat{X}$ then $\{ hx_{n}\}$ is a Cauchy sequence in $X$,
hence it converges to a point in $\widehat{X}$ which is independent of the choice of the sequence $\{ x_{n}\}$. Then, by \cite[Ch. I, \S 8.5 Theorem 1]{bour1}, the
map $h:X\to \widehat{X}$ has a unique continuous extension on $\widehat{X}$. It is easy to see that $\widehat{g_{i}}\to h$ pointwise on $\widehat{X}$, thus $h$ is in
the pointwise closure of $\widehat{Iso(X)}$ in $C(\widehat{X},\widehat{X})$. Since $g_{i}x\to y$ then $hx=y$ where $x,y\in X$. So using our hypothesis $h(X)\subset
X$. Since $g_i$ preserves the metric $d$ then $g_{i}^{-1}y\to x$. Using the same arguments as before we have that $h\in\widehat{Iso(X)}$ hence the net $\{ g_{i}\}$
converges in $Iso(X)$, a contradiction to the assumption $g_{i}\to\infty$ in $Iso(X)$.
\end{proof}

\begin{proposition} \label{prop45}
Assume that $(Iso(X),X)$ is a proper action. The following are equivalent:
\begin{itemize}
\item[(i)]  $Iso(X)$ is Cauchy-indivisible.
\item[(ii)] Let $\{ g_{i}\}\subset Iso(X)$ a net with $g_{i}\to\infty$ and $\{ g_{i}x\}$ be a Cauchy net for some $x\in X$. If $y\in X$ then the net $\{ g_{i}y \}$ can not
have more than one limit point in the completion $\widehat{X}$ of $X$.
\end{itemize}
\end{proposition}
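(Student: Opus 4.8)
The plan is to treat the two implications separately: the forward one is immediate, while the reverse one carries all the weight and forces the use of properness.

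\medskip

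\emph{The implication (i)$\Rightarrow$(ii).} Suppose $Iso(X)$ is Cauchy-indivisible and let $\{g_i\}$ be a net with $g_i\to\infty$ and $\{g_ix\}$ Cauchy for some $x\in X$. By Cauchy-indivisibility the net $\{g_iy\}$ is Cauchy for \emph{every} $y\in X$, hence converges in the completion $\widehat X$ to a single point of $\widehat X$. A convergent net in a (Hausdorff) metric space has a unique limit, so $\{g_iy\}$ has exactly one limit point and in particular not more than one. No use of properness is needed for this direction.

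\medskip

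\emph{The implication (ii)$\Rightarrow$(i), by contraposition.} Assume the action is proper but \emph{not} Cauchy-indivisible, so there is a net $\{g_i\}$ with $g_i\to\infty$, with $\{g_ix_0\}$ Cauchy of limit $\xi\in\widehat X$, and with $\{g_iy_0\}$ not Cauchy for some $y_0\in X$. I would first record two consequences of properness. Since $g_i\to\infty$ and $\{g_ix_0\}$ converges, the relation $L(x_0)=\emptyset$ forces $\xi\in\widehat X\setminus X$ (otherwise $\xi\in L(x_0)$). Moreover every isotropy group $G_{x_0}$ is compact, being the preimage of the compact set $\{(x_0,x_0)\}$ under the proper map $(g,x)\mapsto(x,gx)$; more generally, for each compact $K\subset X$ the set $\{g\in Iso(X)\mid gK\cap K\neq\emptyset\}$ is relatively compact in $Iso(X)$.

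\medskip

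\emph{The construction.} Since $\{g_iy_0\}$ is not Cauchy, I would pick $\delta>0$ and subnets $\{g_{\beta_\alpha}\},\{g_{\gamma_\alpha}\}$ with $d(g_{\beta_\alpha}y_0,g_{\gamma_\alpha}y_0)\ge\delta$, both orbits of $x_0$ still tending to $\xi$. Setting $k_\alpha:=g_{\gamma_\alpha}^{-1}g_{\beta_\alpha}$, isometry invariance gives $d(k_\alpha x_0,x_0)=d(g_{\beta_\alpha}x_0,g_{\gamma_\alpha}x_0)\to 0$, so $k_\alpha x_0\to x_0\in X$. Feeding the convergent sequence $\{k_\alpha x_0\}\cup\{x_0\}$ (a compact set $K$) into the relative-compactness characterization above produces a subnet $k_\alpha\to k$ in $Iso(X)$ with $k\in G_{x_0}$ (so $kx_0=x_0$) and $d(ky_0,y_0)\ge\delta$, hence $ky_0\neq y_0$. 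Now interleave $\{g_{\gamma_\alpha}\}$ and $\{g_{\gamma_\alpha}k\}$ into a single net $\{G_\mu\}$ over the directed set $A\times\{0,1\}$; it still satisfies the hypothesis of (ii), since $G_\mu\to\infty$ and $G_\mu x_0\to\xi$ (because $kx_0=x_0$). The orbit $\{G_\mu y_0\}$ contains the two subnets $\{g_{\gamma_\alpha}y_0\}$ and $\{g_{\gamma_\alpha}(ky_0)\}$, which stay at distance $d(y_0,ky_0)\ge\delta$ from one another; if both converge, their limits are two distinct limit points of $\{G_\mu y_0\}$, contradicting (ii).

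\medskip

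\emph{The main obstacle.} The delicate point is precisely the conditional ``if both converge'': producing two limit points requires the bounded orbits $\{g_{\gamma_\alpha}y_0\}$ and $\{g_{\gamma_\alpha}(ky_0)\}$ to admit convergent subnets in $\widehat X$. Because $\widehat X$ is only assumed complete and separable and \emph{not} locally compact, boundedness of the orbit (which follows from $d(g_{\gamma_\alpha}y_0,\xi)\to d(y_0,x_0)$) does not by itself yield a convergent subnet. I therefore expect the real work to be ruling out the ``escaping'' alternative, in which $\{g_iy_0\}$ is uniformly $\delta$-separated and has no limit point whatsoever --- a situation that does \emph{not} violate $L(y_0)=\emptyset$ and hence is not excluded by properness in its weak form. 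The heart of the argument should be to exploit the compactness of the isotropy groups $G_{x_0}$ together with the compact-preimage formulation of properness to establish that $\{g_iy_0\}$ is in fact relatively compact in $\widehat X$. Once relative compactness is secured, a bounded non-Cauchy net automatically has at least two limit points, and the contradiction with (ii) follows at once, completing the contrapositive.
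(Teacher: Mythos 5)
Your forward direction is correct and is the same as the paper's (Cauchy-indivisibility makes every orbit net $\{g_iy\}$ Cauchy, hence convergent in $\widehat{X}$ with a unique limit). For the converse you also set up exactly the right objects: the two $\delta$-separated subnets, the quotient net $k_\alpha=g_{\gamma_\alpha}^{-1}g_{\beta_\alpha}$ with $k_\alpha x_0\to x_0$, and properness to extract a convergent subnet $k_\alpha\to k$ with $kx_0=x_0$ and $d(ky_0,y_0)\ge\delta$; this is the same first move as in the paper, where the net $h_{i,j}=g_j^{-1}g_i$ plays the role of your $k_\alpha$. But the argument then stops at precisely the step that carries all the content, and you say so yourself: to contradict (ii) you must actually \emph{exhibit} two limit points of an orbit net in $\widehat{X}$, and your interleaving construction only does so ``if both converge.'' Nothing in the proposal establishes that convergence, so as written this is a gap, not a proof.

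Moreover, the repair you propose --- showing the whole orbit $\{g_iy_0\}$ is relatively compact in $\widehat{X}$ --- aims at the wrong target: without local compactness a bounded orbit need not be relatively compact, and relative compactness is far more than is needed. The paper's mechanism is to use the convergence $g_{i_{l_m}}^{-1}g_{i_{k_m}}\to g$ in $Iso(X)$ to show that the two $\varepsilon$-separated subnets $\{g_{i_{k_m}}y\}$ and $\{g_{i_{l_m}}y\}$ are each \emph{Cauchy} nets in $X$; then \emph{completeness} of $\widehat{X}$, not compactness, supplies their limits, which are two points at distance at least $\varepsilon$, contradicting (ii) for the original net. Note also that once this Cauchy property is in hand your interleaved net $\{G_\mu\}$ is unnecessary: since $d(g_{\beta_\alpha}y_0,\,g_{\gamma_\alpha}ky_0)=d(k_\alpha y_0,ky_0)\to 0$, the two limit points already belong to the original net $\{g_iy_0\}$. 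The missing lemma you would need to supply is therefore: if the action is proper, $\{g_ix_0\}$ is Cauchy and $g_{\gamma_\alpha}^{-1}g_{\beta_\alpha}$ converges in $Iso(X)$, then the separated subnets $\{g_{\beta_\alpha}y_0\}$ and $\{g_{\gamma_\alpha}y_0\}$ are Cauchy --- this is exactly where properness does its real work, and it is the step your proposal leaves open.
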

\begin{proof}
The direction from (i) to (ii) is trivial. If the converse implication does not hold, then there is a Cauchy net $\{ g_{i}x\}$ such that there is $y\in X$, an
$\varepsilon >0$ and subnets $\{ g_{i_{k}}y\}$, $\{ g_{i_{l}}y\}$ of $\{ g_{i}y \}$ such that $d(g_{i_{k}}y,g_{i_{l}}y)\geq\varepsilon$ for every index $k,l$. Since
$\{ g_{i}x\}$ is a Cauchy net in $X$ then we may assume that $d(g_{i_{k}}x,g_{i_{l}}x)\to 0$. Hence, $d(g_{i_{k}}^{-1}g_{i_{l}}x,x)\to 0$. We can define a new net $\{
h_{i,j}\}\subset Iso(X)$ by letting $h_{i,j}:=g_{j}^{-1}g_{i}$ for every pair of indices $(i,j)$, with direction defined by $(i_{1},j_{1})\leq (i_{2},j_{2})$ if and
only if $i_{1}\leq i_{2}$ and $j_{1}\leq j_{2}$. Therefore, $h_{i_{k},i_{l}}x\to x$. Since $(Iso(X),X)$ is proper there is a subnet $\{ h_{i_{k_{m}},i_{l_{m}}} \}$
and some $g\in Iso(X)$ such that $h_{i_{k_{m}},i_{l_{m}}}\to g$. Hence $\{ h_{i_{k_{m}},i_{l_{m}}}y \}$ is a Cauchy net in $X$, therefore for every $\varepsilon '
>0$ there exists an index $m_{0}$ such that
\[
d(g_{i_{k_{m}}}^{-1}g_{i_{l_{m}}}y,g_{i_{k_{n}}}^{-1}g_{i_{l_{n}}}y)<\varepsilon ' \,\,\, \mbox{for every}\,\, m,n\geq m_{0}.
\]
By taking $m=n\geq m_{0}$ it is easy to see that $\{ g_{i_{l_{m}}}y\}$ is a Cauchy net and if we follow the same procedure we can also show that $\{ g_{i_{k_{m}}}y\}$
is also a Cauchy net. Since $d(g_{i_{k_{m}}}y,g_{i_{l_{m}}}y)\geq\varepsilon$ for every index $m$ the net $\{ g_{i}y \}$ has two limit point in the completion
$\widehat{X}$ of $X$, a contradiction to our hypothesis.
\end{proof}

\section{Cauchy-indivisible isometric actions on separable metric spaces}
\textit{In this section $(X,d)$ will denote a separable metric space such that the action $(Iso(X),X)$ is Cauchy-indivisible}. Firstly, we show the adequacy of
sequences in the definition of Cauchy-indivisibility.

\begin{proposition} \label{51}
In the definition of Cauchy-indivisibility for isometric actions nets can be replaced by sequences.
\end{proposition}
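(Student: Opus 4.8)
The plan is to show that the two versions of the definition---one quantifying over nets, one over sequences---are equivalent. One implication is immediate: every sequence is a net, so the net form of Cauchy-indivisibility trivially implies the sequence form. The whole content lies in the converse, and this is exactly where separability of $X$ is used. So I would assume the sequence form and argue by contradiction: suppose there is a net $\{g_i\}_{i\in I}\subset Iso(X)$ with $g_i\to\infty$, with $\{g_ix_0\}$ Cauchy for some $x_0\in X$, but with $\{g_iz\}$ failing to be Cauchy for some $z\in X$. Fix $\varepsilon_0>0$ witnessing the failure, so that for every $i_0\in I$ there are $i,j\geq i_0$ with $d(g_iz,g_jz)\geq\varepsilon_0$. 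The goal is to extract from this net a single sequence $\{g_{i_n}\}$ that simultaneously (a) goes to infinity in $Iso(X)$, (b) keeps $\{g_{i_n}x_0\}$ Cauchy, and (c) keeps $\{g_{i_n}z\}$ non-Cauchy; such a sequence violates the sequence form and yields the contradiction.

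The preliminary fact I would establish is that $Iso(X)$ is second countable. Fixing a countable dense set $D=\{d_1,d_2,\dots\}\subset X$, the map $g\mapsto(gd_1,gd_2,\dots)$ embeds $Iso(X)$ into $X^{\mathbb{N}}$: it is injective because an isometry is determined by its values on $D$, and the inequality $d(gy,hy)\leq 2d(y,d_k)+d(gd_k,hd_k)$ shows that pointwise convergence on $X$ coincides with pointwise convergence on $D$, so the map is a topological embedding. Since $X$ is separable metric, $X^{\mathbb{N}}$ is second countable, hence so is the subspace $Iso(X)$. Now $g_i\to\infty$ means the net has no cluster point, so each $g\in Iso(X)$ has a basic open neighbourhood that the net eventually avoids; by the Lindel\"of property I select countably many such basic sets $\{V_l\}_{l\in\mathbb{N}}$ covering $Iso(X)$, together with indices $\iota_l\in I$ such that $g_i\notin V_l$ whenever $i\geq\iota_l$. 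I also record, from the Cauchy property of $\{g_ix_0\}$, increasing indices $J_n\in I$ with $d(g_ix_0,g_jx_0)<1/n$ for $i,j\geq J_n$.

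With these in hand I would build the sequence two terms at a time. At the $n$-th step let $\beta_n\in I$ be an upper bound of $\{\iota_1,\dots,\iota_{2n},J_{2n}\}$ together with the previously chosen indices (it exists since $I$ is directed); applying the $\varepsilon_0$-witness with $i_0=\beta_n$ produces $i_{2n-1},i_{2n}\geq\beta_n$ with $d(g_{i_{2n-1}}z,g_{i_{2n}}z)\geq\varepsilon_0$. The resulting sequence satisfies (c) because consecutive pairs stay $\varepsilon_0$ apart; it satisfies (b) because for large positions the indices dominate $J_N$, giving $d(g_{i_n}x_0,g_{i_m}x_0)<1/N$; and it satisfies (a) because for each $l$ all but finitely many terms have index $\geq\iota_l$ and hence lie outside $V_l$, so no $g\in V_l$ is a cluster point, and the $V_l$ cover $Iso(X)$. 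This contradicts the sequence form and finishes the argument. The one genuinely delicate point, and the only place separability is needed, is (a): a sequence drawn from a net that goes to infinity need \emph{not} itself go to infinity when $I$ has uncountable cofinality, and it is precisely the Lindel\"of covering by eventually-avoided neighbourhoods that repairs this.
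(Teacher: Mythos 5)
Your proof is correct, and it is worth recording that it differs from the paper's at exactly the step you single out as delicate. The paper also argues by contradiction and also reduces to extracting a single bad sequence from the bad net, but its mechanism is different: it passes to the completion $\widehat{X}$, lets $z=\lim \widehat{f_i}x$, picks two subnets staying $\varepsilon$-apart at $y$, extracts from each a sequence converging at $x$ to $z$, and interleaves them; Cauchyness at $x$ and failure of Cauchyness at $y$ are then immediate, and the divergence $h_n\to\infty$ is asserted to hold ``for the same reason'' as the non-Cauchyness at $y$. That last assertion really only excludes convergent subsequences that meet both interleaved families infinitely often (a subsequence lying eventually in one family is not obviously ruled out, since a sequence drawn from a divergent net need not diverge). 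Your argument never leaves $Iso(X)$: you prove second countability via the embedding $g\mapsto(gd_k)_k$ into $X^{\mathbb{N}}$, convert ``no cluster point of the net'' into a countable family of eventually-avoided basic neighbourhoods by Lindel\"of, and then choose the sequence two terms at a time against upper bounds in the directed set so that divergence, Cauchyness at $x_0$, and non-Cauchyness at $z$ are all enforced by construction. What your route buys is precisely a watertight treatment of the divergence requirement -- the one point the paper glosses over -- at the cost of a slightly longer setup; the paper's route is shorter and exploits the completion, which it needs anyway for the rest of Section 5.
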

\begin{proof}
Assume that if $\{ g_{n}\}$ is a sequence in $Iso(X)$ such that $g_{n}\to\infty$ and $\{ g_{n}x\}$ is a Cauchy sequence in $X$ for some $x\in X$ then $\{ g_{n}x\}$ is
a Cauchy sequence for every $x\in X$. Let $\{ f_{i}\}$ be a net in $Iso(X)$ such that $f_{i}\to\infty$ and $\{ f_{i}x\}$ is a Cauchy net in $X$ for some $x\in X$. We
will show that $\{ f_{i}x\}$ is a Cauchy net in $X$ for every $x\in X$. We argue by contradiction. Suppose that there exists $y\in X$ such that $\{ f_{i}y\}$ is not a
Cauchy net. Hence, there is an $\varepsilon >0$ and subnets $\{ f_{i_{k}} \}$, $\{ f_{i_{l}} \}$ such that $d(f_{i_{k}}y,f_{i_{l}}y)\geq\varepsilon$ for every $k,l$.
Since $\{ f_{i}x\}$ is a Cauchy net in $X$ there is a point $z\in \widehat{X}$ such that $\widehat{f_{i}}x\to z$. Hence, the subnets $\{ \widehat{f_{i_{k}}}x\}$, $\{
\widehat{f_{i_{l}}}x\}$ also converges to $z$. So we may find sequences $\{ \widehat{f_{i_{k_{n}}}}x\}$ and $\{ \widehat{f_{i_{l_{n}}}}x\}$ such that
$\widehat{f_{i_{k_{n}}}}x\to z$ and $\widehat{f_{i_{l_{n}}}}x\to z$. Therefore, $\{ f_{i_{k_{n}}}x \}$ and $\{ f_{i_{l_{n}}}x \}$ are Cauchy sequences in $X$ and
$d(f_{i_{k_{n}}}y,f_{i_{l_{n}}}y )\geq\varepsilon$ for every $n\in\mathbb{N}$. Let $\{ h_{n}\}\subset Iso(X)$ with
\[
h_{4n-3}=f_{i_{k_{2n-1}}},\,\,h_{4n-2}=f_{i_{l_{2n-1}}},\,\,h_{4n-1}=f_{i_{l_{2n}}}\,\,\mbox{and}\,\, h_{4n}=f_{i_{k_{2n}}},
\]
$n=1,2\ldots$. It is easy to see that $\widehat{h_{n}}x\to z$, hence $\{ h_{n}x\}$ is a Cauchy sequence in $X$. Moreover, $\{ h_{n}y\}$ is not a Cauchy sequence in
$X$ since $d(f_{i_{k_{n}}}y,f_{i_{l_{n}}}y )\geq\varepsilon$ for every $n\in\mathbb{N}$ and for the same reason $h_{n}\to\infty$ in $Iso(X)$, which is a contradiction
to our hypothesis.
\end{proof}

\begin{definition} \label{def52}
Fix a dense sequence $D=\{x_{i}\}\subset X$ in $\widehat{X}$. Since the metric $\frac{\widehat{d}}{1+\widehat{d}}$ is an equivalent metric to $\widehat{d}$ on $X$
(also gives the same groups of isometries on $X$ and $\widehat{X}$ and the same Cauchy sequences) we may assume that $\widehat{d}$ is bounded by $1$. We define
$\delta: Iso(\widehat{X})\times Iso(\widehat{X})\to \mathbb{R}^{+}$ by
\[
\delta(f,g)=\sum_{i=1}^{\infty} \frac{1}{2^{i}}\,\, \widehat{d}(fx_{i},gx_{i})
\]
for every $f,g\in Iso(\widehat{X})$. It is easy to see that $\delta$ is a left-invariant metric on $Iso(\widehat{X})$.
\end{definition}

\begin{proposition} \label{prop53}
The uniformity of pointwise convergence, the left uniformity and the uniformity induced by $\delta$ on $Iso(\widehat{X})$ and $Iso(X)$ coincide, independently of
Cauchy-indivisibility.
\end{proposition}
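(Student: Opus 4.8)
The plan is to prove the three uniformities coincide by showing that the $\delta$-uniformity equals each of the other two, carrying out the argument first on $Iso(\widehat{X})$ and then transferring it to $Iso(X)$. Since none of the steps will invoke the standing hypothesis of the section, the qualifier ``independently of Cauchy-indivisibility'' will be automatic; I would note at the end that the only structural input actually used is that the maps are isometries, that $D$ is dense, and that $\widehat{d}\le 1$.

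First I would show that the $\delta$-uniformity coincides with the uniformity of pointwise convergence. The latter has as a subbase the sets $U_{w,\varepsilon}=\{(f,g):\widehat{d}(fw,gw)<\varepsilon\}$, $w\in\widehat{X}$, $\varepsilon>0$, with finite intersections as a base. For the containment ``pointwise $\subseteq$ $\delta$'', given a subbasic $U_{w,\varepsilon}$ I would use density of $D$ to pick $x_k\in D$ with $\widehat{d}(w,x_k)<\varepsilon/4$; since $f,g$ are isometries the triangle inequality yields $\widehat{d}(fw,gw)\le 2\widehat{d}(w,x_k)+\widehat{d}(fx_k,gx_k)$, so $\delta(f,g)<\varepsilon/2^{k+1}$ forces $\widehat{d}(fw,gw)<\varepsilon$, placing a $\delta$-entourage inside $U_{w,\varepsilon}$. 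For the reverse containment, given $\{(f,g):\delta(f,g)<\varepsilon\}$ I would use $\widehat{d}\le 1$ to bound the tail $\sum_{i>N}2^{-i}\widehat{d}(fx_i,gx_i)\le 2^{-N}$; choosing $N$ with $2^{-N}<\varepsilon/2$ and then requiring $\widehat{d}(fx_i,gx_i)<\varepsilon/2$ for $i\le N$ places the basic pointwise entourage $\bigcap_{i=1}^{N}U_{x_i,\varepsilon/2}$ inside the given $\delta$-entourage. These two inclusions give equality of the two uniformities, hence also of the topologies they induce.

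Next I would identify the $\delta$-uniformity with the left uniformity. Since $\delta$ is left-invariant (Definition \ref{def52}), $\delta(f,g)=\delta(e,f^{-1}g)$, so the $\delta$-entourage $\{(f,g):\delta(f,g)<\varepsilon\}$ equals $\{(f,g):f^{-1}g\in B_\delta(e,\varepsilon)\}$, which is precisely a basic left-uniformity entourage attached to the identity neighbourhood $B_\delta(e,\varepsilon)$. By the previous step the $\delta$-topology is the topology of pointwise convergence, i.e.\ the group topology, so the balls $B_\delta(e,\varepsilon)$ form a neighbourhood base at the identity as $\varepsilon\to 0$. Consequently the $\delta$-entourages form a base of the left uniformity, and the two uniformities coincide; combined with the first step, all three uniformities on $Iso(\widehat{X})$ agree.

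Finally I would transfer the result to $Iso(X)$. The only point to check is that $D\subseteq X$ is dense in $X$ as well (a subset of $X$ dense in $\widehat{X}$ is dense in $X$) and that $\widehat{d}$ restricts to $d$ on $X$, so the three arguments above apply verbatim with $X$, $d$ and evaluation at points of $X$ in place of $\widehat{X}$, $\widehat{d}$ and $\widehat{X}$; alternatively the lift $g\mapsto\widehat{g}$ carries each structure on $Iso(X)$ onto the restriction of the corresponding structure on $Iso(\widehat{X})$ to $\widehat{Iso(X)}$, so the equality descends. The main obstacle, and the single place where the isometry hypothesis is essential, is the ``pointwise $\subseteq$ $\delta$'' containment: controlling evaluation at an \emph{arbitrary} point $w\notin D$ by the weighted sum over the countable set $D$ depends crucially on isometry-invariance to approximate $w$ by nearby points of $D$.
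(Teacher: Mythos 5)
Your argument is correct and complete; the paper itself gives no details, deferring entirely to Lemma 2.11 of Hjorth's paper, and your two-step verification (identifying the $\delta$-uniformity with the pointwise uniformity via density of $D$ and the bound $\widehat{d}\le 1$, then with the left uniformity via left-invariance of $\delta$) is exactly the standard argument that reference unfolds to. Nothing to correct.
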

\begin{proof}
The proof is similar to the proof of Lemma 2.11 in \cite{hjorth}.
\end{proof}

\begin{proposition} \label{prop54}
The pointwise closures of $Iso(X)$ in $C(X,X)$ and of $Iso(\widehat{X})$ in $C\widehat{X},\widehat{X})$ endowed with the metric $\delta$ are separable metric spaces.
\end{proposition}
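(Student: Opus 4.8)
The plan is to realise each of the two pointwise closures as a subspace of a countable power of a separable metric space, using the fixed dense sequence $D=\{x_i\}$. Recall that $\widehat{X}$ is separable, since $D$ is a countable dense subset of it, and that by Definition \ref{def52} we may assume $\widehat{d}\leq 1$. The guiding observation is that every map occurring in these closures is continuous, hence completely determined by its values on the dense set $D$; the metric $\delta$ records precisely these values.

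To make this precise, I would first treat the closure of $Iso(\widehat{X})$ in $C(\widehat{X},\widehat{X})$. Equip the countable product $\widehat{X}^{\mathbb{N}}$ with the metric $\rho\big((a_i)_i,(b_i)_i\big)=\sum_{i=1}^{\infty}2^{-i}\,\widehat{d}(a_i,b_i)$, which is well defined because $\widehat{d}\leq 1$ and which induces the product topology. Define the evaluation map $\Phi(h)=(hx_i)_i$ from $C(\widehat{X},\widehat{X})$ into $\widehat{X}^{\mathbb{N}}$. By the very formula of Definition \ref{def52} one has $\rho(\Phi f,\Phi g)=\delta(f,g)$, so $\Phi$ is distance preserving for $\delta$ and $\rho$. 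Moreover $\Phi$ is injective: if $\Phi f=\Phi g$ then $f$ and $g$ agree on the dense set $D$, and since both are continuous they coincide on $\widehat{X}$. This also confirms that $\delta$ is a genuine metric (not merely a pseudometric) on $C(\widehat{X},\widehat{X})$, so that the closures are indeed metric spaces as asserted.

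It then remains to invoke two standard facts. First, a countable product of separable metric spaces is again separable: a countable dense subset of $(\widehat{X}^{\mathbb{N}},\rho)$ is furnished by the eventually constant sequences all of whose entries lie in the countable set $D$. Second, every subspace of a separable metric space is separable. Since $\Phi$ isometrically embeds the pointwise closure of $Iso(\widehat{X})$ (with $\delta$) into $(\widehat{X}^{\mathbb{N}},\rho)$, that closure is separable. The closure of $Iso(X)$ in $C(X,X)$ is handled identically, with $X$ in place of $\widehat{X}$: its elements are continuous selfmaps of $X$, so $\Phi(h)=(hx_i)_i$ lands in $X^{\mathbb{N}}$ (here $x_i\in X$ and $\widehat{d}(fx_i,gx_i)=d(fx_i,gx_i)$), $X^{\mathbb{N}}$ is separable because $X$ is, and the same isometric embedding yields separability.

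The argument is essentially soft, and the only point requiring care is the injectivity of the evaluation map $\Phi$, equivalently the fact that $\delta$ separates points: this is exactly where the density of $D$ in $\widehat{X}$ together with the continuity of the maps is used, and it is what makes testing against the countable set $D$ sufficient (as opposed to testing against all of $\widehat{X}$, whose uncountable power would fail to be metrizable). Note that Proposition \ref{prop53} is not needed for separability itself; it only guarantees that the $\delta$-topology coincides with the topology of pointwise convergence, so that the word ``closure'' is unambiguous here.
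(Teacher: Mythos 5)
Your proof is correct and is essentially the standard argument that the paper delegates to its references (Hjorth's Lemma 2.11 and Bourbaki): evaluation on the fixed dense sequence $D$ gives an isometric embedding of each closure, with the metric $\delta$, into the separable metric space $\widehat{X}^{\mathbb{N}}$ (resp.\ $X^{\mathbb{N}}$) with the corresponding weighted sum metric, and separability follows since subspaces of separable metric spaces are separable. The paper offers no further detail, so your write-up is simply a complete, self-contained version of the intended proof, including the one point that genuinely needs checking (injectivity of the evaluation map via density of $D$ and continuity of the maps).
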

\begin{proof}
It follows easily using the same arguments as in the proof of Lemma 2.11 in \cite{hjorth} and \cite[Ch. X, \S 3 Exercise 6 (b), p. 327]{bour2}.
\end{proof}

The following lemma will be used often in the sequel.

\begin{lemma} \label{lem55}
Let $\{ g_{n}\}$ be a sequence in $Iso(X)$ such that $\{ g_{n}x\}$ is a Cauchy sequence in $X$ for some $x\in X$ and $g_n\to\infty$. Then
\begin{enumerate}
\item[(i)] $\{ g_{n}x_{n}\}$ is a Cauchy sequence for every Cauchy sequence $\{x_{n}\}$ in $X$.
\item[(ii)] If $\{x_{k}\}$ is Cauchy sequence in $X$ then $\widehat{g_{n}}[x_{k}]\to [g_{k}x_{k}]$ in $\widehat{X}$.
\end{enumerate}
\end{lemma}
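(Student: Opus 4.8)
The plan is to first upgrade the one-point Cauchy hypothesis to all points, and then settle the two claims by triangle-inequality estimates in which the isometry property is used to convert or cancel the awkward cross terms.

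First I would record the only substantive input. Since $\{g_n\}$ is a sequence with $g_n\to\infty$ and $\{g_nx\}$ is Cauchy for the given point $x$, Proposition \ref{51} allows the definition of Cauchy-indivisibility to be applied with sequences rather than nets, so that $\{g_ny\}$ is a Cauchy sequence for \emph{every} $y\in X$. Nothing further about the action will be needed.

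For (i), let $\{x_n\}$ be Cauchy and fix $\varepsilon>0$. The idea is to freeze a single base point taken far out in the tail. Choose $N$ with $d(x_n,x_N)<\varepsilon/3$ for all $n\ge N$ and set $p:=x_N$; by the previous paragraph $\{g_np\}$ is Cauchy, so choose $M$ with $d(g_np,g_mp)<\varepsilon/3$ for $n,m\ge M$. For $n,m\ge\max\{N,M\}$ the triangle inequality together with the isometry identities $d(g_nx_n,g_np)=d(x_n,p)$ and $d(g_mp,g_mx_m)=d(p,x_m)$ yields $d(g_nx_n,g_mx_m)<\varepsilon$. Hence $\{g_nx_n\}$, which lies in $X$, is a Cauchy sequence.

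For (ii), write $w:=[x_k]$ and, using (i), $v:=[g_kx_k]$. Each $\widehat{g_n}$ preserves $\widehat{d}$ and extends $g_n$, so for every $n$ I can estimate $\widehat{d}(\widehat{g_n}w,v)\le\widehat{d}(\widehat{g_n}w,\widehat{g_n}x_n)+\widehat{d}(\widehat{g_n}x_n,v)=\widehat{d}(w,x_n)+\widehat{d}(g_nx_n,v)$. The decisive move is the diagonal choice of index: evaluating $\widehat{g_n}$ at the matching point $x_n$ collapses the cross term entirely. Both surviving summands tend to $0$, the first because $x_n\to w$ in $\widehat{X}$ and the second because $g_nx_n\to v$, so $\widehat{g_n}[x_k]=\widehat{g_n}w\to v=[g_kx_k]$. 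The only genuine subtlety, common to both parts, is that each is at bottom an interchange-of-limits statement in two indices; the whole point of the argument is that the metric-preserving property lets one bypass any uniformity hypothesis, by freezing a tail base point in (i) and by the diagonal choice $k=n$ in (ii).
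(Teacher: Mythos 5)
Your proof is correct and follows essentially the same route as the paper's (very terse) argument: part (i) is the triangle inequality after freezing a tail base point $x_{n_0}$ whose $g_n$-orbit is Cauchy by Cauchy-indivisibility, and part (ii) is the analogous estimate with the diagonal evaluation at $x_n$. The only cosmetic remark is that you do not need Proposition \ref{51} here — a sequence is already a net, so the standing Cauchy-indivisibility hypothesis applies to $\{g_n\}$ directly.
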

\begin{proof}
(i) The proof follows by the triangle inequality and the fact that $\{g_{n}x_{n_{0}}\}$ is a Cauchy sequence, for suitable $n_0\in\mathbb{N}$.

(ii) By item (i), $\{g_{k}x_{k}\}$ is a Cauchy sequence in $X$, hence $[g_{k}x_{k}]\in\widehat{X}$. The rest of the proof is similar to that of item (i).
\end{proof}

\begin{corollary} \label{cor56}
If $\{ g_{n}\}$ is a sequence in $Iso(X)$ such that $g_n\to\infty$ and $\{ g_{n}x\}$ is a Cauchy sequence in $X$ for some $x\in X$, then $\{ \widehat{g_{n}}\}$
converges pointwise on $\widehat{X}$ to some $h\in C(\widehat{X},\widehat{X})$ which preserves the metric $\widehat{d}$. In addition, if $\{g_{n}^{-1}y\}$ is a Cauchy
sequence for some $y\in X$, then $\{\widehat{g_{n}}\}$ converges pointwise on $\widehat{X}$ to some $h\in Iso(\widehat{X})$.
\end{corollary}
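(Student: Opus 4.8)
The plan is to construct the limit map $h$ pointwise on $\widehat{X}$ and then read off its properties from those of the approximating isometries $\widehat{g_n}$. First I would note that the hypotheses are precisely those of Lemma \ref{lem55}: $g_n\to\infty$ and $\{g_n x\}$ is Cauchy for some $x\in X$. By Cauchy-indivisibility together with Proposition \ref{51} (which allows sequences in place of nets), $\{g_n y\}$ is then Cauchy for every $y\in X$, hence converges in $\widehat{X}$. More holds on the completion itself: writing an arbitrary $w\in\widehat{X}$ as $w=[x_k]$ for some Cauchy sequence $\{x_k\}\subset X$, Lemma \ref{lem55}(ii) gives $\widehat{g_n}w=\widehat{g_n}[x_k]\to[g_k x_k]$. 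Thus $\{\widehat{g_n}\}$ converges pointwise on all of $\widehat{X}$, and I would set $h(w):=\lim_n\widehat{g_n}w$.

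Next I would check that $h$ preserves $\widehat{d}$, which is immediate from the continuity of $\widehat{d}$ and the fact that each $\widehat{g_n}$ is a $\widehat{d}$-isometry: for $w,w'\in\widehat{X}$, $\widehat{d}(hw,hw')=\lim_n\widehat{d}(\widehat{g_n}w,\widehat{g_n}w')=\widehat{d}(w,w')$. A distance-preserving selfmap is in particular continuous, so $h\in C(\widehat{X},\widehat{X})$, which settles the first assertion.

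For the second assertion, assume in addition that $\{g_n^{-1}y\}$ is Cauchy for some $y\in X$. Since inversion is a homeomorphism of the topological group $Iso(X)$, the hypothesis $g_n\to\infty$ forces $g_n^{-1}\to\infty$, so the sequence $\{g_n^{-1}\}$ satisfies the hypotheses of the first part. Applying it (and using $\widehat{g_n^{-1}}=(\widehat{g_n})^{-1}$) produces a $\widehat{d}$-preserving map $f\in C(\widehat{X},\widehat{X})$ with $(\widehat{g_n})^{-1}w\to f(w)$ for every $w\in\widehat{X}$. It then remains to identify $f$ as a two-sided inverse of $h$, which will give surjectivity of $h$ and hence $h\in Iso(\widehat{X})$.

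I expect this last identification to be the only genuine obstacle, since pointwise convergence need not respect composition. The device I would use is again the isometry property: for fixed $w$,
\[
\widehat{d}\bigl((\widehat{g_n})^{-1}(hw),w\bigr)=\widehat{d}\bigl((\widehat{g_n})^{-1}(hw),(\widehat{g_n})^{-1}(\widehat{g_n}w)\bigr)=\widehat{d}\bigl(hw,\widehat{g_n}w\bigr)\to 0,
\]
so $(\widehat{g_n})^{-1}(hw)\to w$; comparing with $(\widehat{g_n})^{-1}(hw)\to f(hw)$ and invoking uniqueness of limits gives $f(hw)=w$. The symmetric computation yields $h(fw)=w$, so $h$ and $f$ are mutually inverse distance-preserving bijections of $\widehat{X}$, i.e.\ $h\in Iso(\widehat{X})$.
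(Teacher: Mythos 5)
Your proposal is correct and follows essentially the same route as the paper: the paper's one-line proof also defines $h[x_k]:=[g_kx_k]$ via Lemma \ref{lem55}(ii) and leaves the rest implicit. Your additional details --- the metric-preservation check and, for the second assertion, applying the first part to $\{g_n^{-1}\}$ and using the isometry property to identify the limit of $\{(\widehat{g_n})^{-1}\}$ as a two-sided inverse of $h$ --- are exactly the steps the paper's ``immediate consequence'' glosses over, and they are carried out correctly.
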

\begin{proof}
The proof is an immediate consequence of Lemma \ref{lem55} (ii) if we set $h:\widehat{X}\to\widehat{X}$ with $h[x_{k}]:=[g_{k}x_{k}]$ for every
$[x_{k}]\in\widehat{X}$.
\end{proof}

Corollary \ref{cor56} enables the following equivalent expressions of the corresponding sets defined in the introduction:

\begin{Notation} \label{not1}
\begin{align*}
&  \hspace{-5mm} H=\{ h\in C(\widehat{X},\widehat{X})\,|\,\mbox{there exists a sequence}\,\,\{ g_{n}\}\subset Iso(X)\\
&  \hspace{12mm}\mbox{with}\,\, g_{n}\to\infty\,\,\mbox{in}\,\,Iso(X),\,\,\{ g_{n}x\}\,\,\mbox{is a Cauchy sequence}\\
&  \hspace{12mm}\mbox{for some}\,\, x\in X\,\,\mbox{and}\,\,\widehat{g_{n}}\to h \,\,\mbox{in}\,\, C(\widehat{X},\widehat{X})\}.
\end{align*}
\begin{align*}
& X_{l} =\{ y\in\widehat{X}\,|\,\mbox{there exists a sequence}\,\,\{ g_{n}\}\subset Iso(X)\,\, \mbox{with}\\
& \hspace{12mm}g_{n}\to\infty\,\,\mbox{in}\,\, Iso(X),\,\,\mbox{such that}\,\, \{ g_{n}x\}\,\,\mbox{is a Cauchy}\\
& \hspace{12mm}\mbox{sequence for some}\,\, x\in X\,\, \mbox{and}\,\,y=[g_{k}x]\},\\
& \mbox{denotes the set of the limit points of the}\,\,\mbox{action}\,\,(Iso(X),X)\,\,\mbox{in}\,\,\widehat{X}.
\end{align*}
\begin{align*}
&\hspace{-2mm} X_{p}=\{ y\in\widehat{X}\,|\,\mbox{there exists a sequence}\,\,\{ g_{n}\}\subset Iso(X)\,\,\mbox{with}\\
& \hspace{12mm}g_{n}\to\infty\,\,\mbox{in}\,\, Iso(X),\,\,\mbox{such that}\,\, \{ g_{n}x\}\,\,\mbox{and}\,\,\{ g_{n}^{-1}x\}\,\,\mbox{are}\\
& \hspace{12mm}\mbox{Cauchy sequences for some}\,\, x\in X\,\, \mbox{and}\,\, y=[g_{k}x]\},\\
& \mbox{denotes the set of the special}\,\,\mbox{limit points of}\,\,(Iso(X),X)\,\,\mbox{in}\,\,\widehat{X}.
\end{align*}
\end{Notation}

\begin{proposition} \label{prop57}
If $\{g_{n}\}$ is a sequence in $Iso(X)$ such that $g_{n}\to f$ on $X$ for some $f$ in $C(X,X)$, then $\widehat{g_{n}}\to \widehat{f}$ on $\widehat{X}$ and
$\widehat{f}\in Iso(\widehat{X})$.
\end{proposition}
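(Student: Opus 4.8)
The plan is to establish the two assertions in turn. First I would note that each $g_{n}$ preserves $d$, so passing to the pointwise limit in $d(g_{n}x,g_{n}y)=d(x,y)$ shows that $f$ preserves $d$; thus $f$ is a (possibly non-surjective) isometric embedding of $X$, hence uniformly continuous, and extends uniquely to a continuous map $\widehat{f}\colon\widehat{X}\to\widehat{X}$ (cf. \cite[Ch. I, \S 8.5 Theorem 1]{bour1}) which again preserves $\widehat{d}$. To see that $\widehat{g_{n}}\to\widehat{f}$ pointwise on $\widehat{X}$ I would use a triangle-inequality estimate that is insensitive to the behaviour of $\{g_{n}\}$: given $w\in\widehat{X}$ and $\varepsilon>0$, pick $x\in X$ with $\widehat{d}(x,w)<\varepsilon$ and exploit that $\widehat{g_{n}}$ and $\widehat{f}$ preserve $\widehat{d}$ to obtain
\[
\widehat{d}(\widehat{g_{n}}w,\widehat{f}w)\le \widehat{d}(\widehat{g_{n}}w,\widehat{g_{n}}x)+\widehat{d}(\widehat{g_{n}}x,\widehat{f}x)+\widehat{d}(\widehat{f}x,\widehat{f}w)=2\widehat{d}(w,x)+d(g_{n}x,fx).
\]
Since $g_{n}x\to fx$ in $X$ the last term tends to $0$, so $\limsup_{n}\widehat{d}(\widehat{g_{n}}w,\widehat{f}w)\le 2\varepsilon$, and letting $\varepsilon\to0$ gives the first assertion.

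It remains to prove $\widehat{f}\in Iso(\widehat{X})$, that is, that $\widehat{f}$ is surjective; this is the substantive part, and I would dispose of it by distinguishing whether $\{g_{n}\}$ tends to infinity in $Iso(X)$. If it does not, then by definition $\{g_{n}\}$ has a subnet converging to some $g\in Iso(X)$; since the full sequence converges pointwise to $f$ and the pointwise topology on $C(X,X)$ is Hausdorff, this subnet also converges to $f$, so $f=g\in Iso(X)$ and therefore $\widehat{f}=\widehat{g}\in Iso(\widehat{X})$.

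The remaining case is $g_{n}\to\infty$ in $Iso(X)$, and here I expect the crux of the argument. The idea is to produce a Cauchy orbit for the inverse sequence. Since each $g_{n}$ preserves $d$ and $fx\in X$, one has $d(g_{n}^{-1}(fx),x)=d(fx,g_{n}x)\to0$, so $g_{n}^{-1}(fx)\to x$ for every $x\in X$; in particular, fixing any $x_{0}$ and putting $y:=fx_{0}\in X$, the sequence $\{g_{n}^{-1}y\}$ is Cauchy. Meanwhile $\{g_{n}x\}$ is Cauchy for every $x$ because $g_{n}x\to fx$. I would then invoke the ``in addition'' part of Corollary \ref{cor56}: from $g_{n}\to\infty$, the Cauchy orbit $\{g_{n}x\}$, and the Cauchy orbit $\{g_{n}^{-1}y\}$ it follows that $\widehat{g_{n}}$ converges pointwise to some $h\in Iso(\widehat{X})$; by uniqueness of pointwise limits and the first paragraph $h=\widehat{f}$, whence $\widehat{f}\in Iso(\widehat{X})$. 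The main obstacle is exactly this surjectivity step, and the key realisation is that one should feed the inverse sequence $\{g_{n}^{-1}\}$ a point of the image $f(X)\subset X$, for which a convergent---hence Cauchy---orbit is automatic; Cauchy-indivisibility itself is what powers Corollary \ref{cor56} in the background.
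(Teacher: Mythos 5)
Your proposal is correct and follows essentially the same route as the paper: the same case split on whether $g_{n}\to\infty$, and the same key observation that $d(g_{n}^{-1}(fx),x)=d(fx,g_{n}x)\to0$ produces a Cauchy inverse orbit, which is then fed into the machinery of Lemma \ref{lem55}/Corollary \ref{cor56} to obtain surjectivity (the paper exhibits preimages $[g_{k}^{-1}y_{k}]$ directly via Lemma \ref{lem55}(ii), while you invoke the ``in addition'' clause of Corollary \ref{cor56}, which is the same computation packaged as a citation). Your single triangle-inequality estimate for $\widehat{g_{n}}\to\widehat{f}$ covering both cases at once is a minor streamlining of the paper's case-by-case treatment.
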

\begin{proof}
If $\{g_{n}\}$ has a convergent subsequence $\{g_{n_{k}}\}$ to some point $g\in Iso(X)$ then $f=g$ on $X$. We will show that $\widehat{g_{n}}\to \widehat{g}$
pointwise on $\widehat{X}$. Take some $y\in\widehat{X}$ and an $\varepsilon >0$. Then there exists some $x\in X$ such that
$\widehat{d}(x,y)<\frac{1}{3}\,\varepsilon$. Since $g_{n}x\to gx$, there is a positive integer $n_{0}$ such that $d(g_{n}x,gx)<\frac{1}{3}\,\varepsilon$ for every
$n\geq n_{0}$. Hence, $\widehat{d}(\widehat{g_{n}}y,\widehat{g}y)<\varepsilon$, for every $n\geq n_{0}$.

Assume, now, that $g_{n}\to\infty$ in $Iso(X)$. Since $g_{n}\to f$ on $X$, then $\{g_{n}x\}$ is a Cauchy sequence, for every $x\in X$. Hence, by Corollary
\ref{cor56}, $\{ \widehat{g_{n}}\}$ converges pointwise to some $h\in C(\widehat{X},\widehat{X})$. Since $\widehat{f}x=fx=hx$ for every $x\in X$, then $\widehat{f}=h$
on $\widehat{X}$. Note that if $[x_{k}]\in \widehat{X}$ then $\widehat{f}[x_{k}]=h[x_{k}]:=[g_{k}x_{k}]$. Next, we show that $\widehat{f}$ is surjective. Let
$[y_{k}]\in\widehat{X}$. Since $g_{n}\to f$ on $X$ and $fx\in X$ for every $x\in X$, then $g_{n}^{-1}fx\to x$. Hence, by Lemma \ref{lem55} (i) and the
Cauchy-indivisibility of the action $(Iso(X),X)$ we have that $[g^{-1}_{k}y_{k}]\in\widehat{X}$. Therefore, by Lemma \ref{lem55} (ii),
$\widehat{d}(\widehat{f}[g^{-1}_{k}y_{k}],[y_{k}])=\lim_{n} \widehat{d}(\widehat{g_{n}}[g^{-1}_{k}y_{k}],[y_{k}])=\widehat{d}([g_{k}g^{-1}_{k}y_{k}],[y_{k}])
=\widehat{d}([y_{k}],[y_{k}])=0$. This finishes the proof.
\end{proof}

With the notation established in the introduction, we have

\begin{proposition} \label{prop58}
The set $E$ is
\begin{enumerate}
\item[(i)] the union $\widehat{Iso(X)}\cup H$,
\item[(ii)] complete with respect to the uniformity of pointwise convergence on $\widehat{X}$, and
\item[(iii)] a semigroup, the Ellis semigroup of $\widehat{Iso(X)}$ in $C(\widehat{X},\widehat{X})$, i.e. the pointwise closure of
 $\widehat{Iso(X)}$ in $C(\widehat{X},\widehat{X})$.
\end{enumerate}
\end{proposition}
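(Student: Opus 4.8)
The plan is to reduce every statement to sequences and then to exploit that, the acting maps being isometries, any pointwise limit of them preserves $\widehat{d}$ and is thus automatically continuous. Concretely, by Proposition \ref{prop54} the pointwise closure of $\widehat{Iso(X)}$ is a separable metric space under $\delta$, and by Proposition \ref{prop53} the $\delta$-uniformity coincides with that of pointwise convergence; so I may always work with sequences in place of nets. For (i), the inclusion $\widehat{Iso(X)}\cup H\subseteq E$ is immediate, since $\widehat{Iso(X)}$ lies in its own closure and each $h\in H$ is by definition a pointwise limit $\widehat{g_{n}}\to h$. For the reverse inclusion I would take $h\in E$ together with a sequence $\{g_{n}\}\subset Iso(X)$ with $\widehat{g_{n}}\to h$ pointwise, and argue by a dichotomy. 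If $\{g_{n}\}$ has a subsequence $g_{n_{k}}\to g$ in $Iso(X)$, then Proposition \ref{prop57} gives $\widehat{g_{n_{k}}}\to\widehat{g}$, so $h=\widehat{g}\in\widehat{Iso(X)}$. Otherwise $\{g_{n}\}$ has no convergent subsequence, which in the metrizable group $Iso(X)$ forces $g_{n}\to\infty$; then for each $x\in X$ the sequence $g_{n}x=\widehat{g_{n}}x$ converges in $\widehat{X}$ and is therefore Cauchy in $X$, so $h$ meets the defining condition of $H$. This gives $E=\widehat{Iso(X)}\cup H$.

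For (ii), let $\{h_{n}\}\subset E$ be Cauchy for pointwise convergence. For every $y\in\widehat{X}$ the sequence $\{h_{n}y\}$ is Cauchy in the complete space $\widehat{X}$, hence converges, and $h(y):=\lim_{n}h_{n}y$ defines a map $h:\widehat{X}\to\widehat{X}$. By (i) and Corollary \ref{cor56} each $h_{n}$ preserves $\widehat{d}$ (the lifts in $\widehat{Iso(X)}$ are isometries of $\widehat{X}$, and elements of $H$ preserve $\widehat{d}$ by Corollary \ref{cor56}); passing to the limit and using continuity of $\widehat{d}$, the map $h$ preserves $\widehat{d}$ too, so it is continuous and $h\in C(\widehat{X},\widehat{X})$ with $h_{n}\to h$ pointwise. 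As $E$ is a pointwise closure, it is closed in $C(\widehat{X},\widehat{X})$, so $h\in E$. Since the uniformity is metrizable by Proposition \ref{prop53}, this sequential completeness is completeness; equivalently, the same metric-preservation argument shows $E$ is closed in the complete product $\widehat{X}^{\widehat{X}}$, whence $E$ is complete.

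For (iii) I would run the standard Ellis-semigroup argument, resting on two facts about the pointwise topology: left composition by a fixed continuous map is pointwise continuous, and right composition (precomposition) by an arbitrary map is always pointwise continuous, since $(f_{i}\circ\psi)(x)=f_{i}(\psi(x))$. Fix $h\in E$, say $h=\lim\widehat{b_{m}}$ with $b_{m}\in Iso(X)$. For $a\in Iso(X)$, continuity of $\widehat{a}$ yields $\widehat{a}\circ h=\lim(\widehat{a}\circ\widehat{b_{m}})=\lim\widehat{ab_{m}}\in E$, so $\widehat{Iso(X)}\circ h\subseteq E$. Now for $f\in E$, say $f=\lim\widehat{a_{n}}$, precomposition with $h$ gives $f\circ h=\lim(\widehat{a_{n}}\circ h)$, a pointwise limit of elements of $E$ which, being a composition of $\widehat{d}$-preserving maps, is itself $\widehat{d}$-preserving, hence continuous and in the closed set $E$. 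Thus $E\circ E\subseteq E$; as the identity lies in $\widehat{Iso(X)}\subseteq E$, the set $E$ is a semigroup, and by construction it is the pointwise closure of $\widehat{Iso(X)}$, i.e. its Ellis semigroup.

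The main obstacle is not a single computation but a point common to all three parts: I must ensure that the pointwise limits of the $\widehat{g_{n}}$ remain continuous self-maps of $\widehat{X}$ and genuinely fall back into $E$ rather than merely into $\widehat{X}^{\widehat{X}}$. This is precisely where the isometric hypothesis is indispensable, via the metric preservation of Corollary \ref{cor56}, and where the reduction from nets to sequences, legitimated by the separability of Proposition \ref{prop54}, does the necessary bookkeeping.
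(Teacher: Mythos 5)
Your proof is correct. For part (i) you follow essentially the same route as the paper: the forward inclusion is immediate, and for the reverse inclusion you use metrizability (Propositions \ref{prop53} and \ref{prop54}) to pass to sequences and then run the dichotomy ``convergent subsequence in $Iso(X)$ versus $g_n\to\infty$'', invoking Proposition \ref{prop57} in the first case and the definition of $H$ in the second; this is exactly the paper's argument, including the observation that in a metrizable group absence of convergent subsequences is the same as absence of convergent subnets. Where you genuinely diverge is in parts (ii) and (iii): the paper disposes of both by citing Lemmata 2.10 and 2.11 of \cite{hjorth} together with the remark that a sequence in $Iso(X)$ is left-Cauchy iff all its orbits are Cauchy, whereas you give self-contained arguments --- for (ii), that a pointwise Cauchy family of $\widehat{d}$-preserving maps converges pointwise in the complete space $\widehat{X}$ to a $\widehat{d}$-preserving (hence continuous) limit, so that $E$ is closed in the complete product $\widehat{X}^{\widehat{X}}$; for (iii), the standard Ellis argument using continuity of left composition by a fixed continuous map and of precomposition by an arbitrary map, with metric preservation guaranteeing that the limits stay in $C(\widehat{X},\widehat{X})$. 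Your version is longer but has the advantage of making explicit exactly where the isometric hypothesis enters (every element of $E=\widehat{Iso(X)}\cup H$ preserves $\widehat{d}$ by Corollary \ref{cor56}, which is what keeps pointwise limits continuous), while the paper's version is shorter at the cost of outsourcing these verifications.
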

\begin{proof}
(i) Take  a sequence $\{\widehat{g_{n}}\}$ in $\widehat{Iso(X)}$ such that $\widehat{g_{n}}\to h$ for some $h\in C(\widehat{X},\widehat{X})$. If $\{g_{n}\}$ has a
convergent subsequence  to some $g\in Iso(X)$ then, by Proposition \ref{prop57}, $h=\widehat{g}\in\widehat{Iso(X)}$. Let $g_{n}\to\infty$ in $Iso(X)$ and take some
$x\in X$. Since $\widehat{g_{n}}x\to hx$, then $\{g_{n}x\}$ is a Cauchy sequence in $X$ therefore $h\in H$.  Items (ii) and (iii) follow from Lemmata 2.10 and 2.11 in
\cite{hjorth} by noticing that a sequence $\{ g_n\}$ in $Iso(X)$ is Cauchy with respect to the left uniformity of $Iso(X)$ if and only if $\{ g_nx\}$ is Cauchy in $X$
for every $x\in X$.
\end{proof}

\begin{remark} \label{rem59}
As the example in \S 6 shows, the Ellis semigroup $E$ is not in general a group. However
\end{remark}

\begin{proposition} \label{prop510}
The Ellis semigroup $E$ is a group if and only if $X_{l}=X_{p}$.
\end{proposition}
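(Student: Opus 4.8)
The plan is to prove the chain of equivalences
\[
E \text{ is a group} \iff H\subseteq Iso(\widehat{X}) \iff X_{l}=X_{p},
\]
exploiting that, by Proposition \ref{prop58}, $E=\widehat{Iso(X)}\cup H$ with $\widehat{Iso(X)}\subset Iso(\widehat{X})$, and that by Corollary \ref{cor56} every $h\in H$ is a $\widehat{d}$-preserving selfmap of $\widehat{X}$, hence injective; such an $h$ is surjective exactly when $h\in Iso(\widehat{X})$. Throughout I would invoke Proposition \ref{prop54} to regard $E$ as a separable metric space, so that sequences suffice, and I would use repeatedly that $Iso(\widehat{X})$ carries the topology of pointwise convergence, under which it is a topological group, so that inversion is continuous.

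First I would settle $E\text{ group}\iff H\subseteq Iso(\widehat{X})$. If $E$ is a group, then each $h\in H\subset E$ has a two-sided inverse in $E$; since the operation is composition and the identity of $E$ is $\mathrm{id}_{\widehat{X}}$, this forces $h$ to be a bijection, and being $\widehat{d}$-preserving it lies in $Iso(\widehat{X})$. Conversely, if $H\subseteq Iso(\widehat{X})$, then $E\subseteq Iso(\widehat{X})$ is a submonoid, and it is closed under inversion: writing $h=\lim_{n}\widehat{g_{n}}$ pointwise with $h\in Iso(\widehat{X})$, this convergence takes place in the group $Iso(\widehat{X})$, so continuity of inversion gives $\widehat{g_{n}^{-1}}=(\widehat{g_{n}})^{-1}\to h^{-1}$ pointwise, placing $h^{-1}$ in the pointwise-closed set $E$. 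Hence $E$ is a group.

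The forward implication $H\subseteq Iso(\widehat{X})\Rightarrow X_{l}=X_{p}$ is immediate, since then $H=H\cap Iso(\widehat{X})$ and the two generating sets $X_{l}=\{hx\mid h\in H,\ x\in X\}$ and $X_{p}=\{hx\mid h\in H\cap Iso(\widehat{X}),\ x\in X\}$ coincide. The reverse implication is the heart of the matter and the step I expect to be the main obstacle: from the equality of the \emph{sets} $X_{l}=X_{p}$ I must recover surjectivity of \emph{each individual} $h\in H$, whereas a priori different points of $h(X)$ could be represented by different surjective isometries.

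The device that breaks this is an observation I would isolate as a lemma: if $\eta\in E$ satisfies $\eta x_{0}\in X$ for some $x_{0}\in X$, then $\eta\in Iso(\widehat{X})$. Indeed, writing $\eta=\lim_{n}\widehat{g_{n}}$ with $g_{n}\to\infty$ and $g_{n}x_{0}\to\eta x_{0}=:x_{1}\in X$ (all terms lying in $X$), the isometry relation gives $\widehat{d}(g_{n}^{-1}x_{1},x_{0})=\widehat{d}(x_{1},g_{n}x_{0})\to 0$, so $\{g_{n}^{-1}x_{1}\}$ is Cauchy, and Corollary \ref{cor56} yields $\eta\in Iso(\widehat{X})$. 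Granting this, assume $X_{l}=X_{p}$ and fix $h\in H$ together with any $x_{0}\in X$. Then $hx_{0}\in X_{l}=X_{p}$, so $hx_{0}=fz$ for some $f\in H\cap Iso(\widehat{X})$ and $z\in X$. Since $f\in Iso(\widehat{X})$, continuity of inversion as above places $f^{-1}$ in $E$, whence $\psi:=f^{-1}h\in E$ by Proposition \ref{prop58}(iii), and $\psi x_{0}=f^{-1}(fz)=z\in X$. The lemma gives $\psi\in Iso(\widehat{X})$, and therefore $h=f\psi\in Iso(\widehat{X})$. Thus $H\subseteq Iso(\widehat{X})$, which closes the chain of equivalences.
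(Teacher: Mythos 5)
Your proof is correct, and its skeleton agrees with the paper's: both rest on the decomposition $E=\widehat{Iso(X)}\cup H$ from Proposition \ref{prop58}(i) and on Corollary \ref{cor56}, and your forward implication (an invertible element of the compositional semigroup $E$ is a bijection, hence lies in $Iso(\widehat{X})$, whence $X_l=X_p$) is essentially the paper's. Where you genuinely depart is in the converse. The paper, given $h=\lim\widehat{g_n}\in H$ and $[g_nx]\in X_l=X_p$, concludes at once that $\{g_n^{-1}x\}$ is Cauchy; but membership in $X_p$ only supplies \emph{some} witnessing sequence, not the given one, which is precisely the quantifier issue you flag. Your repair is sound and fills this in: the lemma that any $\eta\in E$ sending some point of $X$ into $X$ lies in $Iso(\widehat{X})$ (via $\widehat{d}(g_n^{-1}x_1,x_0)=\widehat{d}(x_1,g_nx_0)\to 0$ and the second part of Corollary \ref{cor56}), applied to $\psi=f^{-1}h$ where $f\in H\cap Iso(\widehat{X})$ witnesses $hx_0\in X_p$, and then $h=f\psi\in Iso(\widehat{X})$. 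The auxiliary facts you invoke all check out: $f^{-1}\in E$ because inversion is continuous on $Iso(\widehat{X})$ in the pointwise topology (so $\widehat{g_n^{-1}}\to f^{-1}$ lands in the pointwise closure), and $\psi\in E$ by the semigroup property of Proposition \ref{prop58}(iii). The cost of your route is the extra lemma and the intermediate condition $H\subseteq Iso(\widehat{X})$; the benefit is that the converse becomes airtight where the paper's is elliptical. Note also that your lemma is just the observation ``$g_ix\to y\in X$ implies $g_i^{-1}y\to x$'' that the paper itself exploits in Proposition \ref{prop44}, so the repair stays entirely within the paper's toolkit.
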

\begin{proof}
Assume that $E$ is a group and let $y\in X_{l}$. Hence, there is a sequence $\{ g_{n}\}\subset Iso(X)$ with $g_{n}\to\infty$ in $Iso(X)$ and a map $h\in
C(\widehat{X},\widehat{X})$ such that $\widehat{g_{n}}\to h$ pointwise on $\widehat{X}$ and $y=hx$ for some $x\in X$. Since $E$ is a group then $h$ has an inverse
$h^{-1}$. Thus $\widehat{g_{n}}^{-1}\to h^{-1}$. The last implies that $\{ g_{n}^{-1}x\}$ is a Cauchy sequence in $X$, therefore $y\in X_{p}$.

To show the converse implication, assume that $X_{l}=X_{p}$ and take some $h\in E$. By Proposition \ref{prop58} (i), $h\in\widehat{Iso(X)}\cup H$. So, if
$h\in\widehat{Iso(X)}$ obviously it has an inverse. Assume that $h\in H$. Hence, there is a sequence $\{ g_{n}\}\subset Iso(X)$ with $g_{n}\to\infty$ in $Iso(X)$ such
that $\widehat{g_{n}}\to h$ pointwise on $\widehat{X}$. So $[g_{n}x]\in X_{l}$ for every $x\in X$. But $X_{l}=X_{p}$, hence $\{ g_{n}^{-1}x\}$ is a Cauchy sequence.
Applying Corollary \ref{cor56}, $h\in Iso(\widehat{X})$ so it has an inverse in $E$.
\end{proof}

\begin{lemma} \label{lem511}
The set $X\cup X_{l}$ is $E$-invariant.
\end{lemma}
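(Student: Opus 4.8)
The plan is to exploit the two structural facts about $E$ already established in Proposition \ref{prop58}: that $E=\widehat{Iso(X)}\cup H$ (item (i)) and that $E$ is a semigroup under composition of maps (item (iii)), together with the description $X_{l}=\{hx\,|\,h\in H,\,x\in X\}$ recorded in the introduction and justified by Corollary \ref{cor56}. I want to show that for every $f\in E$ and every $y\in X\cup X_{l}$ one has $fy\in X\cup X_{l}$, and I would organize the verification according to whether $y$ lies in $X$ or in $X_{l}$.

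First I would treat the case $y\in X$. If $f=\widehat{g}$ lies in $\widehat{Iso(X)}$, then $fy=\widehat{g}y=gy\in X$, since $g$ is a self-map of $X$. If instead $f\in H$, then $fy\in X_{l}$ is immediate from the very definition $X_{l}=\{hx\,|\,h\in H,\,x\in X\}$. So in either situation $fy\in X\cup X_{l}$.

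The substantive case is $y\in X_{l}$. Here I would write $y=hx$ for some $h\in H$ and $x\in X$, so that $fy=f(h(x))=(f\circ h)(x)$. The key point is that $E$ is a semigroup, whence $f\circ h\in E=\widehat{Iso(X)}\cup H$. If $f\circ h=\widehat{g'}$ for some $g'\in Iso(X)$, then $fy=(f\circ h)(x)=g'x\in X$; if instead $f\circ h\in H$, then $fy=(f\circ h)(x)\in X_{l}$, again by the description of $X_{l}$. Either way $fy\in X\cup X_{l}$, which completes the argument.

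I do not expect a genuine obstacle here: the entire content reduces to the closure of $E$ under composition and the two-piece decomposition $E=\widehat{Iso(X)}\cup H$, both of which are already in hand. The only point requiring a moment's care is to record explicitly that the semigroup operation on $E\subset C(\widehat{X},\widehat{X})$ is ordinary composition of maps, so that the identity $fy=(f\circ h)x$ is valid and $f\circ h$ may legitimately be inserted into the dichotomy $E=\widehat{Iso(X)}\cup H$.
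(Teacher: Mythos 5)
Your argument is correct, but it is organized differently from the paper's. The paper does not invoke the semigroup property of $E$ at all: it works directly at the level of sequences, writing a point of $X_{l}$ as $[f_{n}x]$ and $h\in H$ as the pointwise limit of $\widehat{g_{n}}$ with $g_{n}\to\infty$, then using Corollary \ref{cor56} to compute $h[f_{n}x]=[g_{n}f_{n}x]$ and splitting according to whether $\{g_{n}f_{n}\}$ has a convergent subsequence in $Iso(X)$ (in which case the Cauchy sequence $\{g_{n}f_{n}x\}$ converges in $X$, so the image lies in $X$) or $g_{n}f_{n}\to\infty$ (in which case the image lies in $X_{l}$ by definition). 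In effect the paper re-derives, by hand, exactly the one instance of closure under composition that is needed. You instead treat Proposition \ref{prop58} as a black box: the decomposition $E=\widehat{Iso(X)}\cup H$ from item (i) and the semigroup property from item (iii) reduce everything to the formal identity $f(hx)=(f\circ h)(x)$ and the dichotomy $f\circ h\in\widehat{Iso(X)}$ or $f\circ h\in H$, which is precisely the paper's ``subconverges or diverges'' dichotomy in disguise. Your route is shorter and makes transparent that the lemma is a formal consequence of the already-established structure of $E$; the paper's route is more self-contained and keeps explicit track of the witnessing sequences, which matches the style of the surrounding arguments. The only points you should make sure are genuinely available are the equivalence of the two descriptions of $X_{l}$ (the set-theoretic one from the introduction and the sequence-based one in the Notation following Corollary \ref{cor56} --- the paper asserts this equivalence, so you may use it) and the fact that the semigroup operation on $E$ is composition of maps, which you correctly flag; both are in place before Lemma \ref{lem511}, so there is no circularity.
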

\begin{proof}
It is easy to verify that $X$ and $X_{l}$ are $\widehat{Iso(X)}$-invariant. We will show that are also $H$-invariant. Let $h\in H$ and $x\in X$. By the definition of
$H$ there is some sequence $\{g_{n}\}$ in $Iso(X)$ such that $g_{n}\to\infty$ in $Iso(X)$ and $\widehat{g_{n}}\to h$ pointwise on $\widehat{X}$. If $[f_{n}x]\in
X_{l}$, for some sequence $\{f_{n}\}\subset Iso(X)$ and $x\in X$ then, by Corollary \ref{cor56}, $h[f_{n}x]=[g_{n}f_{n}x]$. If the sequence $\{g_{n}f_{n}\}$ has a
convergent subsequence in $Iso(X)$ then the Cauchy sequence $\{g_{n}f_{n}x\}$ has a convergent subsequence in $X$, so it converges in $X$. So
$h[f_{n}x]=[g_{n}f_{n}x]\in X$. Otherwise $g_{n}f_{n}\to\infty$ and $h[f_{n}x]=[g_{n}f_{n}x]\in X_{l}$.
\end{proof}

\begin{theorem} \label{th512}
The set $X\cup X_{p}$ is the maximal subset of $X\cup X_{l}$ that contains $X$ such that the map
\[
\omega :E\times (X\cup X_{p})\to (X\cup X_{p})\times \widehat{X}
\]
with $\omega (f,y)=(y,fy)$, $f\in E$ and $y\in X\cup X_{p}$ is proper.
\end{theorem}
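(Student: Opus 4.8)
The plan is to establish the statement in two halves: that the restricted map $\omega$ on $E\times(X\cup X_p)$ is proper, and that $X\cup X_p$ is \emph{maximal} with this property. Since $E$ and $X\cup X_l\subset\widehat X$ are separable metric spaces and $E$ is complete (Propositions \ref{prop54} and \ref{prop58}), I would work entirely with sequences and use the criterion that a continuous map of metric spaces is proper exactly when every sequence in the domain whose image converges in the codomain has a subsequence converging in the domain. As every $f\in E$ preserves $\widehat d$ (Corollary \ref{cor56}), for any sequence with $y_n\to y$ in $X\cup X_p$ and $f_ny_n\to z$ one has $\widehat d(f_ny,f_ny_n)=\widehat d(y,y_n)\to 0$, so $f_ny\to z$; this lets me replace the moving $y_n$ by the fixed limit $y$. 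Properness thus reduces to the core claim: \emph{if $f_n\in E$, $y\in X\cup X_p$ and $f_ny\to z$ in $\widehat X$, then $\{f_n\}$ has a subsequence converging in $E$} (the limit $f$ then satisfies $fy=z$, placing $(f,y)$ in the relevant preimage).

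To prove the core claim I would first dispose of the case $y\in X_p\setminus X$. By the definition of $X_p$ in Notation \ref{not1} together with Corollary \ref{cor56}, such a $y$ equals $\gamma x_0$ with $x_0\in X$ and $\gamma=\lim\widehat{g_n}\in H\cap Iso(\widehat X)$, where $\{g_nx_0\}$, $\{g_n^{-1}x_0\}$ are Cauchy and $g_n\to\infty$; since then also $g_n^{-1}\to\infty$, Cauchy-indivisibility and Corollary \ref{cor56} applied to $\{g_n^{-1}\}$ give $\gamma^{-1}\in E$. Replacing $f_n$ by $\tilde f_n:=f_n\gamma\in E$ turns the hypothesis into $\tilde f_nx_0\to z$ with $x_0\in X$, and a convergent subsequence of $\{\tilde f_n\}$ produces one of $f_n=\tilde f_n\gamma^{-1}$ by continuity of right multiplication. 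So assume $y=x_0\in X$. Using the density of $\widehat{Iso(X)}$ in the pointwise topology of $E$ (Proposition \ref{prop58}(iii)), I would choose $g_n\in Iso(X)$ with $\widehat d(\widehat{g_n}x_0,f_nx_0)\to 0$ and $\delta(\widehat{g_n},f_n)\to 0$, so that $g_nx_0\to z$ and any $\delta$-limit of $\{\widehat{g_n}\}$ is also one of $\{f_n\}$. If $\{g_n\}$ subconverges in $Iso(X)$, Proposition \ref{prop57} yields a convergent subsequence of $\{\widehat{g_n}\}$, hence of $\{f_n\}$. Otherwise $g_n\to\infty$, and since $\{g_nx_0\}$ is Cauchy, Cauchy-indivisibility makes $\{g_nx\}$ Cauchy for every $x\in X$, whence $\widehat{g_n}\to h\in E$ by Corollary \ref{cor56} and so $f_n\to h$. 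This settles properness.

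For maximality I would show that a single point $w\in X_l\setminus X_p$ already destroys properness, so no subset of $X\cup X_l$ strictly larger than $X\cup X_p$ (and containing $X$) can work. Write $w=[p_kx_0]=hx_0$ with $p_n\to\infty$, $\{p_nx_0\}$ Cauchy and $h=\lim\widehat{p_n}\in H$; here $h\notin Iso(\widehat X)$, for otherwise the argument above would make $\{p_n^{-1}x_0\}$ Cauchy and put $w$ into $X_p$. As an escaping test sequence I take $f_n:=\widehat{p_n}^{-1}=\widehat{p_n^{-1}}\in E$. The isometry computation $\widehat d(f_nw,x_0)=\widehat d(\widehat{p_n}^{-1}hx_0,x_0)=\widehat d(hx_0,\widehat{p_n}x_0)\to 0$ shows $f_nw\to x_0$, so $(w,f_nw)$ remains in the compact set $\{w\}\times(\{f_nw\}\cup\{x_0\})$. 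Yet $\{f_n\}$ has no subsequence converging in $E$: such a subsequence would force $\{p_{n_k}^{-1}x_0\}$ to be Cauchy, and combined with $p_{n_k}\to\infty$ and $w=[p_{n_k}x_0]$ this would exhibit $w$ as an element of $X_p$, a contradiction. Hence the preimage of a compact set is non-compact on $E\times Y$ for any $Y\ni w$, and $\omega$ fails to be proper there.

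The main obstacle is the core claim of the second paragraph. Because $E$ and $\widehat X$ are complete but not locally compact, boundedness of the orbits $\{f_nx_i\}$ carries no information, and relative compactness in $E$ must be wrung out of Cauchy-indivisibility itself. The two delicate manoeuvres are the passage between $E$ and $Iso(X)$ via the density approximation, which is what makes the dichotomy ``subconverges in $Iso(X)$ versus tends to infinity'' available, and, on the maximality side, the observation that the inverses $\widehat{p_n}^{-1}$ simultaneously drag $w$ back near $x_0$ and diverge in $E$ precisely because $w\notin X_p$.
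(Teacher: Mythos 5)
Your proposal is correct, and its overall skeleton --- reduce properness to emptiness of the limit sets $L(y)$ via the isometry trick, split on $y\in X$ versus $y\in X_p\setminus X$, and kill properness at any $w\in X_l\setminus X_p$ by showing that the sequence $\widehat{p_n}^{-1}$ drags $w$ back to $x_0$ while having no convergent subsequence in $E$ --- coincides with the paper's. The genuine divergence is in the case $y\in X_p\setminus X$. The paper keeps the base point $y=h_1x$ and runs a diagonal extraction on the double limit $f_n[p_kx]\to[z_k]$ to manufacture a Cauchy sequence $\{f_{n_i}p_{k_i}x\}$, and then computes $\widehat{f_{n_i}}\to h_2h_1^{-1}$ directly; you instead observe that the isometry $\gamma\in H\cap Iso(\widehat{X})$ representing $y$ has its inverse in $E$ (since $g_n^{-1}\to\infty$, so Cauchy-indivisibility and Corollary \ref{cor56} apply to $\{g_n^{-1}\}$) and right-translate by $\gamma$ to reduce to a base point in $X$. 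This is shorter and cleaner, and it also lets you treat an arbitrary sequence in $E$ in the base case uniformly through the density approximation, rather than the paper's subcase split between $\widehat{Iso(X)}$ and $H$. Two small points deserve an explicit line in a written version: first, the identification of $\lim\widehat{g_n}^{-1}$ with $\gamma^{-1}$ (immediate from $\widehat{d}(\widehat{g_n}^{-1}w,\gamma^{-1}w)=\widehat{d}(w,\widehat{g_n}\gamma^{-1}w)\to 0$, but it is exactly what makes $f_{n_k}=\tilde{f}_{n_k}\circ\gamma^{-1}$ converge in the semigroup $E$); second, the simultaneous choice of $g_n$ with $\widehat{d}(\widehat{g_n}x_0,f_nx_0)\to 0$ and $\delta(\widehat{g_n},f_n)\to 0$, which is legitimate because a basic pointwise neighborhood of $f_n$ may include the extra point $x_0$ alongside enough points of $D$ to control $\delta$ --- this is where you dispense with the paper's auxiliary sequence $x_{i_n}\to y$ and its estimate $\delta(\widehat{g_n},f_n)<\tfrac{1}{i_n2^{i_n}}$. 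The maximality half is the paper's argument in contrapositive form and is complete as stated.
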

\begin{proof}
We firstly show that the map $\omega :E\times (X\cup X_{p})\to (X\cup X_{p})\times \widehat{X}$ is proper. Since the evaluation map $E\times (X\cup
X_{p})\to\widehat{X}$ is isometric and action-like, according to \S 2, it suffices to show that the limit sets $L(x)$ are empty for every $x\in X\cup X_{p}$. Let $\{
f_{n}\}$ be a sequence in $E$ such that $f_{n}y\to z$ for some $y\in X\cup X_{p}$ and $z:=[z_k]\in \widehat{X}$.

\noindent {\it Case I.} Assume that $y\in X$. If $\{ f_{n}\}$ has a subsequence $\{ f_{n_{k}}\}$ in $\widehat{Iso(X)}$ then either the restriction of $\{ f_{n_{k}}\}$
on $X$ has a convergent subsequence in $Iso(X)$ hence, by Proposition \ref{prop57}, the sequence $\{ f_{n_{k}}\}$ converges pointwise to some point of
$\widehat{Iso(X)}\subset E$, or $f_n\to\infty$ in $Iso(X)$. In this case, since $\{f_{n}y\}$ is a Cauchy sequence in $X$, the sequence $\{ f_{n}\}$ converges
pointwise to some point of $H\subset E$ by Corollary \ref{cor56}.

Assume, now, that $\{ f_{n}\}$ is in $H$ and consider the dense sequence $D=\{x_{i}\}$ in $X$ which we used to define the metric $\delta$; cf. Definition \ref{def52}.
So, there is a sequence $\{x_{i_{n}}\}$ in $D$ such that $x_{i_{n}}\to y$. By the definition of $H$ and Proposition \ref{prop53}, there is a sequence $\{g_{n}\}$ in
$Iso(X)$  such that
\begin{eqnarray} \label{eq51}
\delta(\widehat{g_{n}}, f_{n})<\frac{1}{i_{n}2^{i_n}}.
\end{eqnarray}
Hence, using the form of the metric $\delta$, we conclude that
\[
\widehat{d}(\widehat{g_{n}}x_{i_{n}},f_{n}x_{i_{n}})<\frac{1}{i_{n}}.
\]
Moreover,
\[
\begin{split}
\widehat{d}(\widehat{g_{n}}y,z)&\leq \widehat{d}(\widehat{g_{n}}x_{i_{n}},f_{n}x_{i_{n}}) + \widehat{d}(f_{n}x_{i_{n}},f_{n}y) + \widehat{d}(f_{n}y,z)\\
                               &=\widehat{d}(\widehat{g_{n}}x_{i_{n}},f_{n}x_{i_{n}}) + \widehat{d}(x_{i_{n}},y) + \widehat{d}(f_{n}y,z).
\end{split}
\]
Therefore, $g_{n}y\to z$. Arguing as in the beginning of the proof, $\{ g_{n}\}$ has a convergent subsequence to a point of $E$, hence by equation \ref{eq51}, the
same holds for the sequence $\{ f_{n}\}$.

\medskip \noindent {\it Case II.} Assume that $y\in X_{p}$. Hence, there exist a sequence $\{ p_{k}\}\subset Iso(X)$ with $p_{k}\to\infty $ in $Iso(X)$, an isometry
$h_{1}\in Iso(\widehat{X})$ such that $\widehat{p_{k}}\to h_{1}$ pointwise on $\widehat{X}$ and $h_{1}x:=[p_{k}x]=y$ for some $x\in X$. If $\{ f_{n}\}$ has a
subsequence $\{ f_{n_{k}}\}$ in $\widehat{Iso(X)}$ then either the restriction of $\{ f_{n_{k}}\}$ on $X$ has a convergent subsequence in $Iso(X)$ hence, by
Proposition \ref{prop57}, the sequence $\{ f_{n_{k}}\}$ converges pointwise to some point of $\widehat{Iso(X)}\subset E$, or $f_{n}\to\infty$ in $Iso(X)$. If the
later holds, then we will show that there is a Cauchy sequence of the form $\{f_{n_{i}}p_{k_{i}}x\}$ in $X$ for some subsequences $\{f_{n_{i}}\}$ and $\{p_{k_{i}}\}$
of  $\{f_{n}\}$ and $\{p_{k}\}$ respectively (the problem is that we do not know if $\{f_{n}x\}$ or $\{f_{n}p_{n}x\}$ is a Cauchy sequence in $X$ for some $x\in X$).

Let $i$ be a positive integer. Since $f_{n}[p_{k}x]\to z$ and $z:=[z_k]\in \widehat{X}$, there is a positive integer $n_{0}$ that depends only on $i$ such that
\[
\widehat{d}(f_{n}[p_{k}x],[z_k])<\frac{1}{i}
\]
for every $n\geq n_{0}(i)$. Therefore
\[
\lim_{k} d(f_{n}p_{k}x,z_k):=\widehat{d}(f_{n}[p_{k}x],[z_k])<\frac{1}{i}
\]
for every $n\geq n_{0}(i)$. Hence, using induction, we may find strictly increasing sequences of positive integers $\{n_{i}\}$ and $\{k_{i}\}$ such that
\begin{eqnarray} \label{eq52}
d(f_{n_{i}}p_{k_{i}}x,z_{k_i})<\frac{1}{i}
\end{eqnarray}
for every positive integer $i$. Since $\{ z_{k_i}\}$ is a Cauchy sequence then by equation \ref{eq52}, $\{f_{n_{i}}p_{k_{i}}x\}$ is a Cauchy sequence in $X$.

Now, either $\{f_{n_{i}}p_{k_{i}}\}$ has a convergent subsequence in $Iso(X)$ (without loss of generality and for the economy of the proof we may assume that
$\{f_{n_{i}}p_{k_{i}}\}$ converges in $Iso(X)$) or $f_{n_{i}}p_{k_{i}}\to\infty$ in $Iso(X)$. In both cases, by Corollary \ref{cor56} and Proposition \ref{prop57},
there is $h_{2}\in C(\widehat{X},\widehat{X})$ such that $\widehat{f_{n_{i}}}\widehat{p_{k_{i}}}=\widehat{f_{n_{i}}p_{k_{i}}}\to h_{2}$ pointwise on $\widehat{X}$. We
will show that $\widehat{f_{n_{i}}}\to h_{2}h_{1}^{-1}$ pointwise on $\widehat{X}$. Take $w\in\widehat{X}$. Since $h_{1}\in Iso(\widehat{X})$, there is some $u\in
\widehat{X}$ such that $h_{1}(u)=w$. Hence
\[
\begin{split}
\widehat{d}(f_{n_{i}}w,h_{2}h_{1}^{-1}w)&=\widehat{d}(f_{n_{i}}h_{1}u,h_{2}u)\\
                                        &\leq \widehat{d}(f_{n_{i}}h_{1}u,\widehat{f_{n_{i}}p_{k_{i}}}u) + \widehat{d}(\widehat{f_{n_{i}}p_{k_{i}}}u,h_{2}u)\\
                                        &=\widehat{d}(h_{1}u,\widehat{p_{k_{i}}}u) + \widehat{d}(\widehat{f_{n_{i}}p_{k_{i}}}u,h_{2}u)
\end{split}
\]
which converges to $0$, since $\widehat{p_{k_{i}}}\to h_{1}$ and $\widehat{f_{n_{i}}p_{k_{i}}}\to h_{2}$ pointwise on $\widehat{X}$. Hence $\{f_{n}x\}$ is a Cauchy
sequence for every $x\in X$. Since we assumed that $f_{n}\to\infty$ in $Iso(X)$ then, by Corollary \ref{cor56}, $\{f_{n}\}$ converges pointwise on $\widehat{X}$ to
$h_{2}h_{1}^{-1}\in E$.

To finish the proof of the second case assume that $\{f_{n}\}$ is in $H$. Then arguing as in the first case we can show that $\{ f_{n}\}$ has a convergent subsequence
to a point of $E$.

Next, we show that if $Y$ is a subset of $X\cup X_{l}$ that contains $X$ such that the map
\[
\omega :E\times Y\to Y\times \widehat{X}
\]
is proper then $Y\subset X\cup X_{p}$. To see that take a point $[g_{k}x]\in Y\setminus X$. This means that $\{ g_{k}\}$ is a sequence in $Iso(X)$ such that
$g_{k}\to\infty$ in $Iso(X)$ and $\{ g_{k}x\}$ is a Cauchy sequence in $X$. By Lemma \ref{lem55} (ii), $\widehat{g_{n}}x\to [g_{k}x]$ and, by Corollary \ref{cor56},
$\{ \widehat{g_{n}}\}$ converges pointwise on $\widehat{X}$ to some $h\in C(\widehat{X},\widehat{X})$. Note that $x\in X\subset Y$. Since $\widehat{g_{n}}x\to
[g_{k}x]$ and $\widehat{g_n}$ preserves the metric $\widehat{d}$ then $\widehat{g_{n}}^{-1}[g_{k}x]\to x$, Hence, by the properness of $\omega$, we may assume that
$\{\widehat{g_{n}}^{-1}\}$ has a subsequence $\{\widehat{g_{n_{k}}}^{-1}\}$ that converges pointwise to some $f\in E$. This makes $h$ a surjection, hence $h\in
Iso(\widehat{X})$. Therefore, $[g_{k}x]\in X_{p}$, so $Y\setminus X \subset X_{p}$.
\end{proof}

Note that, as the following example shows, it may happen that $X_{p}=X_{l}\neq\emptyset$, $X\cup X_{p}\neq\widehat{X}$ and \textit{the set $X\cup X_{p}$ is not the
maximal subset of $\widehat{X}$ such that the action $(E,X\cup X_{p})$ is proper}.

\begin{example} \label{ex513}
Take
\[
X:=\{ (x,y)\in\mathbb{R}\,|\, x\in \mathbb{Q}+\sqrt{2}\,\mathbb{N},y>0\,\},
\]
endowed with the Euclidean metric. Its group of isometries is the additive group of the rational numbers acting by horizontal translations. Therefore, $(Iso(X),X)$ is
Cauchy-indivisible. Obviously $\widehat{X}$ is the closed upper half plane, $X_{p}=X_{l}\neq\emptyset$, $X\cup X_{p}$ is the open upper half plane and $E$ is the
additive group of the real numbers acting by horizontal translations on $\widehat{X}$. Hence $E$ acts properly on $\widehat{X}$.
\end{example}

\begin{remark} \label{rem514}
The sets $X_p$ and $X_l$ constructed in Theorem \ref{th512} are \textit{optimal} in the sense that if one may think to replace the sets $X_p$ and $X_l$ with the
following more general sets
\begin{align*}
&X^*_{l} =\{ y\in\widehat{X}\,|\,\mbox{there exist a sequence}\,\,\{ g_{n}\}\subset Iso(X)\,\,\mbox{and some}\,\,x\in X\\
&\hspace{12mm}\mbox{such that}\,\, g_{n}\to\infty\,\,\mbox{in}\,\, Iso(X), \{ g_{n}x\}\,\,\mbox{is a Cauchy sequence and}\\
& \hspace{12mm}y=[g_{k}x_k],\,\,\mbox{for some}\,\, [x_k]\in\widehat{X}\},\,\,\mbox{and}\\
& X^*_{p}=\{ y\in\widehat{X}\,|\,\mbox{there exist a sequence}\,\,\{ g_{n}\}\subset Iso(X)\,\,\mbox{and some}\,\,x\in X\\
& \hspace{12mm}\mbox{such that}\,\,g_{n}\to\infty\,\,\mbox{in}\,\, Iso(X), \{ g_{n}x\}\,\,\mbox{and}\,\,\{ g_{n}^{-1}x\}\,\,\mbox{are Cauchy}\\
& \hspace{12mm}\mbox{sequences and}\,\, y=[g_{k}x_k],\,\,\mbox{for some}\,\, [x_k]\in\widehat{X}\}
\end{align*}
and ask if the set $X\cup X^*_{p}$ is the maximal subset of the completion $\widehat{X}$ such that the map $\omega^* :E\times (X\cup X^*_{p})\to (X\cup X^*_{p})\times
\widehat{X}$ with $\omega^* (f,y)=(y,fy)$, $f\in E$ and $y\in X\cup X^*_{p}$ is proper \textit{this is not true in general}. This follows from the following assertion
and Example \ref{ex515}, which shows that there is a metric space $X$ such that $(Iso(X),X)$ is Cauchy-indivisible,  $X_p\neq\emptyset$ and the map $\omega^*$ as
above is not proper.

\begin{Assertion}
If $X^*_p\neq\emptyset$ (equivalently $X_{p}\neq\emptyset$) then $X^*_p=\widehat{X}$.
\end{Assertion}
\begin{proof} Let $y=[x_k]$ in $\widehat{X}$. By assumption, there exist a sequence $\{ g_n\}\subset Iso(X)$ and a point $x\in X$ such that $g_n\to\infty$ in $Iso(X)$ and the
sequences $\{ g_{n}x\}$, $\{ g_{n}^{-1}x\}$ are Cauchy. By Lemma \ref{lem55}, $\{ g_nx_n\}$ is a Cauchy sequence in $X$ and  $g^{-1}_k[g_kx_k]\to
[g^{-1}_{k}g_kx_k]=[x_k]=y$. Hence, $y\in X^*_p$.
\end{proof}
\end{remark}

\begin{example} \label{ex515}
The example is a combination of Example \ref{ex43} and of a 3-dimensional variation of the ``river metric" \cite[Example 4.1.6]{engel}. Let
\[
X=\{ (x,y,z)\,|\,\, x\in\mathbb{Q}+\sqrt{2}\,\mathbb{N},\,\,\, y\in\mathbb{Q}+\sqrt{2}\,\mathbb{N},\,\,\, z>0\}.
\]
For every pair of points $w_1=(x_1,y_1,z_1)$, $w_2=(x_2,y_2,z_2)\in X$ define
\[
d(w_1,w_2):=\left \{
\begin{array}{ll} |y_1 - y_2|+|z_1 - z_2|, & \mbox{if}\; x_1=x_2\\
          |y_1|+|y_2|+|x_1-x_2|+|z_1-z_2|, &\mbox{if}\; x_1\neq x_2.
\end{array} \right.
\]
We can easily verify than $d$ is a metric on $X$. The group of isometries $Iso(X,d)$ consists of all the maps $g:X\to X$ of the form
\[
g(x,y,z)=(x+p,y+q,z),\,\,\, p,q\in\mathbb{Q}.
\]
The action $(Iso(X),X)$ is Cauchy-indivisible since $X$ does not contain the $xy$-plane (the last coordinate of the points of $X$ is positive). Then
\[
X_p=\{ (x,y,z)\,|\,\, x\in\mathbb{Q}+\sqrt{2}\,\mathbb{N},\,\,\, y\in\mathbb{R},\,\,\, z>0\}.
\]
To see that take $x\in\mathbb{Q}+\sqrt{2}\,\mathbb{N}$, $y\in\mathbb{R}$ and $z>0$ and choose $k\in\mathbb{N}$ such that $y-\sqrt{2}k\notin\mathbb{Q}$. Let $\{
q_{n}\}$ be a sequence of rational numbers such that $q_n\to y-\sqrt{2}k$. Hence, if we let $\{ g_n\}\subset Iso(X)$ with $g_{n}(x,y,z):=(x,y+q_n,z)$ then
$g_{n}(x,\sqrt{2}k,z)=(x,q_n+\sqrt{2}k,z)\to (x,y,z)$. Hence $(x,y,z)\in X_p$. Observe that
\[
\widehat{X}=\{ (x,y,z)\,|\,\, x\in\mathbb{Q}+\sqrt{2}\,\mathbb{N},\,\,\, y\in\mathbb{R},\,\,\, z\geq 0\},
\]
and $E$ consists of all the maps $g:\widehat{X}\to \widehat{X}$ with
\[
g(x,y,z)=(x+p,y+r,z),\,\,\, p\in\mathbb{Q},\,r\in\mathbb{R}.
\]
However, the map $\widehat{\omega} :E\times \widehat{X}\to \widehat{X} \times \widehat{X}$ with $\widehat{\omega} (f,w)=(w,fw)$, $f\in E$ and $w\in \widehat{X}$ is
not proper since if we take a sequence of rational numbers $\{ p_{n}\}$ such that $p_{n}\to \sqrt{2}$ and let $\{ g_{n}\}\subset E$ with $g_{n}(x,y,z)=(x+p_n,y,z)$
then $g_n(x,0,0)\to (x+\sqrt{2},0,0)$ for each $x\in\mathbb{Q}+\sqrt{2}\,\mathbb{N}$. The sequence $\{ g_n\}$ diverges in $E$ since for instance the distance of the
points $ g_n(\sqrt{2},\sqrt{2},1)=(q_n+\sqrt{2},\sqrt{2},1)$ from any point of $X$ is eventually at least $\sqrt{2}$. Hence the limit set $L((x,0,0))$ is not empty.
\end{example}

A question that arises naturally from Theorem \ref{th512} is if the action of the Ellis semigroup $E$ on $X\cup X_{l}$ is proper. Surprisingly, as the following
proposition shows, this is equivalent to the existence of a Weil completion (with respect to the uniformity of pointwise convergence) for the group $Iso(X)$. Before
we give the statement let us recall a few things about the Weil completion of $Iso(X)$, defined in the introduction. The uniformity of pointwise convergence on $X$
coincides with the left uniformity of $Iso(X)$ (cf. \cite[Ch. III, \S 3.1 and Ch. X, \S 3 Exercise 19 (a), p. 332]{bour1}) and $Iso(X)$ has Weil completion with
respect to this uniformity if  the left and the right uniformities coincide; cf. \cite[Ch. III, \S 3.4 and \S 3 Exercise 3, p. 306]{bour1}. Note that the left
completion of $Iso(X)$ does not depend on the choice of a left-invariant metric on $Iso(X)$; cf. Lemma 2.9 in \cite{hjorth}.

\begin{proposition} \label{prop516}
The following are equivalent:
\begin{enumerate}
\item[(i)] The map
\[
\omega :E\times (X\cup X_{l})\to (X\cup X_{l})\times \widehat{X}
\]
is proper.

\item[(ii)] $E$ is a group (precisely a closed subgroup of $Iso(\widehat{X})$).

\item[(iii)] $Iso(X)$ has a Weil completion with respect to the uniformity of pointwise convergence (in this case $E$ is the Weil completion of $Iso(X)$).
\end{enumerate}
\end{proposition}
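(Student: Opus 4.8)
The plan is to establish the two equivalences (i)$\Leftrightarrow$(ii) and (ii)$\Leftrightarrow$(iii) separately, the first being a direct consequence of the maximality already proved and the second carrying the real content. For (i)$\Leftrightarrow$(ii) I would work entirely through Theorem \ref{th512} and Proposition \ref{prop510}, using the trivial inclusion $X_{p}\subset X_{l}$ read off from Notation \ref{not1}. If $\omega$ is proper on $E\times(X\cup X_{l})$, then $X\cup X_{l}$ is itself a subset of $X\cup X_{l}$ containing $X$ on which $\omega$ is proper, so the maximality asserted in Theorem \ref{th512} forces $X\cup X_{l}\subset X\cup X_{p}$, that is $X_{l}\subset X_{p}$; combined with $X_{p}\subset X_{l}$ this gives $X_{l}=X_{p}$, and Proposition \ref{prop510} then says $E$ is a group. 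Conversely, if $E$ is a group, Proposition \ref{prop510} gives $X_{l}=X_{p}$, hence $X\cup X_{l}=X\cup X_{p}$, and Theorem \ref{th512} immediately yields that $\omega$ is proper on $E\times(X\cup X_{l})$.

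For (ii)$\Rightarrow$(iii) I would verify directly that $E$ is the announced Weil completion. Assuming $E$ is a group, every $h\in E$ is invertible in $E$; since each element of $E=\widehat{Iso(X)}\cup H$ preserves $\widehat{d}$ by Corollary \ref{cor56} and invertibility makes it surjective, each such $h$ is an isometry of $\widehat{X}$. Thus $E$ is a subgroup of $Iso(\widehat{X})$, and it is closed there because it is complete by Proposition \ref{prop58}(ii) and complete subspaces of the metric space $(Iso(\widehat{X}),\delta)$ are closed. By Proposition \ref{prop53} the pointwise uniformity on $E$ coincides with the left uniformity it inherits as a subgroup, for which $E$ is complete; since $\widehat{Iso(X)}$ is dense in $E$ by Proposition \ref{prop58}(iii) and $g\mapsto\widehat{g}$ identifies $Iso(X)$ with a dense subgroup carrying the matching (left $=$ pointwise) uniformity, $E$ is a complete group in which $Iso(X)$ embeds densely, i.e.\ a Weil completion.

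The substantive direction is (iii)$\Rightarrow$(ii), where the task is to convert the abstract completion hypothesis into the concrete equality $X_{l}=X_{p}$. By the Bourbaki result recalled before the statement, the existence of a Weil completion is equivalent to the coincidence of the left and right uniformities of $Iso(X)$; as Cauchy-ness of a sequence depends only on the underlying uniformity, this coincidence forces the left-Cauchy and the right-Cauchy sequences to coincide. Recalling from the proof of Proposition \ref{prop58} that $\{g_{n}\}$ is left-Cauchy exactly when $\{g_{n}x\}$ is Cauchy for every $x$ and right-Cauchy exactly when $\{g_{n}^{-1}x\}$ is Cauchy for every $x$, I would take $y=[g_{k}x]\in X_{l}$ with $g_{n}\to\infty$ and $\{g_{n}x\}$ Cauchy; Cauchy-indivisibility makes $\{g_{n}x'\}$ Cauchy for every $x'$, so $\{g_{n}\}$ is left-Cauchy, hence right-Cauchy, so in particular $\{g_{n}^{-1}x\}$ is Cauchy and $y\in X_{p}$. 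This gives $X_{l}\subset X_{p}$, whence $X_{l}=X_{p}$ and $E$ is a group by Proposition \ref{prop510}. I expect this direction to be the main obstacle: it is the only place where one must identify the left completion concretely built into $E$ with the abstract Weil completion, and the Cauchy-sequence translation above is precisely what avoids having to transport the group structure across an abstract isomorphism of completions.
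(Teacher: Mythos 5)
Your proof is correct in substance and covers all three equivalences, but it takes a partly different route from the paper. For (i)$\Rightarrow$(ii) the paper repeats a direct argument: given $h\in H$ with $\widehat{g_n}\to h$, it uses $\widehat{g_n}^{-1}hx\to x$ together with Lemma \ref{lem511} and the properness of $\omega$ to extract a convergent subsequence of $\{\widehat{g_n^{-1}}\}$, making $h$ surjective and hence invertible. You instead quote the maximality clause of Theorem \ref{th512}, which is a legitimate and shorter derivation since that clause is proved by exactly this argument. One small point to patch: maximality only yields $X\cup X_l\subset X\cup X_p$, i.e.\ $X_l\setminus X\subset X_p$, and to invoke Proposition \ref{prop510} you need the set equality $X_l=X_p$; you should add the one-line observation that $X_l\cap X\subset X_p$ (if $g_kx\to y\in X$ then $g_k^{-1}y\to x$, so Cauchy-indivisibility applied to $\{g_k^{-1}\}$ makes $\{g_k^{-1}x\}$ Cauchy). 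For (ii)$\Leftrightarrow$(iii) the paper runs both directions through the Bourbaki criterion that a Weil completion exists iff inversion preserves Cauchy sequences, asserting that this is ``easy to check'' to be equivalent to $X_l=X_p$. Your (iii)$\Rightarrow$(ii) spells out precisely that check via the left/right uniformity translation, and your (ii)$\Rightarrow$(iii) is a more constructive verification that $E$ itself is the completion, which also justifies the parentheticals in (ii) and (iii) that the paper leaves implicit. Both routes are sound; yours is somewhat more self-contained at the cost of length.
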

\begin{proof}
We show that item (i) implies item (ii) and vice versa. Suppose that $\omega$ is  proper. Take some $h\in E$. Since $E$ is a semigroup, cf. Proposition \ref{prop58}
(iii), we have only to show that $h$ has an inverse in $E$. If $h=\widehat{g}\in\widehat{Iso(X)}$ for some $g\in Iso(X)$, then $\widehat{g^{-1}}$ is the inverse of
$h$ in $\widehat{Iso(X)}\subset E$. If $h\in H$ there is a sequence $\{g_{n}\}$ in $Iso(X)$ such that $g_{n}\to\infty$ in $Iso(X)$ and $\widehat{g_{n}}\to h$
pointwise on $\widehat{X}$. Hence, if $x\in X$ then $\widehat{g_{n}}x\to hx$. Since $\widehat{g_n}$ preserves the metric $\widehat{d}$ then
$\widehat{g_{n}}^{-1}hx=\widehat{g_{n}^{-1}}hx\to x$. By Lemma \ref{lem511}, $hx\in X\cup X_{l}$, hence, by the properness of $\omega$, $\{ \widehat{g_{n}^{-1}}\}$
has a convergent subsequence $\{ \widehat{g_{n_{k}}^{-1}}\}$ to some $f\in E$. This makes $h$ a surjection, hence $h\in Iso(\widehat{X})$ and $h$ has an inverse in
$E$. To show the converse implication note that if $E$ is a group then $X_{l}=X_{p}$; cf. Proposition \ref{prop510}. Hence, by Theorem \ref{th512}, the map $\omega$
is proper.

To finish the proof of the proposition let us show that item (iii) implies item (ii) and vice versa. Note that $Iso(X)$ has a Weil completion if and only if the map
with $g\mapsto g^{-1}$ for every $g\in Iso(X)$ maps Cauchy sequences of $Iso(X)$ to Cauchy sequences; cf. \cite[Ch. III, \S 3.4 Theorem 1]{bour1}. It is easy to check
that in case when $Iso(X)$ is Cauchy-indivisible this is equivalent to $X_{l}=X_{p}$. Equivalently, by Proposition \ref{prop510},  $E$ is a group.
\end{proof}

\begin{remark} \label{rem517}
In case when $Iso(X)$ is a locally compact group, e.g. if $X$ is a locally compact space and $Iso(X)$ acts properly on it (as it is known in the case $X$ is
connected), then by \cite[Ch. III, \S 3 Exercise 8, p. 307]{bour1}, $Iso(X)$ has a locally compact completion hence $E$ is a locally compact group.
\end{remark}

We summarize with the following

\begin{corollary} \label{cor518}
If $E$ is a group the action $(Iso(X),X)$ is embedded densely in the proper action $(E,X\cup X_{l})$ such that the following equivariant diagram commutes
\[
\xymatrix{
(Iso(X),X)       \ar[d] \ar[r]   &X \ar[d] \\
(E,X\cup X_{l})  \ar[r]        &\widehat{X}}
\]
where $X\to X\cup X_{l}$ is the inclusion map and the map $Iso(X)\to E$ is defined by $g\mapsto \widehat{g}$ for every $g\in Iso(X)$. By the word ``densely" we mean
that $X$ is dense in $X\cup X_{l}$ and $\widehat{Iso(X)}$ is dense in $E$.
\end{corollary}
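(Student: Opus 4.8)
The plan is to observe that this corollary assembles facts already established, so the proof reduces to verifying four points: that $(E,X\cup X_{l})$ is a well-defined proper action, that the left vertical arrow is an equivariant topological embedding, that the square commutes, and that the two density statements hold.

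First, the action $(E,X\cup X_{l})$ is well-defined because $X\cup X_{l}$ is $E$-invariant by Lemma \ref{lem511}. Its properness comes directly from Proposition \ref{prop516}: the hypothesis that $E$ is a group is condition (ii) there, which is equivalent to condition (i), namely that $\omega\colon E\times(X\cup X_{l})\to(X\cup X_{l})\times\widehat{X}$ is proper; since the evaluation map is isometric and action-like in the sense of \S2, properness of $\omega$ is the same as properness of the action.

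Next I would dispatch equivariance and commutativity, both of which are immediate. The left vertical map combines the homomorphism $g\mapsto\widehat{g}$ with the inclusion $X\hookrightarrow X\cup X_{l}$; equivariance is the identity $\widehat{g}(x)=g(x)$ for $x\in X$, valid because $\widehat{g}$ is the lift of $g$. The horizontal maps are the passages to the underlying spaces $X$ and $\widehat{X}$, and the right vertical map is the inclusion $X\hookrightarrow\widehat{X}$, so commutativity merely records that $X\hookrightarrow\widehat{X}$ factors through $X\cup X_{l}$. To see that $g\mapsto\widehat{g}$ is a topological embedding rather than just a continuous bijection onto $\widehat{Iso(X)}$, I would appeal to Proposition \ref{prop53}: since $X$ is dense in $\widehat{X}$, pointwise convergence on $X$ and pointwise convergence on $\widehat{X}$ induce the same uniformity on isometries. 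The inclusion $X\hookrightarrow X\cup X_{l}$ is an embedding because both sets carry the subspace topology of $\widehat{X}$.

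Finally, the density statements hold essentially by construction. That $\widehat{Iso(X)}$ is dense in $E$ is the very definition of $E$ as the Ellis semigroup, the pointwise closure of $\widehat{Iso(X)}$ (Proposition \ref{prop58}(iii)). That $X$ is dense in $X\cup X_{l}$ follows because $X$ is already dense in its completion $\widehat{X}$; concretely, each $y=[g_{k}x]\in X_{l}$ is the limit in $\widehat{X}$ of the sequence $\{g_{k}x\}\subset X$. There is no genuine obstacle here: the substantial work was carried out in Theorem \ref{th512} and Proposition \ref{prop516}, and the present statement is a packaging of those results, the only point deserving a word of care being the embedding claim, which is handled via Proposition \ref{prop53}.
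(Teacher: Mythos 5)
Your proof is correct and matches the paper's intent: the paper presents this corollary as a summary with no separate proof, and the facts you assemble (Lemma \ref{lem511} for invariance, Proposition \ref{prop516} for properness, Proposition \ref{prop58}(iii) for density of $\widehat{Iso(X)}$ in $E$, and the observation that each $[g_k x]\in X_l$ is a limit of points of $X$) are exactly the intended ingredients.
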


\begin{question} \label{que519}
The above embedding of a Cauchy-indivisible action as a dense sub-action of a proper one establishes a remarkable connection between Cauchy-indivisible and proper
actions. And at the same time proposes an interesting question: Is there any analogy with the situation of embedding of a proper action (on a locally compact and
connected space) in an appropriate zero-dimensional compactification, like in \cite{abels1} and \cite{MS2}? Namely, can we obtain any structurally informative
correspondence between divergent nets in $Iso(X)$ and suitable subsets of $X_l$?
\end{question}

\begin{remark} \label{rem519}
As we will see in the example described in \S 6 it may be happen that $X_{p}\neq X_{l}$ and $X\cup X_{l}=\widehat{X}$; cf. Proposition \ref{prop68}.
\end{remark}

In view of possible questions for refinements of Corollary \ref{cor518} we note that it may happen that $X\cup X_{p}=\widehat{X}$ and $E$ is not dense in
$Iso(\widehat{X})$, as the following example shows:

\begin{example} \label{ex520}
There is a separable metric space $(X,d)$ such that $(Iso(X),X)$ is Cauchy-indivisible, proper, $X\cup X_{p}=\widehat{X}$ and $Iso(X)$ has a Weil completion which
does not coincide with the group $Iso(\widehat{X})$.
\end{example}
\begin{proof}
Let $X$ be the set $\mathbb{Q}+\sqrt{2}\,\mathbb{N}$ endowed with the Euclidean metric; cf. Example \ref{ex43}. It is easy to check that $X\cup X_{p}=X\cup
X_{l}=\mathbb{R}$, see also Example \ref{ex515}, hence by Propositions \ref{prop510} and \ref{prop516}, $Iso(X)$ has a Weil completion (or just observe that $Iso(X)$
is an abelian group and use \cite[Ch. III, \S 3.5 Theorem 2]{bour1}). But all the reflections of the space are excluded, hence the pointwise closure $E$ of
$\widehat{Iso(X)}$ does not coincide with $Iso(\mathbb{R})$.
\end{proof}

\section{An example of a proper Cauchy-indivisible action of a group which has no Weil completion}
In this section we show that there is a separable metric space $X$ such that the action $(Iso(X),X)$ is proper, Cauchy-indivisible and the Ellis semigroup $E$ is not
a group. Equivalently, in view of Proposition \ref{prop516}, $Iso(X)$ has no Weil completion. Consider the space of the integers $\mathbb{Z}$ with the discrete metric
$d$, that is if $m,n\in\mathbb{Z}$ then $d(m,n)=0$ if $m=n$ and $d(m,n)=1$ otherwise. The group of isometries $Iso(\mathbb{Z})$ consists of all the self bijections of
$\mathbb{Z}$ and is an example of a topological group that has no Weil completion. To see that take $f_{n}:\mathbb{Z}\to\mathbb{Z}$ with $f_{n}z=z$ for $-n<z<0$,
$f_{n}(-n)=0$ and $f_{n}z=z+1$ otherwise. Then it is easy to verify that $f_{n}\to f$, where $fz=z$ for $z<0$, and $fz=z+1$ for $z\geq 0$. Hence  $\{ f_{n}z\}$ is a
Cauchy sequence in $\mathbb{Z}$ for every $z\in\mathbb{Z}$, therefore $\{ f_{n}\}$ is a Cauchy sequence in $Iso(\mathbb{Z})$ with respect to the uniformity of
pointwise convergence on $\mathbb{Z}$. But $\{f_{n}^{-1}0\}=\{ -n\}$ is not a Cauchy sequence, thus $\{ f_{n}^{-1}\}$ too. So, by \cite[Ch. III, \S 3.4 Theorem
1]{bour1}, $Iso(\mathbb{Z})$ has no Weil completion. The problem is that the action $(Iso(\mathbb{Z}),\mathbb{Z})$ is not Cauchy-indivisible. To see that notice that
$\{f_{n}^{-1}1\}=\{ 0\}$ but $\{f_{n}^{-1}0\}=\{ -n\}$ is not a Cauchy sequence. Nevertheless, the group  $Iso(Iso(\mathbb{Z}))$ is Cauchy-indivisible and has no Weil
completion as we show in the following.

Take an enumeration $A=\{ z_{i}\}$ of $\mathbb{Z}$ and  equip $Iso(\mathbb{Z})$ with the metric
\[
\varrho(f,g)=\sum_{i=1}^{\infty} \frac{1}{3^{i}}\,\, d(fz_{i},gz_{i})
\]
for $f,g\in Iso(\mathbb{Z})$. In view of Proposition \ref{prop53} the uniformity of pointwise convergence, the left uniformity and the uniformity induced by $\varrho$
on $Iso(\mathbb{Z})$ coincide (the choice of $\frac{1}{3}$ instead of $\frac{1}{2}$ in Definition \ref{def52} will be clarified in the proof of Lemma \ref{lem61}).
Note that $(Iso(\mathbb{Z}),\varrho)$ is a separable metric space. We will show that $Iso(Iso(\mathbb{Z}))$ is Cauchy-indivisible but has no Weil completion.

\begin{lemma} \label{lem61}
If $T\in Iso(Iso(\mathbb{Z}))$ and $f,g\in Iso(\mathbb{Z})$ then
\[
d(T(f)z,T(g)z)=d(fz,gz)
\]
for every $z\in\mathbb{Z}$.
\end{lemma}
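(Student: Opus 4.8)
The plan is to exploit that $d$ is the discrete metric, so that $\varrho(f,g)$ depends only on the \emph{set} of coordinates where $f$ and $g$ disagree, encoded as a base-$3$ sum with digits in $\{0,1\}$; such a sum determines that set uniquely, and this is exactly where the choice of weight $\tfrac13$ (rather than $\tfrac12$) pays off. Accordingly, for $f,g\in Iso(\mathbb{Z})$ I would set
\[
S(f,g)=\{\, i\in\mathbb{N}\mid fz_i\neq gz_i \,\},
\]
so that, since each $d(fz_i,gz_i)\in\{0,1\}$,
\[
\varrho(f,g)=\sum_{i\in S(f,g)}\frac{1}{3^i}.
\]
Because $i\in S(f,g)$ says precisely that $d(fz_i,gz_i)=1$, the assertion of the lemma is equivalent to the equality $S(Tf,Tg)=S(f,g)$ for all $f,g\in Iso(\mathbb{Z})$.

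The crucial step is the arithmetic injectivity of the map $S\mapsto\sum_{i\in S}3^{-i}$ from subsets of $\mathbb{N}$ into $\mathbb{R}$. To prove it, suppose $S\neq S'$ and let $k$ be the least index in their symmetric difference, say $k\in S\setminus S'$. The two sums agree on all indices below $k$, the index $k$ contributes $3^{-k}$ to the first sum only, and the indices above $k$ contribute at most the tail
\[
\sum_{i>k}\frac{1}{3^i}=\frac{1}{2\cdot 3^k}<\frac{1}{3^k}
\]
to either sum; hence the two sums differ by at least $3^{-k}-\tfrac{1}{2}3^{-k}=\tfrac{1}{2}3^{-k}>0$ and cannot be equal. (With weight $\tfrac12$ the tail would equal the leading term and uniqueness would fail, which explains the $\tfrac13$ in Definition \ref{def52}.) Thus $\varrho(f,g)$ recovers $S(f,g)$ unambiguously.

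Finally, since $T\in Iso(Iso(\mathbb{Z}))$ is an isometry for $\varrho$, we have $\varrho(Tf,Tg)=\varrho(f,g)$ for all $f,g$, and the injectivity just established forces $S(Tf,Tg)=S(f,g)$. Unwinding the definition of $S$ this says $d(T(f)z_i,T(g)z_i)=d(fz_i,gz_i)$ for every $i$, equivalently for every $z\in\mathbb{Z}$ since $A=\{z_i\}$ enumerates $\mathbb{Z}$, which is the claim. The only point requiring any work is the uniqueness estimate in the middle paragraph; everything else is a direct translation between the discrete metric $d$ and the weighted metric $\varrho$.
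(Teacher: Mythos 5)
Your proof is correct and follows essentially the same route as the paper: both reduce the claim to the fact that a sum $\sum_i 3^{-i}\varepsilon_i$ with digits $\varepsilon_i\in\{0,1\}$ determines the digits uniquely, which is exactly why the weight $\tfrac13$ was chosen. The paper merely asserts this uniqueness in one line, whereas you supply the (correct) leading-term-versus-tail estimate that justifies it.
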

\begin{proof}
Since $\varrho(T(f),T(g))=\varrho(f,g)$ then
\[
\sum_{i=1}^{\infty} \frac{1}{3^{i}}\,\, d(T(f)z_{i},T(g)z_{i})=\sum_{i=1}^{\infty} \frac{1}{3^{i}}\,\, d(fz_{i},gz_{i}).
\]
Since the values of $d$ are $0$ or $1$ then $d(T(f)z_{n},T(g)z_{n})=d(fz_{n},gz_{n})$, for every $z_{n}\in A=\mathbb{Z}$ (here is the role of the choice of
$\frac{1}{3}$ instead of $\frac{1}{2}$).
\end{proof}

\begin{proposition} \label{prop62}
If $T\in Iso(Iso(\mathbb{Z}))$ and $f\in Iso(\mathbb{Z})$ then $T(f)=T(e)\circ f$, where $e$ is the unit element of $Iso(\mathbb{Z})$.
\end{proposition}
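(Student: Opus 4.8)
The plan is to exploit that the metric $d$ on $\mathbb{Z}$ takes only the values $0$ and $1$, so that the isometry identity of Lemma \ref{lem61} becomes a purely combinatorial statement: for all $f,g\in Iso(\mathbb{Z})$ and every $z\in\mathbb{Z}$,
\[
T(f)z=T(g)z \iff fz=gz .
\]
The ``$\Leftarrow$'' direction says that for each fixed $z$ the value $T(f)z$ depends only on the value $f(z)$. Since $Iso(\mathbb{Z})$ consists of \emph{all} self-bijections of $\mathbb{Z}$, as $f$ ranges over $Iso(\mathbb{Z})$ the value $f(z)$ ranges over all of $\mathbb{Z}$; hence for each $z$ I obtain a well-defined map $\psi_z\colon\mathbb{Z}\to\mathbb{Z}$ characterised by $T(f)z=\psi_z(f(z))$. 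Taking $f=e$ records the normalisation $\psi_z(z)=T(e)z$. The whole proposition then follows once I show that $\psi_z$ does not depend on $z$, for then $T(f)z=\psi_z(f(z))=\psi_{f(z)}(f(z))=T(e)(f(z))$ for every $z$, i.e. $T(f)=T(e)\circ f$.

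The $z$-independence is the heart of the matter, and I would prove it by a transposition argument. Fix distinct $z,z'$ and an arbitrary value $w'$; choose $w\neq w'$ and a bijection $f$ with $f(z)=w$, $f(z')=w'$, and set $f'=f\circ(z\,z')$, so that $f$ and $f'$ agree off $\{z,z'\}$ while $f'(z)=w'$ and $f'(z')=w$. Applying Lemma \ref{lem61} to the pair $f,f'$ gives $d(T(f)u,T(f')u)=d(fu,f'u)$ for every $u$, so $T(f)$ and $T(f')$ agree off $\{z,z'\}$ and (because $w\neq w'$) differ at both $z$ and $z'$. Now a small combinatorial fact enters: two bijections of $\mathbb{Z}$ that agree on $\mathbb{Z}\setminus\{z,z'\}$ send $\{z,z'\}$ onto one and the same two-element complement, so if in addition they differ at both $z$ and $z'$ they must interchange the two omitted values there; consequently $T(f')z=T(f)z'$. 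Rewriting the two sides through $\psi$ gives $\psi_z(w')=\psi_{z'}(w')$, and since $w'$ was arbitrary this yields $\psi_z=\psi_{z'}$.

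Combining the $z$-independence with the normalisation $\psi_z(z)=T(e)z$ identifies the common map as $T(e)$, because for any $z,w$ one has $\psi_z(w)=\psi_w(w)=T(e)w$. Substituting back, $T(f)z=\psi_z(f(z))=T(e)(f(z))$ for all $z$, which is exactly $T(f)=T(e)\circ f$. The only step demanding genuine care is the elementary lemma that a bijection agreeing with another bijection off a two-point set and differing at both of its points necessarily swaps the two values omitted elsewhere; this is where I expect the main (though modest) obstacle to lie, and it is precisely the mechanism that forces the local data $\psi_z$ to be globally consistent.
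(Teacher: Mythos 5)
Your argument is correct and rests on the same two ingredients as the paper's own proof: the pointwise equivalence $T(f)z=T(g)z\iff fz=gz$ extracted from Lemma \ref{lem61} via the $\{0,1\}$-valued metric, and the fact that two bijections of $\mathbb{Z}$ agreeing off a two-point set and differing at both of its points must swap the two omitted values. The paper merely organises these differently --- first establishing $T(g)=T(e)\circ g$ for transpositions $g$ by comparing $T(g)$ with $T(e)$, then handling a general $f$ at each point $z$ by comparing it with the transposition interchanging $z$ and $fz$ --- so your $\psi_z$-formulation is essentially the same proof.
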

\begin{proof}
Let $z_{k},z_{l}$ be two distinct integers and let $g\in Iso(\mathbb{Z})$ be such that $gz_{k}=z_{l}$, $gz_{l}=z_{k}$ and $gz=z$ elsewhere. We show that
$T(g)=T(e)\circ g$. If $z\neq z_{k},z_{l}$ then, by Lemma \ref{lem61}, $d(T(g)z,T(e)z)=d(gz,z)=0$. Hence $T(g)z=T(e)z=T(e)\circ gz$. Moreover
\[
d(T(g)z_{k},T(e)z_{k})=d(gz_{k},z_{k})=d(z_{l},z_{k})=1
\]
and, similarly, $d(T(g)z_{l},T(e)z_{l})=1$. Since $T(g)z_{k}\neq T(g)z=T(e)z$ for $z\neq z_{k},z_{l}$ and $T(e)$ is surjective then $T(g)z_{k}=T(e)z_{l}=T(e)\circ
gz_{k}$ and, similarly, $T(g)z_{l}=T(e)\circ gz_{l}$. Therefore $T(g)=T(e)\circ g$.

Fix $f\in Iso(\mathbb{Z})$ and some $z\in\mathbb{Z}$. If $fz=z$ then $T(f)z=T(e)z=T(e)\circ fz$ since $d(T(f)z,T(e)z)=d(fz,z)=0$. If $fz\neq z$, let $g\in
Iso(\mathbb{Z})$ with $gz=fz$, $gfz=z$ and $gw=w$ elsewhere. Since $d(T(f)z,T(g)z)=d(fz,gz)=0$ then $T(f)z=T(g)z$. Using the result of the previous paragraph,
$T(f)z=T(g)z=T(e)\circ gz=T(e)\circ fz$. Since $z$ was arbitrary then $T(f)=T(e)\circ f$.
\end{proof}

\begin{corollary} \label{cor63}
Let $L,T\in Iso(Iso(\mathbb{Z}))$. Then $L\circ T(e)=L(e)\circ T(e)$ and $T^{-1}(e)=(T(e))^{-1}$.
\end{corollary}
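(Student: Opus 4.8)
The plan is to read both identities as direct consequences of Proposition \ref{prop62}, which tells us that every $T\in Iso(Iso(\mathbb{Z}))$ acts on $Iso(\mathbb{Z})$ as left translation by the single element $T(e)$, that is $T(f)=T(e)\circ f$ for all $f\in Iso(\mathbb{Z})$. Thus the whole content of the corollary is that the evaluation-at-$e$ map $T\mapsto T(e)$ from $Iso(Iso(\mathbb{Z}))$ to $Iso(\mathbb{Z})$ is a group homomorphism, and I would organise the argument around this observation.

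For the first identity I would simply unwind the composition. Since $L\circ T$ is the composition of the maps $L$ and $T$ in $Iso(Iso(\mathbb{Z}))$, evaluating at $e$ gives $(L\circ T)(e)=L(T(e))$. Now $T(e)$ is an element of $Iso(\mathbb{Z})$, so applying Proposition \ref{prop62} to $L$ at the point $f=T(e)$ yields $L(T(e))=L(e)\circ T(e)$. Combining the two gives $L\circ T(e)=L(e)\circ T(e)$, as claimed. For the second identity I would specialise the first to $L=T^{-1}$, where $T^{-1}$ denotes the inverse of $T$ in the group $Iso(Iso(\mathbb{Z}))$; this produces $(T^{-1}\circ T)(e)=T^{-1}(e)\circ T(e)$. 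But $T^{-1}\circ T$ is the identity element of $Iso(Iso(\mathbb{Z}))$, namely the identity map of $Iso(\mathbb{Z})$, whose value at $e$ is $e$. Hence $T^{-1}(e)\circ T(e)=e$, and since the composition here is the group law of $Iso(\mathbb{Z})$, this says precisely that $T^{-1}(e)=(T(e))^{-1}$.

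I do not expect any genuine obstacle: both statements follow mechanically once Proposition \ref{prop62} is in hand. The only point that deserves a moment's care is bookkeeping between the two distinct group structures in play, namely composition in $Iso(Iso(\mathbb{Z}))$ versus composition in $Iso(\mathbb{Z})$, together with the (trivial but necessary) identification of the identity of $Iso(Iso(\mathbb{Z}))$ with the identity map of $Iso(\mathbb{Z})$ and the observation that it sends $e$ to $e$.
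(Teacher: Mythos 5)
Your proof is correct and follows essentially the same route as the paper's: both identities are read off from Proposition \ref{prop62}, the first by evaluating the composition $L\circ T$ via $L(T(e))=L(e)\circ T(e)$, and the second by applying the first to $L=T^{-1}$ together with the observation that the identity of $Iso(Iso(\mathbb{Z}))$ sends $e$ to $e$. The only cosmetic difference is that the paper verifies $I(e)=e$ from the relation $f=I(e)\circ f$ rather than by direct inspection; the substance is identical.
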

\begin{proof}
Since $T(f)=T(e)\circ f$ for every $T\in Iso(Iso(\mathbb{Z}))$ and $f\in Iso(\mathbb{Z})$, then
\[
L\circ T(f)=L(T(f))=L(e)\circ T(f)=L(e)\circ T(e)\circ f.
\]
Hence, $L(e)\circ T(e)= L\circ T(e)$. If $I$ denote the identity on $Iso(Iso(\mathbb{Z}))$, then $f=I(f)=I(e)\circ f$. Hence $I(e)=e$ and $T^{-1}(e)=(T(e))^{-1}$.
\end{proof}

\begin{proposition} \label{prop64}
The map $\mathcal{B}:Iso(Iso(\mathbb{Z}))\to Iso(\mathbb{Z})$ with $\mathcal{B}(T)=T(e)$ is a uniform group isomorphism with respect to the uniformities of pointwise
convergence on the underlying spaces $Iso(\mathbb{Z})$ and $\mathbb{Z}$ respectively.
\end{proposition}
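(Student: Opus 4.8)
The plan is to verify in turn that $\mathcal{B}$ is a group homomorphism, a bijection, and a uniform isomorphism for the pointwise uniformities, drawing almost everything from Proposition \ref{prop62} and Corollary \ref{cor63}. The homomorphism property is immediate from Corollary \ref{cor63}: for $L,T\in Iso(Iso(\mathbb{Z}))$ one has $\mathcal{B}(L\circ T)=(L\circ T)(e)=L(e)\circ T(e)=\mathcal{B}(L)\circ\mathcal{B}(T)$. For injectivity, if $\mathcal{B}(T)=\mathcal{B}(S)$, i.e. $T(e)=S(e)$, then Proposition \ref{prop62} yields $T(f)=T(e)\circ f=S(e)\circ f=S(f)$ for every $f\in Iso(\mathbb{Z})$, so $T=S$. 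For surjectivity I would introduce, for each $g\in Iso(\mathbb{Z})$, the left translation $T_g\colon Iso(\mathbb{Z})\to Iso(\mathbb{Z})$, $T_g(f)=g\circ f$. Since $\varrho$ is left-invariant (by the same computation as for $\delta$ in Definition \ref{def52}), $\varrho(T_g(f),T_g(h))=\varrho(g\circ f,g\circ h)=\varrho(f,h)$, so $T_g$ preserves $\varrho$, and $T_{g^{-1}}$ is its two-sided inverse; hence $T_g$ is a surjective isometry, i.e. $T_g\in Iso(Iso(\mathbb{Z}))$. As $\mathcal{B}(T_g)=T_g(e)=g$, the map $\mathcal{B}$ is onto, and $\mathcal{B}^{-1}(g)=T_g$.

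For uniform continuity of $\mathcal{B}$, recall that a sub-basic entourage of the pointwise uniformity on the target $Iso(\mathbb{Z})$ is $V_{z_j,\varepsilon}=\{(p,q):d(p(z_j),q(z_j))<\varepsilon\}$ with $z_j\in\mathbb{Z}$. Testing the source uniformity at the single point $e\in Iso(\mathbb{Z})$, the entourage $U=\{(T,S):\varrho(T(e),S(e))<3^{-j}\varepsilon\}$ suffices: if $(T,S)\in U$, then the $j$-th term of the defining series of $\varrho$ gives $3^{-j}\,d(T(e)(z_j),S(e)(z_j))\le\varrho(T(e),S(e))<3^{-j}\varepsilon$, whence $(\mathcal{B}(T),\mathcal{B}(S))=(T(e),S(e))\in V_{z_j,\varepsilon}$.

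The delicate step is uniform continuity of $\mathcal{B}^{-1}$, and this is where I expect the main obstacle: $\varrho$ is only left-invariant, whereas $\mathcal{B}^{-1}(g)=T_g$ is built from $g$ by \emph{right} composition, which $\varrho$ does not control, so one cannot transfer an estimate on $\varrho(g,h)$ directly to $\varrho(g\circ f,h\circ f)$. I would circumvent this by truncating the series. A sub-basic entourage of the pointwise uniformity on the target $Iso(Iso(\mathbb{Z}))$ has the form $U_{f,\varepsilon}=\{(T,S):\varrho(T(f),S(f))<\varepsilon\}$, so one must make $\varrho(T_g(f),T_h(f))=\varrho(g\circ f,h\circ f)=\sum_i 3^{-i}d(g(f(z_i)),h(f(z_i)))<\varepsilon$. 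Choose $N$ with $\sum_{i>N}3^{-i}=3^{-N}/2<\varepsilon$; since $d\le 1$, the tail already contributes less than $\varepsilon$, so it suffices to kill the first $N$ terms. As $f$ is a bijection of $\mathbb{Z}$, the set $\{f(z_1),\dots,f(z_N)\}$ is a finite subset of $\mathbb{Z}$, so $V=\{(g,h):g(f(z_i))=h(f(z_i))\text{ for }1\le i\le N\}$ is a basic entourage of the pointwise uniformity on the source $Iso(\mathbb{Z})$, and $(g,h)\in V$ forces the first $N$ terms to vanish. Thus $(g,h)\in V$ implies $\varrho(g\circ f,h\circ f)<\varepsilon$, i.e. $(T_g,T_h)\in U_{f,\varepsilon}$, proving $\mathcal{B}^{-1}$ uniformly continuous.

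Assembling the pieces, $\mathcal{B}$ is a bijective group homomorphism that is uniformly continuous with uniformly continuous inverse, hence a uniform group isomorphism for the pointwise uniformities. Every step except the inverse direction is a direct consequence of Proposition \ref{prop62}, Corollary \ref{cor63}, and the left-invariance of $\varrho$; the only genuine difficulty, the failure of right-invariance, is handled by the finite-truncation argument above.
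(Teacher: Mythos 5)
Your proof is correct, but it takes a genuinely different route from the paper's. The paper exploits the group structure throughout: by Proposition \ref{prop53} it equips $Iso(Iso(\mathbb{Z}))$ with a left-invariant metric $\sigma$ inducing the pointwise uniformity, reduces uniform equivalence to the statement that $\sigma(L_n,T_n)\to 0$ iff $\varrho(L_n(e),T_n(e))\to 0$, translates both conditions to the identity via left invariance (using $T^{-1}(e)=(T(e))^{-1}$ from Corollary \ref{cor63}), and disposes of the delicate direction --- the one you identify, where right composition by $f$ must be controlled --- by simply invoking continuity of the right translation $g\mapsto gf$ in the topological group $Iso(\mathbb{Z})$, which is automatic and needs no estimate. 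You instead work directly with sub-basic entourages of the two pointwise uniformities and handle that same direction by a finite truncation of the series defining $\varrho$, using that $f$ is a bijection so $\{f(z_1),\dots,f(z_N)\}$ is a finite set of test points. Your argument is more elementary and self-contained (it never needs $\sigma$ or Proposition \ref{prop53} on the big group), at the cost of being longer; the paper's is slicker but leans on the topological-group machinery. You also supply an explicit surjectivity argument via the left translations $T_g$, checking $T_g\in Iso(Iso(\mathbb{Z}))$ from the left invariance of $\varrho$ --- a point the paper passes over in silence when it asserts that Corollary \ref{cor63} makes $\mathcal{B}$ a group isomorphism --- so in that respect your write-up is actually more complete.
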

\begin{proof}
By Proposition \ref{prop53} we can equip $Iso(Iso(\mathbb{Z}))$ with a left invariant metric $\sigma$ such that the uniformity of pointwise convergence, the left
uniformity and the uniformity induced by $\sigma$ on $Iso(Iso(\mathbb{Z}))$ coincide. Let $L_{n},T_{n}\in Iso(Iso(\mathbb{Z}))$ such that $\sigma(L_{n},T_{n})\to 0$,
hence $\sigma(T_{n}^{-1}L_{n},I)\to 0$. Therefore $T_{n}^{-1}L_{n}\to I$ pointwise on $Iso(\mathbb{Z})$ so $T_{n}^{-1}L_{n}(e)\to e$, thus
$\varrho(L_{n}(e),T_{n}(e))\to 0$. For the converse, note that if $\varrho(T_{n}^{-1}L_{n}(e),e)\to 0$ then $\varrho(T_{n}^{-1}L_{n}(e)\circ f,f)\to 0$ for every
$f\in Iso(\mathbb{Z})$ since the map $Iso(\mathbb{Z})\to Iso(\mathbb{Z})$ with $g\mapsto gf$ is continuous. Hence $T_{n}^{-1}L_{n}\to I$ pointwise on
$Iso(\mathbb{Z})$. Corollary \ref{cor63} implies that $\mathcal{B}$ is also group isomorphism.
\end{proof}

\begin{proposition} \label{prop65}
The group $Iso(Iso(\mathbb{Z}))$ is Cauchy-indivisible and has no Weil completion.
\end{proposition}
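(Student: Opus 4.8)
The plan is to treat the two assertions separately, reducing each to the concrete description of $Iso(Iso(\mathbb{Z}))$ obtained in Propositions \ref{prop62}--\ref{prop64}.

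For Cauchy-indivisibility, by Proposition \ref{51} it suffices to argue with sequences. Let $\{T_{n}\}\subset Iso(Iso(\mathbb{Z}))$ satisfy $T_{n}\to\infty$, and suppose $\{T_{n}f\}$ is a Cauchy sequence in $(Iso(\mathbb{Z}),\varrho)$ for some $f\in Iso(\mathbb{Z})$; I must show $\{T_{n}g\}$ is Cauchy for every $g\in Iso(\mathbb{Z})$. Writing $a_{n}=T_{n}(e)$, Proposition \ref{prop62} gives $T_{n}f=a_{n}\circ f$. The key observation is that, since $\varrho$ induces the uniformity of pointwise convergence on the \emph{discrete} space $\mathbb{Z}$, a sequence in $Iso(\mathbb{Z})$ is $\varrho$-Cauchy exactly when, for each $z\in\mathbb{Z}$, the integer sequence of its values at $z$ is eventually constant. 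Hence $\{a_{n}\circ f\}$ is $\varrho$-Cauchy if and only if $\{a_{n}(fz)\}$ is eventually constant for every $z\in\mathbb{Z}$; and because $f$ is a bijection of $\mathbb{Z}$, the substitution $w=fz$ shows this is equivalent to $\{a_{n}(w)\}$ being eventually constant for every $w\in\mathbb{Z}$, a condition that makes no reference to $f$. Therefore $\{a_{n}\circ f\}$ being Cauchy for a single $f$ forces $\{a_{n}\circ g\}$ to be Cauchy for all $g$, which is precisely Cauchy-indivisibility.

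For the absence of a Weil completion I would invoke \cite[Ch. III, \S 3.4 Theorem 1]{bour1}: a group has a Weil completion with respect to its left uniformity if and only if inversion carries Cauchy sequences to Cauchy sequences. Recall from the beginning of this section the sequence $\{f_{n}\}\subset Iso(\mathbb{Z})$ that is Cauchy for the uniformity of pointwise convergence while $\{f_{n}^{-1}\}$ is not. Lift it through the isomorphism $\mathcal{B}$ of Proposition \ref{prop64} by choosing $F_{n}\in Iso(Iso(\mathbb{Z}))$ with $F_{n}(e)=f_{n}$. Since $\mathcal{B}$ is a uniform isomorphism and, by Proposition \ref{prop53}, the relevant uniformities coincide with the left ones, $\{F_{n}\}$ is Cauchy in $Iso(Iso(\mathbb{Z}))$; on the other hand Corollary \ref{cor63} gives $\mathcal{B}(F_{n}^{-1})=F_{n}^{-1}(e)=(F_{n}(e))^{-1}=f_{n}^{-1}$, so $\{F_{n}^{-1}\}$ is not Cauchy. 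Thus inversion in $Iso(Iso(\mathbb{Z}))$ fails to preserve Cauchy sequences and the group has no Weil completion. Equivalently, one notes that $\mathcal{B}$ transports the Weil-completion property and that $Iso(\mathbb{Z})$ has none.

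The substantive point is the observation in the second paragraph: precomposition by a bijection of the underlying discrete space merely permutes the coordinates and so cannot affect whether an orbit sequence is Cauchy. This is exactly what distinguishes the two levels and explains why $(Iso(Iso(\mathbb{Z})),Iso(\mathbb{Z}))$ is Cauchy-indivisible although $(Iso(\mathbb{Z}),\mathbb{Z})$ is not. The third paragraph is then routine bookkeeping through $\mathcal{B}$ and Corollary \ref{cor63}, presenting no real obstacle.
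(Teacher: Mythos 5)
Your proposal is correct and follows essentially the same route as the paper: for Cauchy-indivisibility you use Proposition \ref{prop62} to write $T_n(f)=T_n(e)\circ f$ and exploit the bijectivity of $f$ to see that the pointwise (eventually-constant) Cauchy condition is independent of the precomposed isometry, which is exactly the paper's substitution $T_n(f)f^{-1}gz=T_n(g)z$; for the Weil completion you transport the known failure for $Iso(\mathbb{Z})$ through the uniform isomorphism $\mathcal{B}$ of Proposition \ref{prop64}, just as the paper does (you merely spell out the lifted sequence $\{F_n\}$ and the use of Corollary \ref{cor63} explicitly).
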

\begin{proof}
Let us show firstly that $Iso(Iso(\mathbb{Z}))$ is Cauchy-indivisible. Let $\{ T_{n}\}\subset Iso(Iso(\mathbb{Z}))$ and $f\in Iso(\mathbb{Z})$ such that $\{
T_{n}(f)\}$ is a Cauchy sequence in $Iso(\mathbb{Z})$. Take some $g\in Iso(\mathbb{Z})$. Since $\{ T_{n}(f)\}$ is a Cauchy sequence in $Iso(\mathbb{Z})$ then, it is
easy to see that$\{ T_{n}(f)z\}$ is a Cauchy sequence for every $z\in\mathbb{Z}$. Equivalently, $\{ T_{n}(f)f^{-1}gz\}$ is a Cauchy sequence for every
$z\in\mathbb{Z}$. By Proposition \ref{prop62},
\[
T_{n}(f)f^{-1}gz=T_{n}(e)\circ ff^{-1}gz=T_{n}(e)\circ gz=T_{n}(g)z.
\]
Therefore $\{ T_{n}(g)\}$ is a Cauchy sequence in $Iso(Iso(\mathbb{Z}))$ for every $g\in Iso(\mathbb{Z})$, hence $Iso(Iso(\mathbb{Z}))$ is Cauchy-indivisible.

Since by the previous proposition the groups $Iso(Iso(\mathbb{Z}))$ and $Iso(\mathbb{Z})$ are uniformly isomorphic and the group $Iso(\mathbb{Z})$ has no Weil
completion then the same also holds for $Iso(Iso(\mathbb{Z}))$.
\end{proof}

\begin{proposition} \label{prop66}
The action $(Iso(Iso(\mathbb{Z})),Iso(\mathbb{Z}))$ is proper.
\end{proposition}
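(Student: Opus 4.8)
The plan is to use the characterization of properness for isometric actions recorded in \S2: since $Iso(Iso(\mathbb{Z}))$ acts on $(Iso(\mathbb{Z}),\varrho)$ by isometries, the action is proper if and only if the limit set $L(f)$ is empty for every $f\in Iso(\mathbb{Z})$. So I would fix $f\in Iso(\mathbb{Z})$ and argue by contradiction, assuming there is a net $\{T_i\}\subset Iso(Iso(\mathbb{Z}))$ with $T_i\to\infty$ and $T_i(f)\to h$ for some $h\in Iso(\mathbb{Z})$. The goal is to derive that $\{T_i\}$ in fact converges in $Iso(Iso(\mathbb{Z}))$, contradicting $T_i\to\infty$.

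The key structural input is Proposition \ref{prop62}, which gives $T_i(f)=T_i(e)\circ f$. Since the topology on $Iso(\mathbb{Z})$ is that of pointwise convergence, $T_i(f)\to h$ means $T_i(e)(f(z))\to h(z)$ for every $z\in\mathbb{Z}$. Substituting $w=f(z)$ and using that $f$ is a bijection of $\mathbb{Z}$, this reads $T_i(e)(w)\to h(f^{-1}(w))$ for every $w\in\mathbb{Z}$; that is, $T_i(e)\to h\circ f^{-1}$ pointwise. As $h\circ f^{-1}\in Iso(\mathbb{Z})$ and $\varrho$ metrizes the topology of pointwise convergence, this is genuine convergence in $(Iso(\mathbb{Z}),\varrho)$. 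This right-translation manoeuvre, converting convergence of $\{T_i(f)\}$ into convergence of $\{T_i(e)\}$, is the one step that needs care; everything else is bookkeeping.

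Then I would invoke Proposition \ref{prop64}: the map $\mathcal{B}(T)=T(e)$ is a uniform group isomorphism, hence a homeomorphism, so $\mathcal{B}^{-1}$ is continuous. Since $\mathcal{B}(T_i)=T_i(e)\to h\circ f^{-1}$, continuity of $\mathcal{B}^{-1}$ yields $T_i=\mathcal{B}^{-1}(T_i(e))\to \mathcal{B}^{-1}(h\circ f^{-1})\in Iso(Iso(\mathbb{Z}))$. Thus the net $\{T_i\}$ converges, so it possesses a convergent subnet, contradicting $T_i\to\infty$. Hence $L(f)=\emptyset$ for every $f\in Iso(\mathbb{Z})$, and the action is proper.

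Conceptually, the whole computation says that, under the identification $\mathcal{B}$, the action $(Iso(Iso(\mathbb{Z})),Iso(\mathbb{Z}))$ is nothing but the left regular action $g\cdot f=g\circ f$ of $Iso(\mathbb{Z})$ on itself, which is proper for the cheap reason that right translation by $f^{-1}$ is a homeomorphism: if $g_i\circ f$ converges then so does $g_i$. I expect no genuine obstacle beyond assembling Propositions \ref{prop62} and \ref{prop64}; the only point to state carefully is the passage from pointwise convergence of $\{T_i(e)\}$ to convergence in the metric $\varrho$, which is legitimate precisely because the limit $h\circ f^{-1}$ lies in $Iso(\mathbb{Z})$.
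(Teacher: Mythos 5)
Your proposal is correct and follows essentially the same route as the paper: both use Proposition \ref{prop62} to rewrite $T_i(f)=T_i(e)\circ f$, deduce $T_i(e)\to h\circ f^{-1}$ in $Iso(\mathbb{Z})$, and then lift this back to convergence of $\{T_i\}$ in $Iso(Iso(\mathbb{Z}))$ (the paper does this last step by also observing $(T_i(e))^{-1}\to f\circ h^{-1}$, while you invoke the uniform isomorphism $\mathcal{B}$ of Proposition \ref{prop64}, which amounts to the same thing).
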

\begin{proof}
Let $f,g\in Iso(\mathbb{Z})$ and $\{ T_{n}\}\subset Iso(Iso(\mathbb{Z}))$ be a sequence such that $T_{n}(f)\to g$. Hence, by Proposition \ref{prop62}, $T_{n}(e)\circ
f\to g$ thus $T_{n}(e)\to gf^{-1}$. Therefore $\{ T_{n}(h)\}$ converges for every $h\in Iso(\mathbb{Z})$. Since $(T_{n}(e))^{-1}\to fg^{-1}$ it is easy to verify that
$\{ T_{n}\}$ converges in $Iso(Iso(\mathbb{Z}))$ hence the action $(Iso(Iso(\mathbb{Z})),Iso(\mathbb{Z}))$ is proper.
\end{proof}

\begin{remark} \label{rem67}
Notice that  $Iso(Iso(\mathbb{Z}))$ is not locally compact since has no Weil completion ($Iso(\mathbb{Z})$ is, of course, not locally compact).
\end{remark}

In the following proposition we give a precise description of the sets $\widehat{X}$, $X_p$, $X_l$ and $E$ for the action $(Iso(Iso(\mathbb{Z})),Iso(\mathbb{Z}))$.

\begin{proposition} \label{prop68}
The following holds.
\begin{enumerate}
\item[(i)] The pointwise closure of $Iso(\mathbb{Z})$ in the set of all selfmaps of $\mathbb{Z}$  consists of all the injective selfmaps of $\mathbb{Z}$ and coincides
with the completion of $(Iso(\mathbb{Z}),\varrho)$.

\item[(ii)] The set $X_{p}$ is empty and the set $X_l$ consists of all the injective selfmaps of  $\mathbb{Z}$ which are not surjective. Moreover, the completion of
$(Iso(\mathbb{Z}),\varrho)$ coincides with the set $X\cup X_{l}$.

\item[(iii)] The Ellis semigroup $E$ is uniformly homeomorphic to  the semigroup of all injective selfmaps of $\mathbb{Z}$.

\item[(iv)] For every non-surjective map $f$ in the completion of $(Iso(\mathbb{Z}),\varrho)$ (that is $f$ does not belong to $Iso(\mathbb{Z})$) there exists
a sequence $\{T_n\}\subset E$ with $T_n\to\infty$ in $E$ and $T_n(f)\to f$. Hence, $E$ does not act properly on the completion of $(Iso(\mathbb{Z}),\varrho)$.
\end{enumerate}
\end{proposition}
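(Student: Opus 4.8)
The plan is to make everything explicit through the description of $Iso(Iso(\mathbb{Z}))$ obtained in Propositions \ref{prop62} and \ref{prop64}. For (i), I would first use Proposition \ref{prop53}: the $\varrho$-uniformity is that of pointwise convergence, so a sequence in $Iso(\mathbb{Z})$ is $\varrho$-Cauchy iff it is pointwise Cauchy, and in the discrete space $(\mathbb{Z},d)$ this means eventually constant at each point; hence every Cauchy sequence converges pointwise to some selfmap $f$ of $\mathbb{Z}$. A pointwise limit $f$ of bijections $a_n$ is injective, since for $z\neq w$ one has $a_n(z)=f(z)$ and $a_n(w)=f(w)$ for large $n$, whence $f(z)\neq f(w)$. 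Conversely every injective selfmap is a pointwise limit of bijections, because the restriction of $f$ to any finite set $\{z_1,\dots,z_N\}$ is a finite partial injection of $\mathbb{Z}$ and so extends to a bijection of $\mathbb{Z}$; letting $N\to\infty$ gives bijections converging pointwise to $f$. Thus the pointwise closure of $Iso(\mathbb{Z})$ is exactly the set of injective selfmaps and coincides with the completion.

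For (ii) the key tool is the dictionary $T\leftrightarrow a:=T(e)$ of Propositions \ref{prop62}--\ref{prop64}: $T\mapsto T(e)$ is a uniform group isomorphism $Iso(Iso(\mathbb{Z}))\to Iso(\mathbb{Z})$, with $T(f)=T(e)\circ f$ and $T^{-1}(e)=(T(e))^{-1}$ (Corollary \ref{cor63}). Since the basepoints $f$ in the definition of $X_l,X_p$ lie in $X=Iso(\mathbb{Z})$ and are therefore \emph{bijections}, $\{T_n(f)\}=\{a_n\circ f\}$ is Cauchy iff $\{a_n\}$ is pointwise Cauchy, $\{T_n^{-1}(f)\}$ is Cauchy iff $\{a_n^{-1}\}$ is, and $T_n\to\infty$ iff $a_n\to\infty$. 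For $X_p$ I take such $a_n\to\infty$ with $a_n\to a$ and $a_n^{-1}\to b$ pointwise (both injective by (i)) and observe that $a_n^{-1}(w)=b(w)$ eventually gives $a_n(b(w))=w$ eventually, hence $a(b(w))=w$; so $a$ is surjective, thus $a\in Iso(\mathbb{Z})$, and then $a_n\to a$ in $Iso(\mathbb{Z})$ contradicts $a_n\to\infty$. Hence $X_p=\emptyset$ (which re-proves via Proposition \ref{prop510} that $E$ is not a group). For $X_l$ the limit point is $y=a\circ f$ with $a=\lim a_n$ injective; $a$ cannot be surjective, else $a_n\to a\in Iso(\mathbb{Z})$ again contradicts divergence, so $y$ is injective but not surjective. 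Conversely any injective non-surjective $h$ is a pointwise limit of bijections (part (i)), which then necessarily diverge, so with $f=e$ we get $h\in X_l$. Therefore $X_l$ is exactly the injective non-surjective selfmaps, and $X\cup X_l$ is the whole set of injective selfmaps, i.e. the completion.

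For (iii) I make explicit the map $\Theta(\phi)\colon h\mapsto\phi\circ h$ attached to an injective selfmap $\phi$. Each lift acts by $\widehat T(h)=T(e)\circ h$, and if $g_n\to\infty$ with $\widehat{g_n}\to F$ in $E$, then evaluating at $e$ gives $g_n(e)\to F(e)=:\phi$ pointwise and $F=\Theta(\phi)$; combined with Proposition \ref{prop58}(i) this identifies $E=\widehat{Iso(Iso(\mathbb{Z}))}\cup H$ with $\{\Theta(\phi):\phi\text{ injective}\}$. The assignment $\phi\mapsto\Theta(\phi)$ is a semigroup isomorphism, since $\Theta(\phi)\circ\Theta(\psi)=\Theta(\phi\circ\psi)$; its inverse is evaluation at the identity $e\in\widehat X$, which is uniformly continuous, and $\Theta$ is uniformly continuous via the base-$3$ estimate behind Lemma \ref{lem61} (agreement of $\phi,\psi$ on $\{z_1,\dots,z_N\}$ is equivalent to $\varrho(\phi,\psi)<3^{-N}$, and this controls $\widehat d(\phi\circ h_j,\psi\circ h_j)$ on any finite family of $h_j$). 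Since each $\Theta(\phi)$ preserves $\widehat d$ (an injective map preserves the discrete metric $d$), $E$ acts by isometries on $\widehat X$.

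For (iv), given a non-surjective $f$ in the completion I would choose $c\in\mathbb{Z}\setminus f(\mathbb{Z})$ and distinct integers $w_n\to\infty$ with $w_n\neq c$, and let $\phi_n$ be the transposition interchanging $c$ and $w_n$, so that $T_n:=\Theta(\phi_n)\in E$. Because $f(z)\neq c$ for all $z$ and, for each fixed $z$, $f(z)\neq w_n$ eventually, we get $\phi_n\circ f\to f$ pointwise, i.e. $T_n(f)\to f$; on the other hand $\phi_n(c)=w_n$ takes infinitely many distinct values, so no subsequence of $\{\phi_n\}$ is pointwise Cauchy and $T_n\to\infty$ in $E$. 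Thus $f\in L(f)$ for the isometric action $(E,\widehat X)$, the limit set is non-empty, and by the criterion of \S2 the action is not proper. I expect the only genuinely delicate point to be this construction: the naive attempt to take $\phi_n$ fixing $f(\mathbb{Z})$ pointwise while diverging fails precisely when $\mathbb{Z}\setminus f(\mathbb{Z})$ is finite (then any injective map fixing a cofinite set is forced to permute a finite set and cannot diverge), and the \emph{moving} transposition $(c\ w_n)$ is exactly what circumvents this, tending to the identity at each fixed point of $f(\mathbb{Z})$ while escaping to infinity at the single point $c$.
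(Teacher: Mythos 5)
Your proposal is correct and follows essentially the same route as the paper: everything is reduced to the dictionary $T\mapsto T(e)$ of Propositions \ref{prop62}--\ref{prop64}, injective selfmaps are realized as pointwise limits of bijections by extending finite partial injections, $X_p=\emptyset$ comes from the simultaneous convergence of $\{a_n\}$ and $\{a_n^{-1}\}$ forcing the limit to be a bijection, and (iv) is proved by a divergent sequence of bijections that converges to the identity on $f(\mathbb{Z})$ while escaping at a point of the complement. Your moving transpositions $(c\ w_n)$ in (iv) are a harmless variant of the paper's permutations of increasing finite sets $A_n$, and your direct contradiction in (ii) (pointwise convergence to a bijection already contradicts $a_n\to\infty$) bypasses the paper's appeal to Proposition \ref{prop66} without changing the substance.
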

\begin{proof}
(i) Consider an injective map $f:\mathbb{Z}\to\mathbb{Z}$. There is a strictly increasing sequence of sets $A_n\subset\mathbb{Z}$  (with respect to the inclusion)
such that each $A_n$ contains finitely many points and contains also the sets $[-n,n]$ and $f([-n,n])$, for every $n\in\mathbb{N}$. Define $g_n\in Iso(\mathbb{Z})$
such that the restriction of $g_n$ on $A_n$ is a permutation which coincides with $f$ on the interval $[-n,n]$ and $g_n$ is the identity on the complement of $A_n$.
Hence, $g_n\to f$ pointwise.

(ii) Assume that $X_p\neq\emptyset$. Then there exist a sequence $\{ T_{n}\}\subset Iso(Iso(\mathbb{Z}))$ and a map $f\in Iso(\mathbb{Z})$ such that $\{ T_{n}(f)\}$
and $\{ T_{n}^{-1}(f)\}$ are Cauchy sequences in $Iso(\mathbb{Z})$ and $T_n\to\infty$. The Cauchy-indivisibility of the action
$(Iso(Iso(\mathbb{Z})),Iso(\mathbb{Z}))$ (cf. Proposition \ref{prop65}) implies that $\{ T_{n}(e)\}$ and  $\{ T_{n}^{-1}(e)\}$ are Cauchy sequences in
$Iso(\mathbb{Z})$, where $e$ denote the unit element of $Iso(\mathbb{Z})$. Hence, for every $z\in\mathbb{Z}$ the sequences $\{ T_n(e)(z)\}$ and $\{
T_{n}^{-1}(e)(z)=(T_{n}(e))^{-1}(z)\}$ are Cauchy in $\mathbb{Z}$ (cf. Corollary \ref{cor63}). Thus, for every $z\in\mathbb{Z}$ the sequences $\{ T_n(e)(z)\}$ and $\{
(T_{n}(e))^{-1}(z)\}$ are eventually constant. So, there exist injective selfmaps $g,h$ of $\mathbb{Z}$ such that $T_n(e)\to g$ and $(T_{n}(e))^{-1}\to h$. Obviously,
$h=g^{-1}$. This shows that $g\in Iso(\mathbb{Z})$.  The properness of the action $(Iso(Iso(\mathbb{Z})),Iso(\mathbb{Z}))$ (cf. Proposition \ref{prop66}) implies that
the sequence $\{T_n\}$ has a convergent subsequence in $Iso(Iso(\mathbb{Z}))$ (actually, by the proof of Proposition \ref{prop66}, the whole sequence $\{T_n\}$
converges) which is a contradiction. Therefore $X_p=\emptyset$.

By item (i) it follows that $X_l$ is contained in the semigroup of all the injective selfmaps of $\mathbb{Z}$ which are not surjective and by the same proof it
follows that if $f$ is a selfmap of $\mathbb{Z}$ which is not surjective then there exists a sequence $\{g_n\}\subset Iso(\mathbb{Z})$ such that $g_n\to f$ pointwise
on $\mathbb{Z}$. Set $T_{n}\in Iso(Iso(\mathbb{Z}))$ with $T_{n}(e)=g_{n}$ (that is $T_n(h)=T_n(e)\circ h=g_nh$ for every $h\in Iso(\mathbb{Z})$). Then
$T_n\to\infty$, $\{T_n(e)\}$ is a Cauchy sequence in $Iso(\mathbb{Z})$ and $T_n(e)\to f$, hence $f\in X_l$.

By item (i)  a map in the completion of $(Iso(\mathbb{Z}),\varrho)$ is an injection $\mathbb{Z}\to\mathbb{Z}$. If this map is also a surjection  then it belongs to
$X=Iso(\mathbb{Z})$ and if it is not a surjection then it belongs to $X_l$. Hence, the completion of $(Iso(\mathbb{Z}),\varrho)$ coincides with the set $X\cup X_{l}$.

(iii) It follows easily by Proposition \ref{prop64}.

(iv) Let $f$ be in the completion of $(Iso(\mathbb{Z}),\varrho)$ but does not belong to $Iso(\mathbb{Z})$. Hence, $f$ is not surjective. Take a point $z\notin
f(\mathbb{Z})$. There is a strictly increasing sequence of sets $A_n\subset\mathbb{Z}$  such that each $A_n$ contains finitely many points and contains strictly the
sets $\{ z\}$, $[-n,n]$ and $f([-n,n])$, for every $n\in\mathbb{N}$. Define $g_n\in Iso(\mathbb{Z})$ such that the restriction of $g_n$ on $A_n$ is a permutation with
the property $g_n$ is the identity on the set $f([-n,n])$, $g_nz>n$ and $g_n$ is the identity on the complement of $A_n$. Hence, $g_n\circ f\to f$ pointwise and since
$f$ is not surjective then $g_n\to\infty$ in $Iso(\mathbb{Z})$. Set $T_{n}\in Iso(Iso(\mathbb{Z}))$, as above, with $T_{n}(e)=g_{n}$. Then $T_n\to\infty$ in $E$
because $T_n(e)=g_n\to\infty$ in $Iso(\mathbb{Z})$ and $T_n(f)=g_n\circ f \to f$.
\end{proof}

\section{Sections, Borel Sections, Fundamental Sets and Cauchy-indivisibility}
As it is indicated in the introduction a section of an action $(G,X)$ is a subset of $X$ which contains only one point from each orbit. If a section is a Borel subset
of $X$ it called a Borel section. Concerning the existence of Borel sections, if $(Y, d)$ is a separable metric space and $\mathcal{R}$ is an equivalence relation on
$Y$ such that the $\mathcal{R}$-saturation of each open set is Borel, then there is a Borel set $S$ whose intersection with each $\mathcal{R}$-equivalence class which
is complete with respect to $d$ is nonempty, and whose intersection with each $\mathcal{R}$-equivalence class is at most one point; cf. \cite[Lemma 2]{kallman}. The
problem of the existence of a Borel section for a continuous Polish action is of remarkable significance because the existence of a Borel section is equivalent to
many interesting facts, like that the underlying space has only trivial ergodic measures, the orbit space has a standard Borel structure and it has no non-trivial
atoms. Recall that an action $(G,X)$ is called \textit{Polish} if both $G$ and $X$ are Polish spaces, i.e. they are separable and metrizable by a complete metric.
Keeping the previous in mind we have the following:

\begin{proposition} \label{prop71}
If the Ellis semigroup $E$ is a group then the action $(E,X\cup X_{l})$ has a Borel section.
\end{proposition}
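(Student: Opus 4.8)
The plan is to apply Kallman's transversal lemma \cite[Lemma 2]{kallman} to the separable metric space $Y:=X\cup X_{l}$ (which is separable, being a subspace of the separable space $\widehat{X}$) together with the orbit equivalence relation $\mathcal{R}$ of the action $(E,X\cup X_{l})$. Two hypotheses must be verified: that the $\mathcal{R}$-saturation of every open set is Borel, and that the set produced by the lemma actually meets every orbit. The first is immediate, since $E$ is a group: each $f\in E$ restricts to a homeomorphism of $X\cup X_{l}$ onto itself by Lemma~\ref{lem511} together with the invertibility of $f$, so for open $U$ the saturation $\bigcup_{f\in E}fU$ is a union of open sets, hence open and a fortiori Borel. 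The conclusion of \cite[Lemma 2]{kallman} then yields a Borel set $S$ meeting each \emph{complete} orbit in a nonempty set and each orbit in at most one point; thus $S$ will be a Borel section precisely when every orbit of $(E,X\cup X_{l})$ is complete with respect to $\widehat{d}$. Establishing this completeness is the heart of the matter and the step I expect to be the main obstacle.

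To prove completeness I would first reduce to orbits through points of $X$. Since $E$ is a group, Proposition~\ref{prop510} gives $X_{l}=X_{p}$; and if $y\in X_{p}$, say $y=[g_{k}x]$ with $g_{k}\to\infty$ in $Iso(X)$ and both $\{g_{k}x\}$, $\{g_{k}^{-1}x\}$ Cauchy, then Corollary~\ref{cor56} furnishes $h\in Iso(\widehat{X})\cap H\subset E$ with $\widehat{g_{n}}\to h$ pointwise and $hx=[g_{k}x]=y$. As $E$ is a group, $x=h^{-1}y\in Ey$, so the orbit $Ey=Ex$ passes through the point $x\in X$. Hence every orbit of $(E,X\cup X_{l})$ has the form $Ex$ with $x\in X$, and it suffices to show each such orbit is closed in the complete space $\widehat{X}$.

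For this remaining claim, suppose $e_{n}\in E$ and $e_{n}x\to w$ in $\widehat{X}$ with $x\in X$; I want $w\in Ex$. If $\{e_{n}\}$ has a subsequence converging to some $e\in E$ we are done, since then $w=ex$. Otherwise $e_{n}\to\infty$ in $E$, and I would derive a contradiction. Using that $\widehat{Iso(X)}$ is dense in $E$ (Proposition~\ref{prop58}, Corollary~\ref{cor518}), choose $g_{n}\in Iso(X)$ with $\delta(\widehat{g_{n}},e_{n})\to 0$ rapidly; the estimate on $\delta$ exactly as in the proof of Theorem~\ref{th512} (inequality \ref{eq51}) gives $\widehat{g_{n}}x\to w$, and if $\{g_{n}\}$ had a convergent subsequence then so would $\{e_{n}\}$ (via Proposition~\ref{prop57}), so in fact $g_{n}\to\infty$ in $Iso(X)$. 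Then $\{g_{n}x\}$ is Cauchy with $x\in X$, whence $w=[g_{k}x]\in X_{l}\subset X\cup X_{l}$ by the definition of $X_{l}$. But $e_{n}\to\infty$ in $E$ and $e_{n}x\to w\in X\cup X_{l}$ would place $w$ in the limit set $L(x)$ of $(E,X\cup X_{l})$, contradicting its properness (Theorem~\ref{th512} together with $X_{l}=X_{p}$, equivalently Corollary~\ref{cor518}). Hence $\{e_{n}\}$ must have a convergent subsequence and $w\in Ex$, so every orbit is closed in $\widehat{X}$ and therefore complete. With all orbits complete, the set $S$ delivered by \cite[Lemma 2]{kallman} meets each orbit in exactly one point and is the desired Borel section.
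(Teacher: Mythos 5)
Your proof is correct and follows essentially the same route as the paper: both arguments use $E$ being a group to get $X_l=X_p$ and the properness of $\omega$ on $X\cup X_l$ (Propositions \ref{prop510} and \ref{prop516}), deduce that every orbit is closed in $\widehat{X}$ and hence complete, and then invoke \cite[Lemma 2]{kallman}. You merely spell out details the paper leaves implicit, namely the verification that $\mathcal{R}$-saturations of open sets are Borel and the closedness of orbits.
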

\begin{proof}
Assume that the Ellis semigroup $E$ is a group. Since by Proposition \ref{prop510} we have $X_l=X_p$ and by Proposition \ref{prop516} the map $\omega :E\times (X\cup
X_{l})\to (X\cup X_{l})\times \widetilde{X}$ is proper then each orbit $Ex$, $x\in X\cup X_{l}$ is closed in $\widetilde{X}$. Hence, by \cite[Lemma 2]{kallman} there
exists a Borel set $S\subset \widetilde{X}$ such that $S\cap (X\cup X_{l})$ is a Borel section (with respect to the relative topology of $X\cup X_{l}$) for the action
$(E,X\cup X_{l})$.
\end{proof}

A very useful notion in the theory of proper actions on locally compact spaces with paracompact orbit space is the notion of a fundamental set.

Let $G$ be a topological group which acts continuously on a topological space $X$ and $A,B\subset X$. Let us call the set $G_{AB}:=\{g\in G\,:\, gA\cap B\neq\emptyset
\}$ the \textit{transporter} from $A$ to $B$.

\begin{definition} \label{def74}
A subset $F$ of $X$ is called a fundamental set for the action  $(G,X)$ if the following holds.
\begin{itemize}
\item[(i)] $GF=X$.
\item[(ii)] For every $x\in X$ there exists a neighborhood $V\subset X$ of $x$ such that the transporter $G_{VF}$ of $V$ to $F$ has compact closure in $G$.
\end{itemize}
\end{definition}
For locally compact spaces we can replace condition (ii) with the following equivalent condition.
\medskip

(iia)  The transporter $G_{KF}$ from $K$ to $F$ has compact closure in $G$ for every non-empty compact subset $K$ of $X$.

\medskip

Note that the existence of a fundamental set implies that the action group $G$ is locally compact and the action $(G,X)$ is proper.

The notion of a fundamental set is relative to the notion of a section but it is different in general, in the sense that there are cases where a section is a
fundamental set, a fundamental set fails to be a section and cases where a section fails to be a fundamental set. A section may not be Borel or even if it is Borel
may not to be contained in any fundamental set as the following example shows.

\begin{example} \label{ex76}
The action $(\mathbb{Z},\mathbb{R})$ with $(z,r)\mapsto r+z$, $z\in\mathbb{Z}, r\in\mathbb{R}$ is proper and it has a Borel section which is not contained in any
fundamental set. Indeed, it is easy to see that the set $S:=([0,1)\setminus \bigcup_{n\in\mathbb{N}} \{ \frac{1}{n}\})\cup \bigcup_{n\in\mathbb{N}} \{ n+\frac{1}{n}
\}$ is a section because the interval $[0,1)$ is a section (and a fundamental set) for the action $(\mathbb{Z},\mathbb{R})$. Take an open ball $B$ centered at $0$
with radius $\varepsilon>0$. Then there exists $n_0\in\mathbb{N}$ such that $\frac{1}{n}<\varepsilon$ for every $n\geq n_0$. Let $A$ be a  subset of $\mathbb{R}$ that
contains $S$. Hence $\{ n\,|\, n\geq n_0 \}$ is a subset of the transporter $\mathbb{Z}_{BS}\subset \mathbb{Z}_{BA}$, so $A$ can not be a fundamental set.

It is also possible to construct a section which is not Borel. Take a set $D\subset [0,1)$ which is not a Borel set and consider the set $S_1:=D\cup \{ x+2\,
|\,x\in\mathbb{R}\setminus D\}$. Obviously $S_1$ is a section which is not a Borel subset of the reals.
\end{example}

Nevertheless sections, Borel section and fundamental sets have a very strong connection as the following theorem shows.

\begin{theorem} \label{sectfund}
Let $G$ be a locally compact group which acts properly on a locally compact space $X$, and suppose that the orbit space $G\backslash X$ is paracompact. Let $S$ be a
section for the action $(G,X)$. Then
\begin{itemize}
\item[(i)] For every open neighborhood $U$ of $S$ we can construct a closed fundamental set $F_c$ and an open fundamental set $F_o$ such that $F_c\subset F_o\subset U$.
\item[(ii)] If, in addition, $(X,d)$ is a separable metric space, in which case, by Theorem \ref{th33} the action $(G,X)$ is Cauchy-indivisible,  then there exists
a Borel section $S_B$, which is also a fundamental set, such that $S_B\subset F_c\subset F_o\subset U$.
\end{itemize}
\end{theorem}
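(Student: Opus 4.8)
The plan is to build the two fundamental sets by hand out of relatively compact neighbourhoods centred on the given section $S$, and then to carve a Borel section out of the \emph{closed} fundamental set by means of \cite[Lemma 2]{kallman}. Throughout write $Y:=G\backslash X$ and let $\pi\colon X\to Y$ be the orbit map; it is continuous, open (since $\pi^{-1}(\pi(V))=GV$ is open for open $V$), and, the action being proper, $Y$ is locally compact Hausdorff. The single fact about properness I will use repeatedly is that for compact $K,L\subset X$ the transporter $G_{KL}=\{g:gK\cap L\neq\emptyset\}$ is compact: it is the image in $G$ of $\{(g,x):x\in K,\ gx\in L\}$, which is compact because the action is proper (equivalently, the map $(g,x)\mapsto(x,gx)$ is proper).

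For (i), for each $y\in Y$ let $s(y)\in S$ be the unique representative of the orbit $\pi^{-1}(y)$; since $S\subset U$ and $X$ is locally compact I choose an open $V_y\ni s(y)$ with $\overline{V_y}$ compact and $\overline{V_y}\subset U$. The sets $\pi(V_y)$ form an open cover of $Y$; using paracompactness I pass to a locally finite open refinement $\{O_\alpha\}$ with $O_\alpha\subset\pi(V_{y(\alpha)})$, and apply the shrinking lemma (valid since paracompact Hausdorff spaces are normal) to obtain nested covers $\overline{O''_\alpha}\subset O'_\alpha$, $\overline{O'_\alpha}\subset O_\alpha$. Put $W_\alpha:=V_{y(\alpha)}\cap\pi^{-1}(O_\alpha)$; then $\pi(W_\alpha)=O_\alpha$, $\overline{W_\alpha}\subset\overline{V_{y(\alpha)}}\subset U$ is compact, and $\{\overline{W_\alpha}\}$ is locally finite in $X$ because $\pi(\overline{W_\alpha})\subset\overline{O_\alpha}$ and $\{\overline{O_\alpha}\}$ is locally finite. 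Set $F_o:=\bigcup_\alpha W_\alpha\subset U$. Then $GF_o=\bigcup_\alpha\pi^{-1}(O_\alpha)=X$, which is condition (i) of Definition~\ref{def74}. For condition (ii), given $x$ with $\pi(x)=y$ I pick a relatively compact $N\ni y$ meeting only finitely many $\overline{O_\alpha}$ and a relatively compact open $V\ni x$ with $\pi(\overline V)\subset N$; then $G_{VW_\alpha}\subset G_{\overline V\,\overline{W_\alpha}}$ is nonempty only for the finitely many $\alpha$ with $N\cap\overline{O_\alpha}\neq\emptyset$, so $G_{VF_o}=\bigcup_\alpha G_{VW_\alpha}$ is relatively compact by the transporter fact above. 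Shrinking each $W_\alpha$ to an open $\tilde W_\alpha$ with $\overline{\tilde W_\alpha}\subset W_\alpha$ and $\{\pi(\tilde W_\alpha)\}$ still covering $Y$, the set $F_c:=\bigcup_\alpha\overline{\tilde W_\alpha}$ is, as a locally finite union of compacta, closed; the same two computations make it a fundamental set, and $F_c\subset F_o\subset U$.

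For (ii) the governing observation is that \emph{any subset of a fundamental set that still meets every orbit is itself a fundamental set}: condition (i) is the surjectivity $GS_B=X$, and condition (ii) only weakens under passage to a subset since $G_{VS_B}\subset G_{VF_c}$. Hence it suffices to produce a \emph{Borel section $S_B\subset F_c$}. Now separability of $X$ makes $Y$ second countable, so the locally finite cover $\{O_\alpha\}$ is countable and $F_c$ is $\sigma$-compact. Two further points then let me invoke \cite[Lemma 2]{kallman} for the orbit equivalence relation on the separable metric space $F_c$. First, each class $Gx\cap F_c$ is \emph{compact}, hence complete: local finiteness is point-finiteness, so $y=\pi(x)$ lies in only finitely many $\overline{O_\alpha}$, whence $Gx\cap F_c$ is a closed subset of a finite union of the compacta $\overline{\tilde W_\alpha}$. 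Second, the saturation of a relatively open $V=F_c\cap\tilde V$ is $F_c\cap\pi^{-1}(\pi(V))$, and $\pi(V)=\bigcup_\alpha\pi(\overline{\tilde W_\alpha}\cap\tilde V)$ is a countable union of continuous images of $\sigma$-compact sets, hence $F_\sigma$ and Borel, so the saturation is Borel in $F_c$. Kallman's lemma then yields a Borel set meeting each complete class, i.e. every class, in exactly one point; as every orbit meets $F_c$ this is a genuine Borel section $S_B\subset F_c$, automatically a fundamental set by the observation above, with $S_B\subset F_c\subset F_o\subset U$.

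I expect the real obstacle to be part (ii): unlike the classical locally compact picture, $X$ need not be complete, so neither hypothesis of Kallman's lemma is for free. The crux is to work with the \emph{closed} set $F_c$ rather than the open $F_o$, for this turns each orbit trace into a compact (hence complete) set via point-finiteness of the cover, while second countability from separability simultaneously makes the cover countable — so that $F_c$ is $\sigma$-compact and the delicate continuous-image step for the Borel saturations stays inside the class of $F_\sigma$ sets — and guarantees that the resulting transversal is a full section rather than merely a partial one.
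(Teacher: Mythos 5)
Your proof is correct and follows the same two-step skeleton as the paper: part (i) is Koszul's classical construction of fundamental sets from a locally finite refinement of a cover of the orbit space, and part (ii) extracts a Borel section from the closed fundamental set $F_c$ via \cite[Lemma 2]{kallman}. The one place in (i) where you assert more than you prove is the shrinking ``$\tilde W_\alpha$ with $\overline{\tilde W_\alpha}\subset W_\alpha$ and $\{\pi(\tilde W_\alpha)\}$ still covering $Y$'': this is true, but it is exactly where the paper (following Koszul) inserts a compact-lifting step --- since $\pi|_{W_\alpha}\colon W_\alpha\to O_\alpha$ is an open continuous surjection from a locally compact space, the compact set $\overline{O'_\alpha}\subset O_\alpha$ lifts to a compact $K_\alpha\subset W_\alpha$ with $\pi(K_\alpha)=\overline{O'_\alpha}$, and one then fattens $K_\alpha$ to an open $\tilde W_\alpha$ with compact closure inside $W_\alpha$. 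You should spell this out, as it is the only non-formal point of the construction and you already prepared the cover $\{O'_\alpha\}$ for precisely this purpose.

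In part (ii) your verification of Kallman's hypotheses diverges from the paper's, and in both places your version is the more robust one. For completeness of the classes, the paper argues that $Gx\cap F_c$ is closed in $X$ and is ``thus complete,'' which is a non sequitur when $X$ itself is not complete; your observation that point-finiteness forces $Gx\cap F_c$ into a finite union of the compacta $\overline{\tilde W_\alpha}$, hence makes it compact, is the right argument. For Borelness of saturations, the paper claims $GU\cap F_c$ is open in $F_c$, which already fails for $\mathbb{Z}$ acting on $\mathbb{R}$ with $F_c=[0,1]$ and $U=(1/2,1]$ (the saturation picks up the isolated point $0$); your $F_\sigma$ computation via $\sigma$-compactness of $F_c$ --- which is where separability enters, to make the cover countable --- is what actually delivers the hypothesis. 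Your closing remarks, that any subset of a fundamental set meeting every orbit is again a fundamental set and that Kallman's lemma yields a full rather than partial section because every class is complete, supply justifications the paper leaves implicit. In short: same route, but your write-up of (ii) repairs two weak joints in the paper's own argument, at the cost of one elided lifting lemma in (i).
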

\begin{proof}
(i) Since $U$ is open it is a union of open balls, let say  $S_i$, $i\in I$. Let $p: X\to G\backslash X$ be the natural map $x\mapsto Gx$. Then $p(S_i)$, $i\in I$ is
an open covering of the locally compact and paracompact space $ G\backslash X$. Hence, there is a locally finite refinement $\{ W_j\}$, $j\in J$ which consists of
open subsets of $ G\backslash X$ with compact closures such that $W_j\subset p(S_{i_j})$, for some $i_j\in I$. Now we can follow the classical proof for the existence
of fundamental sets; cf. \cite[Lemma 2, p. 8]{koszul}. Let $\{ V_j\}$ be an open covering of $ G\backslash X$ such that $\overline{V_j}\subset W_j$ for every $j\in
J$. Fix an index $j\in J$ and consider the restriction of the natural map $p:X\to G\backslash X$ on the open ball $S_{i_j}$. Since $S_{i_j}$ is locally compact then
there exist an open set $U_{i_j}\subset S_{i_j}$ with compact closure and a compact set $K_{i_j}\subset U_{i_j}\subset S_{i_j}$ such that $p(U_{i_j})=W_j$ and
$p(K_{i_j})=\overline{V_j}$. Let $F_c:=\bigcup_j K_{i_j}$ and $F_o :=\bigcup_j U_{i_j}$. The family $\{ U_{i_j}\}_{j\in J}$ is locally finite in $X$ hence the set
$F_c$ is closed; cf. \cite[Ch. I, \S 1.5 Proposition 4]{bour1}. Moreover, $GF_c=X$. Take a point $x\in X$ and neighborhood $A$ of $x$ with compact closure. Since the
covering $\{ W_j\}_{j\in J}$ is locally finite, then the transporters $G_{AU_{i_j}}$ from $A$ to $U_{i_j}$ are non-empty for only finitely many $j\in J$. Since the
sets $A$ and $U_{i_j}$ have compact closure and the action $(G,X)$ is proper, then the transporter $G_{AF_o}$ of $A$ to $F_o$ has compact closure in $G$. Thus, $F_c$
and $F_o$ are fundamental sets and by construction  $F_c\subset F_o\subset U$.

(ii) Let $F_c$ a closed fundamental set for the action $(G,X)$ like in item (i). Define a relation $\mathcal{R}$ on $F_c$ with  $x\mathcal{R} y$, $x,y\in F_c$ if and
only if $y\in Gx$. We will find a Borel section for the closed fundamental set $F_c$ with respect to the previous natural relation on $F_c$ and then we will show that
it is, also, a Borel section for the action $(G,X)$. Obviously $\mathcal{R}$ is an equivalence relation on the separable metric space $(F_c,d)$. Since the action
$(G,X)$ is proper each orbit $Gx$ is closed in $X$, for every $x\in X$. The $\mathcal{R}$-equivalence class of a point $x\in F_c$ is $Gx\cap F_c$, hence it is a
closed subset of $X$, thus it is complete space with respect to the metric $d$. If $U$ is an open subset of $F_c$ with respect to the relative topology of $F_c$ then
the $\mathcal{R}$-saturation of $U$ is the set $GU\cap F_c$ which is open in $F_c$ hence it is a Borel set. Therefore we can apply \cite[Lemma 2]{kallman} to find a
Borel section $S_B\subset F_c$ for the equivalence relation $\mathcal{R}$. Moreover, $S_B$ is a Borel section (and a fundamental set) for the action $(G,X)$, since it
is contained in the closed fundamental set $F_c$.
\end{proof}

\begin{remark} \label{rem78}
Note that the assumption that the orbit space $G\backslash X$ is paracompact is automatically satisfied for proper isometric actions. Actually they are metrizable by
the metric defined by
\[
\rho (Gx,Gy):=\inf \{d(gx,hy)\,|\,g,h\in G\}=\inf \{d(x,hy)\,|\,h\in G\}
\]
where $G=Iso(X)$ or $E$. So we can apply Theorem \ref{sectfund} in both cases.
\end{remark}

\begin{question} \label{que73}
As Theorem \ref{sectfund}(ii) indicates the notion of a Borel section is remarkably related to that of a fundamental set in the locally compact case and may be,
similarly, used for structural theorems. Note that the Borel section $S_B$, because of its construction, is a ``minimal" fundamental set for the action $(G,X)$, that
is for each point $x\in X$ the transporter $G_{\{ x\}S_B}=gG_x$ for some $g\in G$. So, it is interesting to ask whether the existing Borel section for the action
$(E,X\cup X_l)$ can be reduced to a Borel section for the initial action $(Iso(X),X)$.
\end{question}

\end{document}